\pgfplotsset{compat=1.12}
\newtheorem{theorem}{Theorem}[section]
\newtheorem{proposition}[theorem]{Proposition}
\newtheorem{lemma}[theorem]{Lemma}
\newtheorem{corollary}[theorem]{Corollary}
\theoremstyle{definition}
\theoremstyle{remark}
\newenvironment{proofof}[1]{\par
  \pushQED{\qed}%
  \normalfont \topsep6\p@\@plus6\p@\relax
  \trivlist
  \item[\hskip\labelsep
        \bfseries
    Proof of #1\@addpunct{.}]\ignorespaces
}{%
  \popQED\endtrivlist\@endpefalse
}
\let\Re\undefined
\DeclareMathOperator{\Sub}{Sub}
\DeclareMathOperator{\sing}{sing}
\DeclareMathOperator{\Re}{Re}
\DeclareMathOperator{\diag}{diag}
\DeclareMathOperator{\Gr}{Gr}
\DeclareMathOperator{\Ch}{Ch}
\DeclareMathOperator{\rank}{rank}
\DeclareMathOperator{\mrank}{\mu rank}
\DeclareMathOperator{\argmin}{argmin}
\DeclareMathOperator{\brank}{\overline{\rank}}
\DeclareMathOperator{\spn}{span}
\newcommand{\tp }{{\scriptscriptstyle\mathsf{T}}}
\newcommand{\KK}{\mathbb{K}}
\newcommand{\RR}{\mathbb{R}}
\newcommand{\CC}{\mathbb{C}}
\newcommand{\PP}{\mathbb{P}}
\newcommand{\blue}[1]{#1}
\begin{document}
\title{Complex best $r$-term approximations almost always exist in finite dimensions}
\author{Yang~Qi}
\address{Computational and Applied Mathematics Initiative, Department of Statistics, University of Chicago, Chicago, IL, 60637-1514.}
\email{yangqi@galton.uchicago.edu}
\author{Mateusz Micha\L ek}
\address{Max Planck Institute for Mathematics in the Sciences, 04103 Leipzig, Germany and Polish Academy of Sciences, 00656 Warsaw, Poland.}
\email{Mateusz.Michalek@mis.mpg.de}
\author{Lek-Heng~Lim}
\address{Computational and Applied Mathematics Initiative, Department of Statistics,
University of Chicago, Chicago, IL 60637-1514.}
\email[corresponding author]{lekheng@galton.uchicago.edu}

\begin{abstract}
We show that in finite-dimensional nonlinear approximations, the best $r$-term approximant of a function $f$  almost always exists over $\mathbb{C}$ but that the same is not true over $\mathbb{R}$, i.e., the infimum $\inf_{f_1,\dots,f_r \in Y} \lVert f - f_1 - \dots - f_r \rVert$ is almost always attainable by complex-valued functions $f_1,\dots, f_r$ in $Y$, a set of functions that have some desired structures. Our result extends to functions that possess special properties like symmetry or skew-symmetry under permutations of arguments. For the case where $Y$ is the set of separable functions, the problem becomes that of best rank-$r$ tensor approximations. We show that over $\mathbb{C}$, any tensor almost always has a unique best rank-$r$ approximation. This extends to other notions of tensor ranks such as symmetric rank and alternating rank, to best $r$-block-terms approximations, \blue{and to best approximations by tensor networks.} When applied to sparse-plus-low-rank approximations, we obtain that for any given $r$ and $k$, a general tensor has a unique best approximation by a sum of a rank-$r$ tensor and a $k$-sparse tensor with a fixed sparsity pattern; this arises in, for example, estimation of covariance matrices of a Gaussian hidden variable model with $k$ observed variables conditionally independent given $r$ hidden variables. \blue{The existential (but not the uniqueness) part of our result also applies to best approximations by a sum of a rank-$r$ tensor and a $k$-sparse tensor with no fixed sparsity pattern, as well as to tensor completion problems.}
\end{abstract}

\keywords{nonlinear approximations; best $k$-term approximations; separable approximations; sparse-plus-low-rank approximations; tensor ranks;  best rank-$r$ approximations; \blue{tensor completion; tensor networks}}
\subjclass[2010]{15A69, 
41A50, 
41A52, 
41A65, 
51M35}
\maketitle

\section{Introduction}\label{sec:intro}

There are numerous problems in scientific and engineering applications that may ultimately be put in the following  form: Given a real or complex-valued function $f : \Omega \to \KK$ (with $\KK = \mathbb{R}$ or $\mathbb{C}$ respectively), find a \emph{best approximation} of $f $ by a sum of functions $f_1,\dots, f_r$ with some special structure, i.e., 
\begin{equation}\label{eq:main}
\min_{f_1,\dots,f_r \in Y} \lVert f - (f_1 + \dots + f_r) \rVert,
\end{equation}
where $Y$ is a subset of functions possessing that special structure. The problem, called the \emph{best $r$-term approximation problem} or \emph{best rank-$r$ approximation problem} is ubiquitous by its generality and simplicity. A slight generalization involves
\begin{equation}\label{eq:join}
\min_{f_1,\dots,f_r \in Y_1; \; g_1,\dots, g_s \in Y_2} \lVert f - (f_1 + \dots + f_r + g_1 + \dots + g_s) \rVert,
\end{equation}
where $Y_1$ and $Y_2$ denote two subsets of functions each with a different structure (e.g., this arises in so-called sparse plus low-rank approximations). More generally, the approximation could involve even more subsets $Y_1,\dots,Y_k$ of functions, each capturing a different structure in the target function $f$.

In practice, if the problem is not already discrete, then it has to be discretized  for the purpose of computations. This usually involves discretizing $\Omega$ (e.g., sampling points, triangulation, mesh generation) or finding a finite-dimensional approximation of the function spaces (e.g., Galerkin method, quadrature) or both (e.g., collocation methods). The result of which is that we may in effect assume that $\Omega$ is a finite set or that the function space $L^2_\KK(\Omega)$ is a finite-dimensional vector space (as our notation implies, we assume that  the norm in \eqref{eq:main} is an $l^2$-norm). In which case, up to a choice of basis, $L^2_\KK(\Omega) \cong \KK^{n}$, where $n  = \dim L^2_\KK(\Omega)$ (or, if $\Omega$ is finite,  $n =  \# \Omega$).

The main issue with \eqref{eq:main} is that in many common scenarios, the approximation problem in \eqref{eq:main} does not have a solution: Let $f \in L^2_\KK(\Omega)$ and $Y \subseteq L^2_\KK(\Omega)$ be a closed subset (under the metric topology induced by $\lVert\,\cdot \,\rVert$), the infimum 
\[
\inf_{f_1,\dots,f_r \in Y} \lVert f - (f_1 + \dots + f_r) \rVert
\] 
is often not attainable when $r > 1$. The reason being that the set of $r$-term approximants
\[
\Sigma^{\circ}_r(Y) \coloneqq \{f \in L^2_\KK(\Omega) \mid f = f_1 + \dots + f_r \text{ for some } f_1, \dots, f_r \in Y\}
\]
is often not closed when $r > 1$.  Let $\Sigma_r(Y)$ denote the closure of $\Sigma^{\circ}_r(Y)$ under the metric topology. Each element in $\Sigma_r(Y)$  is a limit of a sequence of $r$-term approximants but may not itself be an $r$-term approximant. While
\[
\varphi_*  \in \argmin_{\varphi \in \Sigma_r(Y)} \|f - \varphi\|
\]
always exist, it would not in general be an $r$-term approximant, i.e., $\varphi_*$  may fail to be of the required form $f_1 + \dots + f_r$ with $f_i \in Y$ --- this happens when $\varphi_* \in  
\Sigma_2(Y) \setminus \Sigma_2^{\circ}(Y)$. The main result of this article is to  show that it makes a vast difference whether $\KK = \RR$ or $\KK = \CC$ --- for the latter, this failure almost never happens under some mild conditions.

We will elaborate this phenomenon more concretely by studying the case of separable approximations. In this case, $\Omega = \Omega_1 \times \dots \times \Omega_d$ and 
\begin{equation}\label{eq:separable}
Y = \{ \varphi \in L^2_\KK(\Omega) \mid \varphi = \varphi_1 \otimes \dots \otimes \varphi_d \; \text{where} \; \varphi_j \in L^2_\KK(\Omega_j) \}
\end{equation}
is the set of separable functions, i.e., functions $\varphi : \Omega_1 \times \dots \times \Omega_d \to \KK$ of the form
\[
\varphi(x_1,x_2,\dots, x_d) = \varphi_1(x_1) \varphi_2(x_2) \cdots \varphi_d(x_d), \qquad x_j \in \Omega_j, \; j=1,\dots, d.
\]
Take $d = 3$ for simplicity. When $\KK = \RR$, it has been shown \cite[Theorem~8.4]{deSilvaLim08}  that there is a nonempty open subset $\mathcal{O} \subseteq L^2_\RR(\Omega)$ such that for any $f \in \mathcal{O}$, the infimum
\begin{equation}\label{eq:rank2}
\inf_{\varphi_i , \psi_i \in L^2_\RR(\Omega_i)} \|f - \varphi_1 \otimes \varphi_2 \otimes \varphi_3 - \psi_1 \otimes \psi_2 \otimes \psi_3\|
\end{equation}
fails to be attainable. In particular the set of such failures, given that it contains an open set, must have positive volume, i.e., such failures occur with positive probability and cannot be ignored in practice. 
Given any $f \in L^2_\KK(\Omega)$, the \emph{rank} of $f$ is the  integer $r$ such that $f$ is a sum of $r$ separable functions, i.e.,
\begin{equation}\label{def:rank}
\rank(f)  = \min \biggl\{ r \biggm| \sum_{i = 1}^r \varphi_{1,i} \otimes \dots \otimes \varphi_{d,i} \biggr\},
\end{equation}
and so  $\Sigma^{\circ}_r(Y) = \{f \in L^2_\KK(\Omega) \mid \rank(f) \le r\}$.
Therefore the preceding discussion says that the set of $f$ that fails to have a best rank-two approximation has positive volume. In fact, there are more extreme examples \cite[Theorem~8.1]{deSilvaLim08} where \emph{every} $f \in L^2_\RR(\Omega)$ of rank $> 2$ fails to have a best rank-two approximation. Our main result in this article will show that such failures are rare, in fact, they almost never happen, when $\KK = \CC$. 

\blue{We will briefly review the example in \cite[Theorem~8.1]{deSilvaLim08} to give readers a better idea.
Let $\Omega_1 = \Omega_2 = \Omega_3 = \{0,1\}$. One can show that  real-valued functions $f \in L^2_\mathbb{R}(\Omega_1 \times \Omega_2 \times \Omega_3) =L^2_\mathbb{R}(\{0,1\}^3)$ with $\rank_\mathbb{R}(f) > 2$ must take either one of the two following forms:
\begin{align}
f &= f_1 \otimes f_0 \otimes f_0 + f_0 \otimes f_1 \otimes f_0 + f_0 \otimes f_0 \otimes f_1, \label{eq:rank2a}\\
f &= (f_0 + f_1) \otimes f_1 \otimes f_1 + (f_0 - f_1) \otimes f_0 \otimes f_0 + f_1 \otimes (f_0 + f_1) \otimes (f_0 - f_1),\label{eq:rank2b}
\end{align}
where $f_0, f_1 \in L^2_\mathbb{R}(\{0,1\})$ are linearly independent. Let $\mathcal{D}$ and $\mathcal{O}$ be the subsets of functions in $L^2_\mathbb{R}(\{0,1\}^3)$ that take the forms in \eqref{eq:rank2a} and \eqref{eq:rank2b} respectively. One can show that the infimum in  \eqref{eq:rank2}  cannot be attained for any $f \in \mathcal{D} \cup \mathcal{O}$. In addition $\mathcal{O}$ is an open subset in $L^2_\mathbb{R}(\{0,1\}^3)$ with positive volume. So the best two-term approximation problem fails to have a solution on a positive-volumed set.
On the other hand, if one had worked over $\mathbb{C}$, then one can show that the only complex-valued functions $f \in L^2_\mathbb{C}(\Omega_1 \times \Omega_2 \times \Omega_3) =L^2_\mathbb{C}(\{0,1\}^3)$ with $\rank_\mathbb{C}(f) > 2$ are those taking the form in \eqref{eq:rank2a}
where $f_0, f_1 \in L^2_\mathbb{C}(\{0,1\})$ are linearly independent. While the infimum in  \eqref{eq:rank2}  also cannot be attained for such an $f $, the subset of all such functions has zero volume in $L^2_\mathbb{C}(\{0,1\}^3)$. So  the best two-term approximation problem fails to have a solution only on a zero-volumed set.}

Up until this point, we have presented our discussions entirely in the language of function approximation in order to put the problems \eqref{eq:main} and \eqref{eq:join} within the context they most frequently arise, which traditionally goes under the heading of \emph{nonlinear approximation}. Nevertheless, the subject has witnessed many recent breakthroughs and is now studied under a number of different names \cite{Can, CT, CDD, Cyb, Don, DE, GMS, GN, Gross, Tem}. In this article we will undertake  an approach via complex algebraic geometry and real analytic geometry. From our perspective, the problems in \eqref{eq:main} and \eqref{eq:join}  are respectively about \emph{secant varieties} and \emph{join varieties}. We will henceforth drop the function approximation description in order to focus on the crux of the issue.

In the rest of this article, we will let $U$ denote a real vector space and $V$ denote a complex vector space.
In geometric terms,  let $X$ be a closed semianalytic subset of an $\RR$-vector space $U$, and $Z \subseteq X$ be a semianalytic subset of $X$ with $\dim Z < \dim X$. The set $Z$ here represents the `bad points' to be avoided --- given $p \in U$,  we seek a best approximation $x_* \in X$ that attains $\min_{x \in X} \lVert p - x\rVert$ but we also want $x_* \notin Z$.
The example we discussed above has $X = \Sigma_2(Y)$, $Z = \Sigma_2(Y) \setminus \Sigma_2^{\circ}(Y)$, and $f \in L^2_\KK(\Omega)$ in the role of $p \in V$, with $Y$ the set of separable functions as defined in \eqref{eq:separable}. 
By this example, we see that  there can be a nonempty open subset $\mathcal{O} \subseteq U$ such that for $p \in \mathcal{O}$, any best approximation of $p$ lies in $Z$, which in this example represents the functions in $X$ that cannot be written as a sum of two separable functions.

We shall show that this does not happen when $\KK =\CC$: If $X$ is a closed irreducible complex analytic variety in a $\CC$-vector space $V$, and $Z$ is a complex analytic subvariety of $X$ with $\dim Z < \dim X < \dim V$, then any general $p \in V$ will have its best approximation in $X$ but not in $Z$, and moreover this best approximation will be unique. We will apply this to obtain a number of existence and uniqueness results for various nonlinear approximations common in applications.

\section{Subanalytic geometry}

We will study our approximation problems using selected tools from subanalytic geometry  \cite{BierstoneMilman88}, which we review in the following. A \emph{semianalytic} subset of $\RR^n$ is a set that is locally of the form
\[
X = \bigcup_{i = 1}^m \bigcap_{j = 1}^k X_{ij},
\]
where each $X_{ij}$ takes the form 
\begin{gather*}
\{(x_1, \dots, x_n) \in \RR^n \mid f_{ij} (x_1, \dots, x_n) = 0\}
\shortintertext{or}
\{(x_1, \dots, x_n) \in \RR^n \mid f_{ij}(x_1, \dots, x_n) > 0\}
\end{gather*}
for some real analytic function $f_{ij}$. Let $W$ be a linear subspace of $\RR^n$, and $\pi \colon \RR^n \to W$ be a linear projection. A subset $X \subseteq W$ is called \emph{subanalytic} if $X$ is locally of the form $\pi(Y)$ for some relatively compact semianalytic subset $Y \subseteq \RR^n$. Given subanalytic subsets $X \subseteq \RR^m$ and $Y \subseteq \RR^n$, a continuous map $f \colon X \to Y$ is called \emph{subanalytic} if the graph of $f$, \blue{i.e., 
\[
\{(x, y) \in X \times Y \mid y = f(x)\},
\]}%
is subanalytic in $\RR^m \times \RR^n$.

We will also make significant use of Whitney stratifications, which we recall here. Let $\mathcal{I}$ be a partially ordered set with order relation $<$, and $X$ a closed subset of a real smooth manifold $M$. A \emph{Whitney stratification} of $X$ is a locally finite collection of disjoint locally closed smooth submanifolds $S_i \subseteq X$ called \emph{strata} that satisfy:
\begin{enumerate}[\upshape (i)]
\item $X = \bigcup_{i \in \mathcal{I}} S_i$.
\item If $i < j$, then $S_i \cap \overline{S}_j \neq \varnothing$ if and only if $S_i \subseteq \overline{S}_j$, where $\overline{S}_j$ denotes the Euclidean closure of $S_j$.
\item Let $S_{\alpha} \subseteq \overline{S}_{\beta}$. Let $\{x_i\}_{i=1}^{\infty} \subseteq S_{\beta}$ and $\{y_i\}_{i=1}^{\infty} \subseteq S_{\alpha}$ be two sequences converging to the same  $y \in S_{\alpha}$. If the secant lines $\spn\{x_i, y_i\}$ converge to some limiting line $\ell$ and the tangent spaces $\operatorname{T}_{x_i}(S_{\beta})$ converge to some limiting space $\tau$, then $\operatorname{T}_{y}(S_{\alpha}) \subseteq \tau$ and $\ell \subseteq \tau$.
\end{enumerate}
For a proper subanalytic map $f \colon X \to Y$, there are Whitney stratifications of $X$ and $Y$ into analytic manifolds such that $f$ is a \emph{stratified map}, i.e., for each stratum $B \subseteq Y$, $f^{-1}(B)$ is a union of connected components of strata of $X$. We refer the reader to \cite{BierstoneMilman88} for further details, \blue{but the pictures of \emph{Whitney's umbrella} $z^2 - yx^2 = 0$ and its Whitney stratification  in Figure~\ref{fig:umbrella} ought to give readers an intuitive idea.}

\begin{figure}
\includegraphics [viewport = 150 450 450 700, scale = 0.65] {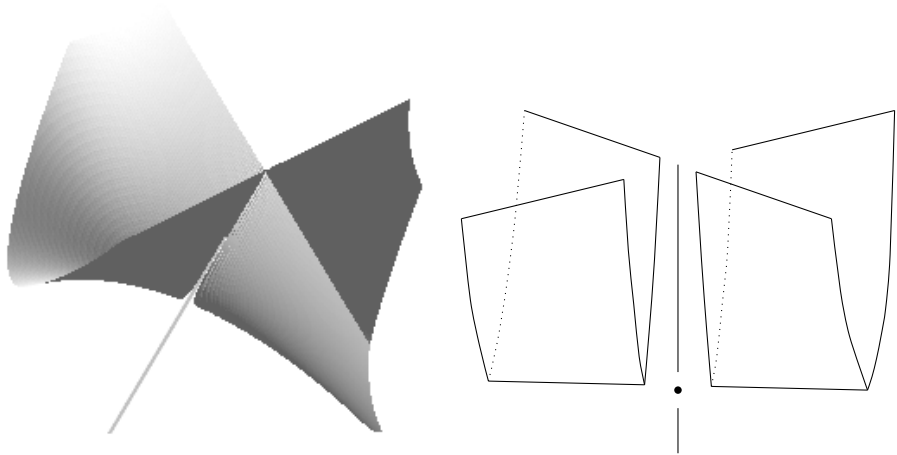}
\hspace*{-6ex}
\begin{tikzpicture}[scale=1.2]
title={Stratification},
\draw [black]  (0.2,-0.08) -- (1.7,-0.6);
\draw [black]  (-1.7,0.6) -- (-0.2,0.08);

\draw [black]  (0.6,0.164) -- (2.4,0.6);
\draw [black]  (-2.4,-0.6) -- (-0.6,-0.164);
\draw [black]  (-2.1, -2.4) -- (-0.37, -2.441);
\draw [black]  (0.37, -2.459) -- (2.1,-2.5);

\draw [black]  plot [smooth] coordinates {(1.7, -0.6) (1.8,-1.4) (1.9,-1.9) (2.1,-2.5)};
\draw [black]  plot [smooth] coordinates {(2.1, -2.5) (2.15,-2.35) (2.2,-2.15) (2.25,-1.9) (2.3,-1.6) (2.35,-1) (2.4,0.6)};

\draw [black]  plot [smooth] coordinates {(-2.4, -0.6) (-2.3,-1.5) (-2.2,-2) (-2.1,-2.4)};
\draw [black, dotted]  plot [smooth] coordinates {(-2.1, -2.4) (-2,-1.96) (-1.9,-1.35) (-1.8,-0.55) (-1.7,0.6)};

\draw [black]  plot [smooth] coordinates {(0.2, -0.08) (0.26,-1.05) (0.32,-1.86) (0.37,-2.459)};
\draw [black, dotted]  plot [smooth] coordinates {(0.37, -2.459) (0.43,-2.13) (0.49,-1.55) (0.55,-0.95) (0.6,0.164)};

\draw [black]  plot [smooth] coordinates {(-0.37, -2.441) (-0.43,-2.13) (-0.49,-1.55) (-0.55,-0.95) (-0.6,-0.164)};
\draw [black]  plot [smooth] coordinates {(-0.2, 0.08) (-0.26,-1.05) (-0.32,-1.86) (-0.37,-2.441)};

\draw [black]  (0, 0) -- (0,-2.3);
\draw [black]  (0, -2.7) -- (0,-3.2);

\node at (0,-2.5) [circle,fill,inner sep=1pt]{};

\end{tikzpicture}
\caption{\emph{Left}: Whitney's umbrella $z^2 - yx^2 = 0$. \emph{Right}: Stratification of Whitney's umbrella.}\label{fig:umbrella}
\end{figure}

The following result is well-known in the semialgebraic setting \cite[Theorem~3.3]{Durfee83} but we need it in a subanalytic setting; fortunately a similar proof yields the required analogous result.
\begin{lemma}\label{lem:subanaldiffae}
Let $U_1$ and $U_2$ be real vector spaces. Let $X \subseteq U_1$ be a $k$-dimensional subanalytic subset and $f \colon X \to U_2$ be a subanalytic map. Then the set of points of $X$ where $f$ is not differentiable is contained in a subanalytic subset  of dimension strictly smaller than $k$.
\end{lemma}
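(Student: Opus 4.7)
The plan is to produce a Whitney stratification of $X$ compatible with $f$ and then argue that $f$ is automatically differentiable on every top-dimensional stratum, which confines the non-differentiable locus to the union of strata of dimension strictly less than $k$.

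First I would localize so that the stratification theorem for subanalytic maps quoted from \cite{BierstoneMilman88} applies cleanly. Cover $U_1$ by a countable, locally finite family of closed semianalytic balls $\{B_i\}$. Then each $X_i \coloneqq X \cap B_i$ is a relatively compact subanalytic subset of $U_1$, and the restriction $f_i \coloneqq f|_{X_i}$ is automatically a proper subanalytic map. Since a locally finite union of subanalytic subsets of dimension less than $k$ remains subanalytic of dimension less than $k$, it suffices to prove the statement for each $X_i$ separately and then take the union over $i$.

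Next I would apply the stratification theorem for proper subanalytic maps to obtain a Whitney stratification $\{S_\alpha\}_{\alpha \in \mathcal{I}}$ of $X_i$ into analytic submanifolds of $U_1$ along which $f_i$ is analytic. Let $S$ be any stratum with $\dim S = k$ and fix $x \in S$. Suppose some other stratum $S'$ contained a sequence converging to $x$; then $x \in \overline{S'} \setminus S'$, so the frontier condition (ii) in the Whitney axioms would force $S \subseteq \overline{S'}$, and consequently $\dim S' > \dim S = k = \dim X$, which is impossible. Therefore $X_i$ coincides with $S$ in a neighborhood of $x$ in $U_1$, and the analytic map $f|_S$ extends, via local analytic coordinates on the submanifold $S$, to an analytic (hence differentiable) function on an ambient neighborhood of $x$. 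In particular $f$ is differentiable at $x$.

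Consequently the non-differentiable locus in $X_i$ is contained in $\bigcup_{\dim S_\alpha < k} S_\alpha$, which is a locally finite union of subanalytic submanifolds of dimension strictly less than $k$, and hence is itself a subanalytic subset of dimension $< k$. Reassembling the countably many pieces $X_i$ produces a subanalytic set of dimension $< k$ containing the full set of non-differentiable points of $f$. The main obstacle is invoking the correct form of the subanalytic stratification theorem, namely one that yields a Whitney stratification along which $f$ is $C^1$ (indeed analytic) on each stratum; the localization step above is precisely what allows us to apply the proper-map version stated in \cite{BierstoneMilman88} without any properness hypothesis on $f$ itself.
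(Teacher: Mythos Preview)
Your argument is correct, but it follows a different path from the paper's proof. The paper does not stratify $X$ directly; instead it stratifies the \emph{graph} $\Gamma\subseteq U_1\times U_2$ of $f$, refines a stratification of $X$ so that each $\pi_1(\Gamma_i)$ is a union of strata $X_j$, and then observes that the graph $\Delta_j$ of $f|_{X_j}$ is an analytic submanifold. It then bounds the nondifferentiable locus on each top-dimensional $X_j$ by the set of critical values of the projection $\pi_1\colon\Delta_j\to X_j$, which is subanalytic of dimension $<k$ by a subanalytic Sard-type statement. Your route is more elementary: once $f$ is known to be analytic on each stratum, openness of the top-dimensional strata in $X$ (via the frontier condition) finishes the job without any Sard argument.

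Two remarks on what each approach costs. First, you invoke a stronger form of the stratification theorem than the one the paper actually quotes: the paper only states the preimage condition (``$f^{-1}(B)$ is a union of strata''), whereas you need that $f$ restricted to each stratum is real-analytic (or at least $C^1$). This stronger conclusion is indeed available in \cite{BierstoneMilman88}, so your use is legitimate, but you should flag that you are calling on more than the displayed statement. Second, your localization step to force properness is a genuine improvement in exposition---the paper applies the stratification machinery without addressing properness---and it makes the appeal to \cite{BierstoneMilman88} cleaner.
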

\begin{proof}
Let $\Gamma \subseteq U_1 \times U_2$ be the graph of $f$, and $\pi_1 \colon X \times U_2 \to X$ be the projection. Given a Whitney stratification $\Gamma = \bigcup_{i \in\mathcal{I}} \Gamma_i$ of $\Gamma$, since each $\pi_1(\Gamma_i)$ is subanalytic, there is a Whitney stratification $X = \bigcup_{j \in J} X_j$ such that $\pi_1(\Gamma_i)$ is a union of strata, namely $f\vert_{X_j} \colon X_j \to U_2$ is a subanalytic map whose graph is an analytic submanifold of $U_1 \times U_2$. For simplicity, we denote $f\vert_{X_j}$ by $f_j$ and the graph of $f_j$ by $\Delta_j$. Then the set of points of $X_j$ where $f_j$ is not differentiable is contained in the critical values of $\pi_1 \colon \Delta_j \to X_j$. Hence the set of nondifferentiable points of $f$ in  $X$ is contained in
\[
\{x \in X_j \mid \dim X_j < k\} 
 \cup  \{x \in X_l \mid \dim X_l = k \text{ and } x \text{ is a critical value of } \pi_1 \colon \Delta_l \to X_l\}
\]
which is a subanalytic subset whose dimension is less than $k$. 
\end{proof}

In this article we prove most of our results for \emph{complex analytic varieties}, i.e., defined locally by the common zero loci of finitely many holomorphic functions; although the examples in Section~\ref{sec:app2} are all \emph{complex algebraic varieties}, i.e., those holomorphic functions are polynomials.\footnote{While every complex \emph{projective} analytic variety is algebraic by Chow's theorem,  there exist complex analytic varieties that are not algebraic.} Since a subset of $\mathbb{C}^n$ may be regarded as a subset of $\mathbb{R}^{2n}$, we have the following relations between (semi)algebraic and (semi)analytic sets:
\begin{align*}
\text{complex algebraic variety}\; &\subseteq \; \text{real semialgebraic set}\; \subseteq \; \text{real semianalytic set}, \\
\text{complex algebraic variety} \; &\subseteq \; \text{complex analytic variety} \; \subseteq \; \text{real semianalytic set}.
\end{align*}
So for example any property of real semianalytic sets will automatically be satisfied by a complex analytic variety.  In particular, the following important theorem \cite{GoreskyMacPherson88} also holds true in subanalytic, semialgebraic, or complex algebraic contexts; but for our purpose, it will be stated for complex analytic sets.
\begin{theorem}\label{thm: WhitneyStratification}
Let $M$ be a complex manifold and $Z \subseteq M$ be a closed complex analytic subset. Then there is a Whitney stratification $Z = \bigcup_{i \in \mathcal{I}} S_i$ of $Z$ such that
\begin{enumerate}[\upshape (a)]
\item each $S_i$ is a complex submanifold of $M$;
\item if $S_i \subseteq \overline{S}_j$, there is a vector bundle $E_i \to S_i$, a neighborhood $U_i \subseteq E_i$ of the zero section $S_i$, and a homeomorphsim $h: U_i \to \overline{S}_j$ of $U_i$ to an open subset $h(U_i)$ in $\overline{S}_j$.
\end{enumerate}
\end{theorem}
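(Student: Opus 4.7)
The plan is to establish (a) by an inductive construction on dimension using the (complex analytic) singular locus, refining to achieve the Whitney conditions; and then to derive (b) from Mather's tubular neighborhood / control data theorem for Whitney-stratified spaces.

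For (a), I would argue by induction on $\dim_\CC Z$. The smooth locus $Z_{\mathrm{reg}} \coloneqq Z \setminus Z_{\mathrm{sing}}$ is a complex submanifold of $M$ whose complement $Z_{\mathrm{sing}}$ is a closed complex analytic subset of strictly smaller dimension; take $Z_{\mathrm{reg}}$ as a top stratum and apply the inductive hypothesis to $Z_{\mathrm{sing}}$. This yields a decomposition of $Z$ into locally closed complex submanifolds. To enforce Whitney's conditions (ii) and (iii), I would invoke the classical fact (due to Whitney, and extended by \L{}ojasiewicz, Hironaka, and Hardt in the (semi)analytic setting) that any locally finite partition of a closed semianalytic set into analytic submanifolds admits a refinement that is a Whitney stratification. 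A key observation is that the ``bad loci'' where conditions (ii), (iii) fail within a given complex analytic stratum are themselves complex analytic subsets of that stratum, so the refinement can be carried out entirely within the complex-analytic category; in particular, each refined stratum remains a complex submanifold of $M$.

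For (b), the required local model is precisely the content of Mather's theory of control data on a Whitney-stratified set. The plan is to construct, for each stratum $S_i$, a triple $(T_i, \pi_i, \rho_i)$, where $T_i \subseteq M$ is an open neighborhood of $S_i$, the map $\pi_i \colon T_i \to S_i$ is a smooth retraction, and $\rho_i \colon T_i \to [0,\infty)$ is a ``tubular function'' vanishing exactly on $S_i$, subject to the Mather compatibility relations whenever $S_i \subseteq \overline{S}_j$. I would take $E_i \to S_i$ to be the normal bundle of $S_i$ in $M$; then an exponential-type map built from $\pi_i$ and $\rho_i$ furnishes the homeomorphism $h \colon U_i \to \overline{S}_j$ from a neighborhood $U_i$ of the zero section onto an open subset of $\overline{S}_j$. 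Whitney's condition (iii) is exactly the transversality hypothesis that allows the control data on $S_i$ to be extended consistently through the higher-dimensional strata $S_j$ with $S_i \subseteq \overline{S}_j$.

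The main obstacle is the inductive construction of the control data compatibly across the whole poset $(\mathcal{I},<)$, not the stratification itself. One orders the strata by increasing dimension and extends the control system one stratum at a time, and at each step the non-trivial input is Whitney's condition (iii), which guarantees that the tangential/normal directions of $S_j$ along $S_i$ vary with enough regularity to glue the tubes. A secondary technical point is to carry out the refinement in (a) so that complex-analyticity is preserved; this is handled by noting that the ``Whitney umbrella'' obstructions are defined by vanishing of complex-analytic minors and hence cut out complex analytic subvarieties within each candidate stratum, which can then be split off into lower-dimensional complex submanifold strata without leaving the category.
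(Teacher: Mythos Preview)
The paper does not supply its own proof of this theorem: it is quoted from Goresky--MacPherson's \emph{Stratified Morse Theory} and used as a black box (specifically, in the proof of Theorem~\ref{thm:cmplxapp}, to guarantee that a stratum $Z$ has a small tubular neighborhood inside the ambient variety $X$). So there is no in-paper argument to compare against.

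Your outline is the standard Thom--Mather route to results of this type and is essentially what the cited reference carries out: inductive stratification by singular loci followed by a Whitney refinement for (a), and construction of compatible control data $(T_i,\pi_i,\rho_i)$ for (b). One correction on (b): you propose to take $E_i$ to be the normal bundle of $S_i$ in the ambient manifold $M$, but that bundle has rank $\dim_{\mathbb{R}} M - \dim_{\mathbb{R}} S_i$, which is too large to model a neighborhood in $\overline{S}_j$ (of real dimension $\dim_{\mathbb{R}} S_j$). The relevant bundle has rank $\dim_{\mathbb{R}} S_j - \dim_{\mathbb{R}} S_i$ and is extracted from the control data \emph{restricted to} $\overline{S}_j$ --- i.e., from the fibers of $\pi_i|_{T_i \cap \overline{S}_j}$ --- not from the full tubular neighborhood of $S_i$ in $M$.
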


\subsection*{Notations and terminologies} In the next sections, we will frame our discussion over an abstract real vector space $U$ or an abstract complex vector space $V$. By a semialgebraic or subanalytic subset $X \subseteq U$ we mean that $X$ can be identified with some semialgebraic or subanalytic subset in $\RR^n$ when we identify $U \cong \RR^n$ by fixing a basis of $U$. Likewise, by an algebraic or analytic variety $X \subseteq V$ we mean that $X$ can be identified with some algebraic or analytic variety in $\CC^n$ when we identify $V \cong \CC^n$ by fixing a basis of $V$.

The main reason we prefer such coordinate-free descriptions instead of assuming at the outset that $U = \RR^n$ or $V = \CC^n$ is that we will ultimately be applying the results to cases where $U$ and $V$ have additional structures, e.g., $ \KK^{n_1} \otimes \dots \otimes \KK^{n_d}$, $ \mathsf{S}^d(\KK^n)$, $ \mathsf{\Lambda}^k(\KK^n)$, $ \mathsf{S}^d(\mathsf{\Lambda}^{k}(\KK^n))$, $ \mathsf{S}^{d_1}(\KK^n) \otimes \dots \otimes \mathsf{S}^{d_n}(\KK^n)$, $ \mathsf{\Lambda}^{k_1}(\KK^n) \otimes \dots \otimes \mathsf{\Lambda}^{k_n}(\KK^n)$, etc.

The results in this article will be proved for \emph{general points}. A point in a semianalytic set $X$ is said to be \emph{general} with respect to a property if the subset of points that do not have that property is contained in a semianalytic subset whose dimension is strictly smaller than $\dim X$. In particular, a point  in a real vector space is said to be \emph{general} with respect to a property if the set of points that do not have that property is contained in a real analytic hypersurface. The same  notion applies to a point in a complex vector space, regarded as a real vector space of real dimension twice its complex dimension.

Establishing that a result holds true for all general points is far stronger than the common practice in applied and computational mathematics of establishing it  `with high probability' or `almost surely' or `almost everywhere'. If a result is valid for all general points, then not only do we know that it is valid almost surely/everywhere but also that the invalid points are all limited to a subset of strictly smaller dimension. In fact, when the result is about points in a vector space, which is the case in our article, this subset of invalid points can be described by a single equation that may in principle be determined, so that we know where the invalid points lie.

\section{Best approximation by points in a closed subanalytic set}\label{sec:real}

Let $U$ be an $n$-dimensional real vector space with an inner product $\langle \,\cdot , \cdot \, \rangle$ and corresponding $\ell^2$-norm  $\|\cdot \|$. Let $X \subseteq U$ be a closed subanalytic set. We define the squared distance function $d$ by 
\[
d \colon U \to \RR, \quad
p \mapsto \min_{q \in X} \|p - q\|^2.
\]
For $p \notin X$, a \emph{best $X$-approximation} of $p$  is a minimizer $q \in X$ that attains  $\min_{q \in X}\|p - q\|^2$, and it is customary to write $\argmin_{q \in X}\|p - q\|^2$ for the set of all such minimizers. Let
\begin{equation}\label{eq:nonexist}
\mathcal{C}(X) \coloneqq \{p \in U \setminus X \mid p \text{ does not have a unique best approximation in } X\}.
\end{equation}

We have the following subanalytic analogue of  \cite[Theorem~3.7]{FriedlandStawiska15}.
\blue{\begin{lemma}\label{lem:aeuni}
Let $m, n \in \mathbb{N}$. For any  closed subanalytic set $X \subseteq U$,
the set $\mathcal{C}(X)$ has the following two properties:
\begin{enumerate}[\upshape (i)]
\item $\mathcal{C}(X) \cap \overline{B}(0, m)$ is subanalytic, where $\overline{B}(0, m) \coloneqq \{x \in U \mid \|x\| \le m\}$;
\item $\mathcal{C}(X)$ has dimension strictly less than $n$.
\end{enumerate}
\end{lemma}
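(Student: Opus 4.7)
The plan is to handle the two parts sequentially, using the standard operations that preserve subanalyticity---finite unions, intersections, complements, and projections of \emph{relatively compact} subanalytic sets---together with Lemma~\ref{lem:subanaldiffae}. For (i), I fix $m$ and choose any $q_0 \in X$. By the triangle inequality every best approximation $q$ of any $p \in \overline{B}(0, m)$ satisfies $\|q\| \le 2m + \|q_0\| \eqqcolon R$, so all minimizers lie in the compact subanalytic subset $A \coloneqq X \cap \overline{B}(0, R)$. The set of pairs $(p,q) \in \overline{B}(0, m) \times A$ for which $q$ is a best $X$-approximation of $p$ can be written as the complement, inside $\overline{B}(0, m) \times A$, of the projection along $q'$ of the relatively compact subanalytic set
\[
\bigl\{(p, q, q') \in \overline{B}(0, m) \times A \times A \mid \|p - q'\|^2 < \|p - q\|^2\bigr\};
\]
call this set of pairs $E$. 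Since $X$ is subanalytic so is $U \setminus X$, and
\[
\mathcal{C}(X) \cap \overline{B}(0, m) = (U \setminus X) \cap \pi\bigl\{(p, q_1, q_2) \in \overline{B}(0, m) \times A \times A \mid (p, q_1), (p, q_2) \in E, \; q_1 \neq q_2\bigr\},
\]
with $\pi$ denoting projection onto the first factor; this yields subanalyticity.

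For (ii), the same projection-and-complement bookkeeping shows that the squared-distance function $d$ is subanalytic on each $\overline{B}(0, m)$. The crucial analytic step is: if $p \in U \setminus X$ and $d$ is differentiable at $p$, then the best approximation of $p$ is unique. Indeed, for any minimizer $q$, the polynomial function $g_q(p') \coloneqq \|p' - q\|^2$ satisfies $g_q \ge d$ pointwise with equality at $p$, so $g_q - d$ attains a minimum at $p$; differentiability of $d$ there forces $\nabla d(p) = \nabla g_q(p) = 2(p - q)$, so $q = p - \tfrac{1}{2}\nabla d(p)$ is determined by $p$. Consequently,
\[
\mathcal{C}(X) \subseteq \{p \in U \setminus X \mid d \text{ is not differentiable at } p\}.
\]
Applying Lemma~\ref{lem:subanaldiffae} to $d\vert_{\overline{B}(0,m)}$ bounds the non-differentiability locus on each ball by a subanalytic subset of dimension strictly less than $n$; since $\dim \mathcal{C}(X) = \sup_m \dim(\mathcal{C}(X) \cap \overline{B}(0, m))$, this finishes (ii).

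The main obstacle I anticipate is the subanalytic quantifier elimination: unlike Tarski--Seidenberg for semialgebraic sets, the image of a subanalytic set under an arbitrary projection need not be subanalytic, and one must track relative compactness at every step. This is exactly why the lemma restricts to $\overline{B}(0, m)$ in (i), and why I pass from $U$ to $\overline{B}(0, m)$ at the outset of each part before invoking Lemma~\ref{lem:subanaldiffae}.
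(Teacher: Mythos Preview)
Your proof is correct, and for part~(ii) it is essentially the paper's argument: show that $d\vert_{\overline{B}(0,m)}$ is subanalytic, observe that $\mathcal{C}(X)$ is contained in the nondifferentiability locus of $d$, and apply Lemma~\ref{lem:subanaldiffae}. The paper cites \cite[Theorem~2.1]{FriedlandStawiska15} for the equality $\mathcal{C}(X) = \{p \in U\setminus X \mid d \text{ not differentiable at } p\}$, whereas you prove the one inclusion you actually need (differentiable $\Rightarrow$ unique minimizer) directly via the touching-function argument $g_q \ge d$, which makes your version self-contained.

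For part~(i) you take a genuinely different route. The paper deduces subanalyticity of $\mathcal{C}(X)\cap\overline{B}(0,m)$ \emph{indirectly}, by first identifying $\mathcal{C}(X)$ with the nondifferentiable locus of $d$ and then appealing to Lemma~\ref{lem:subanaldiffae}. You instead write $\mathcal{C}(X)\cap\overline{B}(0,m)$ \emph{directly} as a subanalytic set via the ``two distinct minimizers'' description, after first localizing all candidate minimizers to the compact set $A = X \cap \overline{B}(0,R)$. This is cleaner: it avoids any reliance on the external reference, and it sidesteps the slight awkwardness that Lemma~\ref{lem:subanaldiffae} as stated only gives \emph{containment} of the nondifferentiable set in a lower-dimensional subanalytic set rather than subanalyticity of that set itself. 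The cost is that you must be careful with relative compactness at each projection step, which you correctly flag; your a~priori bound $\|q\| \le 2m + \|q_0\|$ is exactly what makes this work.
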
}%

\begin{proof}
We start by defining the two maps:
\[
\begin{aligned}
f \colon U \times \RR \times X &\to \RR,                               & (p, t, q)                    &\mapsto \|p - q\|^2 - t, \\
g \colon U \times \RR \times X \times (0, \infty) &\to \RR,  & (p, t, q, \varepsilon) &\mapsto t + \varepsilon - \|p - q\|^2,
\end{aligned}
\]
and the two projections:
\begin{align*}
\pi_1 \colon U \times \RR \times X &\to U \times \RR, \\
\pi_2 \colon U \times \RR \times X \times (0, \infty) &\to U \times \RR \times X.
\end{align*}
Let $\Gamma$ be the graph of $d$. Then
\[
\Gamma = \bigl[U \times [0, \infty)\bigr] \cap \bigl[ U \times \RR \setminus \pi_1 \bigl(f^{-1}(- \infty, 0) \bigr)\bigr] \cap \bigl[ \pi_1 \bigl(U \times \RR \times X \setminus \pi_2 (g^{-1}(-\infty, 0] ) \bigr) \bigr]
\]
because 
\begin{align*}
(p, t) &\in  U \times \RR \setminus \pi_1 \bigl(f^{-1}(- \infty, 0)\bigr) \quad \iff \quad \|p - q\|^2 \ge t \text{ for all } q \in X,
\shortintertext{and}
(p, t) &\in  \pi_1\bigl(U \times \RR \times X \setminus \pi_2(g^{-1}(-\infty, 0] )\bigr) \quad \\
&\qquad \iff \quad \text{for any }\varepsilon > 0,\text{ there is a } q \in X\text{ with }\|p - q\|^2 < t + \varepsilon.
\end{align*}
Therefore $d \vert_{\overline{B}(0, m)}$, the squared distance function restricted to $\overline{B}(0, m)$, is a subanalytic function for each $m \in \mathbb{N}$.
By \cite[Theorem~2.1]{FriedlandStawiska15}, the set $\mathcal{C}(X)$ in \eqref{eq:nonexist} comprises precisely the nonsmooth points:
\[
\mathcal{C}(X) = \{p \in U \setminus X \mid d \text{ is not differentiable at } p\}.
\]
Hence, by Lemma~\ref{lem:subanaldiffae}, $\mathcal{C}(X) \cap \overline{B}(0, m)$ is a subanalytic subset with dimension less than $n$. Since
\[
\mathcal{C}(X) = \bigcup_{m=1}^{\infty} \bigl( \mathcal{C}(X) \cap \overline{B}(0, m) \bigr),
\]
$\mathcal{C}(X)$ must also have dimension less than $n$. 
\end{proof}

\section{Best approximation by points in a closed complex variety}\label{sec:cplx}

We will now switch our discussion from $\mathbb{R}$ to $\mathbb{C}$. Let $V$ be an $n$-dimensional complex vector space with a Hermitian inner product $\langle \,\cdot , \cdot\, \rangle$ and corresponding $\ell^2$-norm  $\|\cdot \|$. For any closed irreducible complex analytic variety $X \subseteq V$, again we let $\mathcal{C}(X)$ denote the set of points which do not have a unique best approximation in $X$, i.e., with $V$ in place of $U$ in \eqref{eq:nonexist}. Since $V$ may be regarded as a real vector space of real dimension $2n$, $X$ is naturally a real analytic variety. In particular, best $X$-approximations are  as defined in Section~\ref{sec:real}.

By Lemma~\ref{lem:aeuni}, $\mathcal{C}(X)$ has real dimension strictly less than $2n$ and we easily deduce the following.
\begin{corollary}
Let  $X$ be a closed complex analytic variety in a complex vector space $V$. Then a general $p \in V$ has a unique best $X$-approximation.
\end{corollary}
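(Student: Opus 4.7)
The plan is to reduce the corollary directly to Lemma~\ref{lem:aeuni} by reinterpreting the setup over $\RR$. First, I would regard the complex vector space $V$ as a real vector space $U$ of real dimension $2n$ via the standard identification $V \cong \RR^{2n}$; the Hermitian inner product on $V$ then restricts to a genuine real inner product (its real part) that induces the same norm $\|\cdot\|$, so distances and best-approximation problems transfer verbatim between the two viewpoints.

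Next, I need to check that the hypothesis of Lemma~\ref{lem:aeuni} applies. Because $X \subseteq V$ is a closed complex analytic variety, every defining holomorphic equation $f = u + iv$ decomposes into a pair of real-analytic equations $u = 0$ and $v = 0$, so $X$ is a closed real analytic --- and in particular closed semianalytic, hence subanalytic --- subset of $U \cong \RR^{2n}$. This is exactly the observation highlighted just before Theorem~\ref{thm: WhitneyStratification} in the excerpt.

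With these two reductions in place, Lemma~\ref{lem:aeuni} applies with $n$ replaced by $2n$, so the set
\[
\mathcal{C}(X) = \{ p \in V \setminus X \mid p \text{ has no unique best } X\text{-approximation} \}
\]
is a countable union of subanalytic subsets $\mathcal{C}(X) \cap \overline{B}(0,m)$ of $U$, each of real dimension strictly smaller than $2n$. Therefore $\mathcal{C}(X)$ is contained in a real subanalytic subset of $U$ of dimension $< 2n = \dim_\RR V$. By the definition of \emph{general} given in the Notations and terminologies subsection --- a point in $V$, viewed as a real vector space of dimension $2n$, is general with respect to a property if the failure locus is contained in a subanalytic subset of strictly smaller real dimension --- we conclude that a general $p \in V$ lies outside $\mathcal{C}(X)$, i.e., has a unique best $X$-approximation.

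There is essentially no obstacle here: the work has already been done in Lemma~\ref{lem:aeuni}, and the corollary is the immediate complex specialization. The only small point to be careful about is the dimensional bookkeeping ($\dim_\RR V = 2n$) and the fact that "general" in the complex setting is interpreted via the underlying real structure, both of which are handled by the conventions spelled out in the Notations and terminologies subsection.
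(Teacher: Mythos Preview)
Your proposal is correct and follows exactly the paper's own argument: the paper simply remarks that since $V$ may be regarded as a real vector space of real dimension $2n$, $X$ is naturally a real analytic variety, and then invokes Lemma~\ref{lem:aeuni} to conclude that $\mathcal{C}(X)$ has real dimension strictly less than $2n$. You have just spelled out the details (the identification $V \cong \RR^{2n}$, the passage from complex analytic to real semianalytic, and the dimensional bookkeeping) that the paper leaves implicit.
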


For any $p \in V \setminus (\mathcal{C}(X) \cup X)$, $p$ has a unique best $X$-approximation, which we will denote by $\pi_{X} (p)$. Thus this gives us a map $\pi_X \colon V \setminus (\mathcal{C}(X) \cup X) \to X$ that sends $p$ to its unique best $X$-approximation $\pi_X (p)$, i.e.,
\[
\| p - \pi_X(p)\|^2 = \min_{q \in X} \| p - q \|^2.
\]
The map $\pi_X$ is also clearly subanalytic when restricted to $\overline{B}(0, m)$. With this observation, we will state  our main result.

\begin{theorem}\label{thm:cmplxapp}
Let $X$ be a closed irreducible complex analytic variety in $V$. Let $Z \subseteq X$ be any complex analytic subvariety with $\dim Z < \dim X$. For a general $p \in V$, its unique best $X$-approximation $\pi_X (p)$ does not lie in $Z$.
\end{theorem}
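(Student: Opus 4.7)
The plan is to reduce the theorem to a dimension count on the preimage $\pi_X^{-1}(Z)$. By Lemma~\ref{lem:aeuni} we already know $\dim_\RR \mathcal{C}(X) < 2n$, and $X$ itself has real dimension $2\dim_\CC X < 2n$, so the set of $p \in V$ where $\pi_X(p)$ is undefined is contained in a subanalytic set of dimension strictly less than $\dim_\RR V = 2n$. It therefore suffices to show that $\pi_X^{-1}(Z)$, as a subset of the domain of $\pi_X$, also has real dimension strictly less than $2n$.

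The heart of the argument is a pointwise bound on the fibers of $\pi_X$ over $Z$. Set $d \coloneqq \dim_\CC X$ and fix $q \in X$. I claim that whenever $\pi_X(p) = q$, the displacement $p - q$ is Hermitian-orthogonal to the complex tangent cone $C_q X$, and hence lies in a complex linear subspace of complex dimension at most $n - d$. To prove this, pick $v \in C_q X$ together with sequences $a_k \in X$ and $t_k \downarrow 0$ satisfying $t_k^{-1}(a_k - q) \to v$; expanding $\|p - a_k\|^2 \ge \|p - q\|^2$ as $-2\Re\langle p - q,\, a_k - q\rangle + \|a_k - q\|^2 \ge 0$, dividing by $t_k > 0$, and passing to the limit yields $\Re\langle p - q,\, v\rangle \le 0$. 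The crucial point is that $X$ is complex analytic, so $C_q X$ is a $\CC^\times$-invariant cone, and the same inequality applies with $\lambda v$ in place of $v$ for every $\lambda \in \CC^\times$. Taking $\lambda \in \{\pm 1, \pm i\}$ forces both $\Re\langle p - q,\, v\rangle = 0$ and $\Im\langle p - q,\, v\rangle = 0$, so $\langle p - q,\, v\rangle = 0$. Because $X$ is irreducible, $C_q X$ has pure complex dimension $d$, so its Hermitian orthogonal complement $(C_q X)^{\perp}$ has complex dimension at most $n - d$, and thus $\pi_X^{-1}(q) - q \subseteq (C_q X)^{\perp}$ has real dimension at most $2(n - d)$.

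It remains to assemble these fiberwise bounds into a global one. The incidence set $\Gamma_Z \coloneqq \{(q, p) \in Z \times V : \pi_X(p) = q\}$ is subanalytic when intersected with any $\overline{B}(0, m) \times \overline{B}(0, m)$, because the graph of $\pi_X$ is subanalytic on compact sets (as observed just before the theorem) and $Z$ is complex analytic, hence subanalytic. Its projection to $Z$ has fibers of real dimension at most $2(n - d)$ by the previous paragraph, while $\dim_\RR Z \le 2\dim_\CC Z \le 2(d - 1)$ by hypothesis. The standard subanalytic fiber-dimension estimate, applied on each ball and then taking the union over $m$, gives
\[
\dim_\RR \pi_X^{-1}(Z) \;=\; \dim_\RR \Gamma_Z \;\le\; 2(d - 1) + 2(n - d) \;=\; 2n - 2,
\]
which is strictly less than $2n$, as required.

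The delicate step is the promotion of $\Re\langle p - q, v\rangle \le 0$ to the full complex vanishing $\langle p - q, v\rangle = 0$. Over $\RR$ the analogous argument only yields a one-sided real inequality along each ray of the real tangent cone, which in general cuts out only a half-space and hence gives no codimension bound on the fiber; this is precisely the mechanism behind the positive-volume failure set of \cite[Theorem~8.1]{deSilvaLim08}. The $\CC^\times$-invariance of $C_q X$, which is particular to the complex analytic setting, is exactly what upgrades the one-sided real inequality to a two-sided Hermitian orthogonality, producing fibers of real codimension $2d$ and in turn the strict inequality $2n - 2 < 2n$ needed for the conclusion.
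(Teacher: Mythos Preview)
Your argument is correct and takes a genuinely different route from the paper's. The paper argues by contradiction: it assumes an open set $\mathcal{O}$ with $\pi_X(\mathcal{O})\subseteq Z$, uses a Whitney stratification to pass to a smooth locus of $Z$, observes (as you do) that the first-order optimality condition forces $p-\pi_X(p)$ to be Hermitian-orthogonal to $\operatorname{T}_q(Z)$, and then slices by the orthogonal complement $W$ of $\operatorname{T}_q(Z)$ to reduce the dimension of $Z$ down to zero, finally invoking the curve case (Lemma~\ref{lem:curve}). You instead go straight for a global dimension count: the same $\CC^\times$-invariance that drives Lemma~\ref{lem:curve} is used once, at every $q\in X$, to show $\pi_X^{-1}(q)\subseteq q+(C_qX)^\perp$, which has real dimension at most $2(n-d)$; the subanalytic fiber-dimension inequality then bounds $\dim_\RR\pi_X^{-1}(Z)$ by $2(d-1)+2(n-d)=2n-2$. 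The shared kernel of both proofs is the promotion of the one-sided real inequality $\Re\langle p-q,v\rangle\le 0$ to the full orthogonality $\langle p-q,v\rangle=0$ via $\CC^\times$-invariance of the tangent cone; the difference is packaging. Your approach is shorter, avoids the stratification/tubular-neighborhood machinery and the inductive slicing, and yields the slightly sharper quantitative bound $\dim_\RR\pi_X^{-1}(Z)\le 2n-2$ rather than merely $<2n$. The paper's approach, on the other hand, isolates the curve case as a standalone lemma, which makes the geometric contrast with the real cuspidal example in Figure~\ref{fig:real case} more transparent. One minor point worth making explicit in your write-up: when you say ``$(C_qX)^\perp$ has complex dimension at most $n-d$'' you are using that the linear span of the $d$-dimensional cone $C_qX$ has complex dimension at least $d$; this is immediate but deserves a word, since $C_qX$ itself is not linear at singular points.
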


\blue{Before proving Theorem~\ref{thm:cmplxapp}, it will be instructive to study the simplest case where $\dim X = 1$ and $\dim  Z = 0$, which will provide us with an intuitive idea as to why Theorem~\ref{thm:cmplxapp} holds; more importantly, it will illustrate the difference between working over $\CC$ and working over $\RR$.

\begin{lemma}\label{lem:curve}
Let $X \subseteq \CC^n$ be an irreducible complex analytic curve and $Z \subseteq X$ be a zero-dimensional subvariety. Then for a general $p \in \CC^n$, its unique best $X$-approximation $\pi_X (p)$ does not lie in $Z$.
\end{lemma}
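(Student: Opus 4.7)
The strategy is a fiber-dimension argument, exploiting the discreteness of the $0$-dimensional set $Z$ together with the low dimension of $X$. Because $X$ has complex dimension $1$ (real dimension $2$) while $V$ has real dimension $2n$, I expect the fiber $\pi_X^{-1}(q) \subseteq V$ over any single point $q \in X$ to be of real dimension at most $2n - 2$: a single first-order optimality condition, read in both the real and imaginary directions of a complex parameter, produces one \emph{complex}-linear constraint. Since $Z$ is zero-dimensional complex analytic, it is locally finite hence at most countable, so $\pi_X^{-1}(Z)$ will be contained in a countable union of real $(2n-2)$-dimensional affine pieces, of real dimension strictly less than $2n$. Together with Lemma~\ref{lem:aeuni} (which rules out non-uniqueness on a further set of real dimension $< 2n$), this would give the conclusion for a general $p \in V$.

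First I would, for each $q \in Z$, pick a nonconstant holomorphic arc $\gamma \colon (\mathbb{C}, 0) \to (X, q)$ with $\gamma'(0) \neq 0$. At a smooth point this is just a local chart; at a singular point I would compose the normalization $\widetilde{X} \to X$ with a local chart of $\widetilde{X}$ at any preimage of $q$, which is possible because $X$ is one-dimensional and hence $\widetilde{X}$ is a smooth complex curve. Either way $\gamma(t) \in X$ for $t$ in a small disk around $0 \in \mathbb{C}$.

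The key computation is then the following. Suppose $p \in \pi_X^{-1}(q)$, so $t \mapsto \|p - \gamma(t)\|^2$ attains its minimum at $t = 0$. Writing $t = s + iu$ with $s, u \in \mathbb{R}$ and using holomorphicity of $\gamma$ (so that $\partial_s \gamma(0) = \gamma'(0)$ and $\partial_u \gamma(0) = i\gamma'(0)$), vanishing of the two real partial derivatives at $0$ reads
\[
\operatorname{Re}\langle p - q, \gamma'(0)\rangle = 0 = \operatorname{Im}\langle p - q, \gamma'(0)\rangle,
\]
i.e.\ $\langle p - q, \gamma'(0)\rangle = 0$. This single complex-linear equation confines $\pi_X^{-1}(q)$ to the complex affine hyperplane $q + \gamma'(0)^{\perp}$, which has real dimension $2n - 2$.

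Finally, since $Z$ is at most countable, $\pi_X^{-1}(Z)$ is contained in a countable union of real $(2n-2)$-dimensional affine subspaces, so has real dimension at most $2n - 2 < 2n$; combined with the set $\mathcal{C}(X)$ of Lemma~\ref{lem:aeuni} (also of real dimension $< 2n$), this is a set general $p$ avoids, yielding $\pi_X(p) \notin Z$. The only mild subtlety is the treatment of singular $q$: rather than invoke stratified or limiting normal cones, the holomorphic-arc / normalization trick handles smooth and singular points uniformly, which is what keeps the argument short.
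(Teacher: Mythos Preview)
Your fiber-dimension strategy is sound and in fact cleaner than the paper's contradiction argument, but there is a genuine gap at singular points of $X$. You assert that composing the normalization $\widetilde{X}\to X$ with a local chart of $\widetilde{X}$ produces a holomorphic arc $\gamma$ with $\gamma'(0)\neq 0$. This is false precisely when $q$ is singular: the normalization, viewed as a map into $\mathbb{C}^n$, has vanishing derivative there. For the cusp $\{y^2=x^3\}$ the normalization is $t\mapsto(t^2,t^3)$ with derivative $(0,0)$ at $t=0$, and indeed no holomorphic arc in $X$ through the cusp has nonzero derivative. In that situation your first-order condition $\langle p-q,\gamma'(0)\rangle=0$ is vacuous and gives no constraint on $p$, so the hyperplane bound on $\pi_X^{-1}(q)$ collapses.

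The fix is straightforward and is exactly what the paper does: write $\gamma(t)=q+t^m v+O(t^{m+1})$ with $v\neq 0$ (such an $m$ always exists since $\gamma$ is nonconstant holomorphic), and use \emph{minimality} rather than mere criticality. From
\[
\|p-\gamma(t)\|^2-\|p-q\|^2=-2\operatorname{Re}\bigl(\bar{t}^{\,m}\langle p-q,v\rangle\bigr)+O(|t|^{m+1})\ge 0
\]
for all small $t\in\mathbb{C}$, and the fact that $\bar{t}^{\,m}$ sweeps out all arguments, you conclude $\langle p-q,v\rangle=0$. This still confines $\pi_X^{-1}(q)$ to a complex affine hyperplane, and the rest of your argument goes through unchanged. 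The paper organizes the same computation as a proof by contradiction (assuming an open ball maps to a single point of $Z$ and exhibiting a direction in which the distance decreases), but the essential content---that the leading-order term of the arc yields one complex-linear constraint even at singular points---is identical. Your packaging via fiber dimension is arguably more transparent; you just need to replace the first derivative by the leading Taylor coefficient.
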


\begin{proof}
We proceed by contradiction. Suppose there is a nonempty open neighborhood $B(p, \varepsilon)$ of $p$ such that $\pi_{X} (B(p, \varepsilon)) \subseteq Z$. Since $Z$ is a collection of points of $X$, we may assume that $0 \in Z$ and $\pi_{X} (B(p, \varepsilon)) = 0$, i.e., 
\[
|x_1|^2 + \dots + |x_n|^2 \le |x_1 - y_1|^2 + \dots + |x_n - y_n|^2
\]
for all $x = (x_1, \dots, x_n) \in B(p, \varepsilon)$ and all $y = (y_1, \dots, y_n) \in X$. By applying elimination theory, in some nonempty open neighborhood $B (0, \eta) \cap X$ of $0$ we may assume $(y_1 (t), \dots, y_n (t))$ is a local parametrization of $X$ such that $y_1 (0) = \dots = y_n (0) = 0$, and each $y_j (t)$ is holomorphic in $t$ around $0$, i.e.,
\[
y_j (t) = \sum_{k=m_j}^{\infty} a_{j, k} t^k,
\]
with $m_j \ge 1$, $a_{j, m_j} \ne 0$, $j =1,\dots,n$. Let
\[
f(x, t) \coloneqq \sum_{j=1}^n \bigl( |x_{j}|^2 - | x_{j} - y_{j} (t)|^2\bigr)
\]
and let $m \coloneqq \min \{m_1, \dots, m_n\}$. Then
\[
f(x, t) = 2 \sum_{j=1}^n \bigl(\Re (a_{j, m_j} t^{m_j} \overline{x}_j) + O(t^{m_j+1}) \bigr) = \Re \bigl(t^m g(x)\bigr) + O(t^{m+1}),
\]
where $g(x) \coloneqq \sum_{j : m_j = m} 2 a_{j, m_j} \overline{x}_j $, i.e., a sum over all $j$ such that $m_j = m$, 
and $O(t^{m+1})$ denotes terms of degrees $\ge m+1$.
Note that  $g(x) \ne 0$ for a general $x$. Let $t^m = \lambda \overline{g(x)}$ for some small $\lambda > 0$. Then $f(x, t) > 0$, which contradicts our assumption that $f(x, t) \le 0$ for all $x = (x_1, \dots, x_n) \in B(p, \varepsilon)$ and all $y = (y_1, \dots, y_n) \in X$. So $\pi_{X} (B(p, \varepsilon))$ cannot be just $0$.
\end{proof}

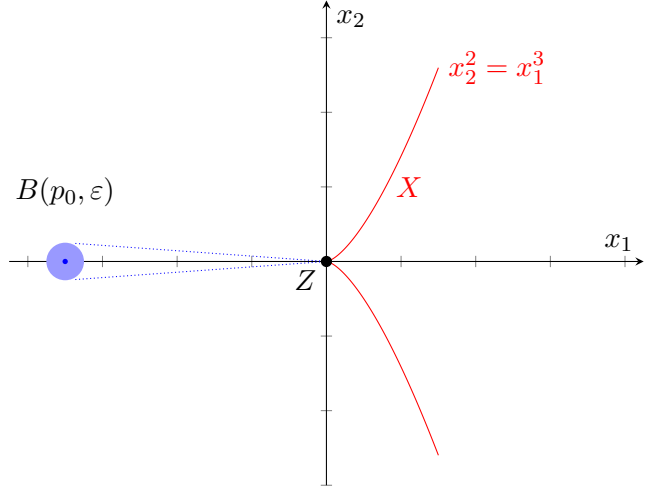
\begin{figure}
\centering
\begin{tikzpicture}[
        point/.style={
            circle,
            fill=blue,
            inner sep=1.5pt,
        },
    ]
        \begin{axis}[
            xmin=-8.5,
            xmax=8.5,
            ymin=-6,
            ymax=7,
            xlabel={$x_1$},
            ylabel={$x_2$},
            scale only axis,
            axis lines=middle,
            domain=-1.912931:3,
            samples=200,
            smooth,
            clip=false,
            axis equal image=true,
            yticklabels={,,},
            xticklabels={,,}
        ]
            \addplot [red] {sqrt(x^3)}
                node[right] {$x_2^2=x_1^3$};
            \addplot [red] {-sqrt(x^3)};

            \fill[blue!40!white] (-7,0) circle (0.25 cm);
            \fill[blue] (-7,0) circle (1 pt);
            
               \node 
                (P) at (-7,0.25 cm)       {};
                \node[above] at (-7, 1.2) {$B(p_0, \varepsilon)$};
               \node 
                (Q) at (-7,-0.25 cm)       {};
               \node [point][black]
                (M) at (0,0)       {};
                \node[left] [black] at (0, -0.5) {$Z$};
                \node[right] [black] at (1.6, 2) {$\color{red}{X}$};
                
            \draw[densely dotted] [blue] (P) -- (M);
            \draw[densely dotted] [blue] (Q) -- (M);
            
        \end{axis}
    \end{tikzpicture}
\caption{Over $\mathbb{R}$, singular point is `exposed.'} \label{fig:real case}
\end{figure}
\begin{figure}
\centering
\includegraphics [scale = 0.6] {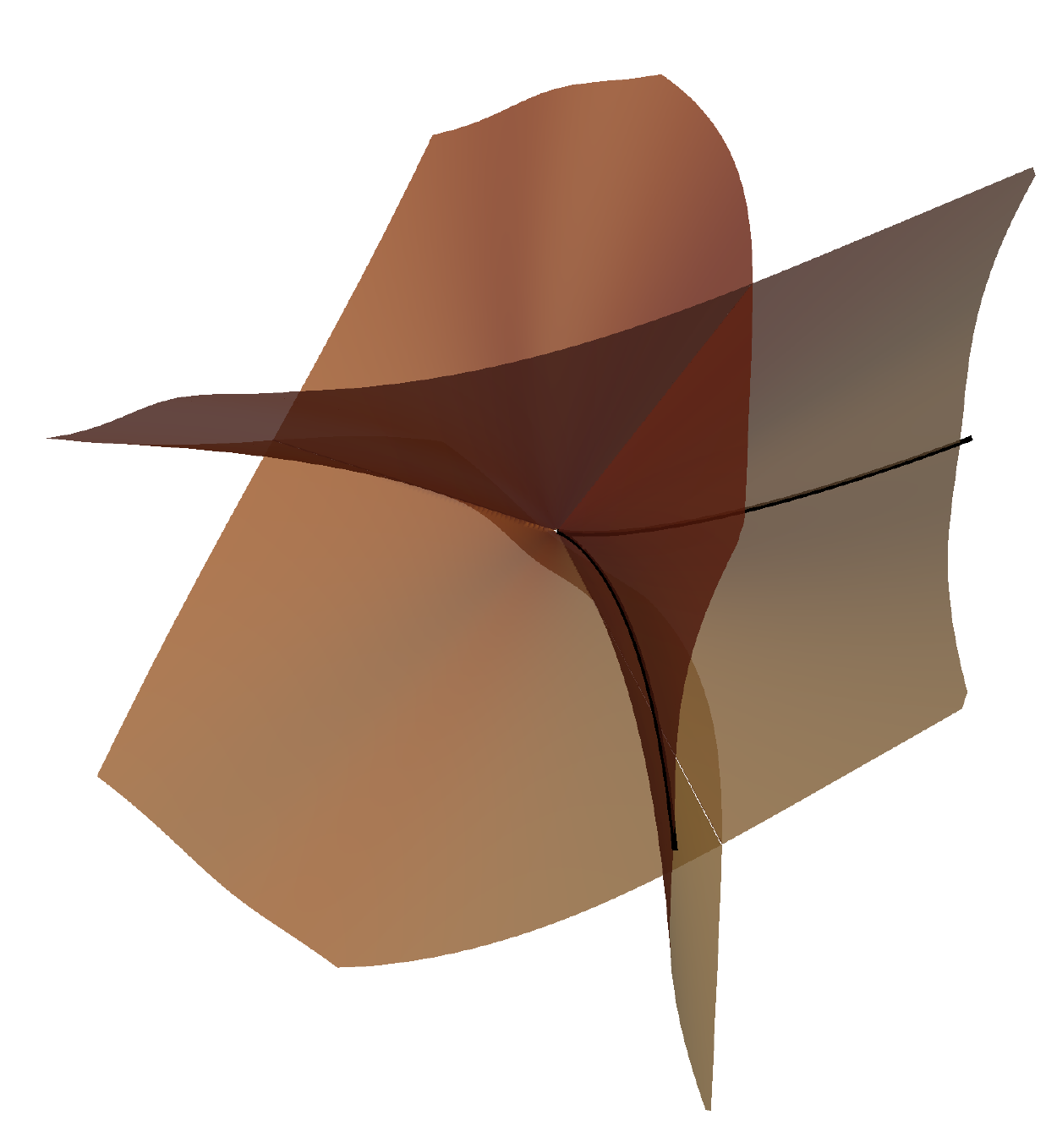}
\caption{Over $\mathbb{C}$, singular point is no longer `exposed.' The figure shows the complex cuspidal curve $x_2^2 =x_1^3$  in $\mathbb{C}^2\simeq\mathbb{R}^4$  projected to $\mathbb{R}^3$.}\label{fig:compcusp}
\end{figure}

An important distinction between $\mathbb{R}$ and $\mathbb{C}$ is that Lemma~\ref{lem:curve} does not hold over $\RR$. An example where
\[
\{p = (x_1, x_2) \in \mathbb{R}^2 \mid q_* \in Z \text{ for all } q_* \in \argmin_{q \in X} \lVert p - q\rVert \}
\]
contains a nonempty open subset is illustrated in Figure~\ref{fig:real case}. Here $X$ is the real cuspidal curve  $\{(x_1, x_2) \in \mathbb{R}^2 \mid x_2^2 = x_1^3\}$, and $Z$ is the cusp $(0, 0)$ in $X$. We can see for any $p_0$ on the negative $x_1$-axis there is a small open neighborhood $B(p_0, \varepsilon)$ such that for any $p \in B(p_0, \varepsilon)$ the best approximation $\pi_X (p)$ of $p$ in $X$ is $Z$. This should be contrasted with the complex cuspidal curve  $\{(x_1, x_2) \in \mathbb{C}^2 \mid x_2^2 = x_1^3\}$, which is a real surface in $\mathbb{C}^2\simeq\mathbb{R}^4$. The projection to $\mathbb{R}^3$ of this curve is the  surface\footnote{The apparent one-dimensional singular locus is an artifact of the projection; the complex curve has only one singular point.} presented in Figure \ref{fig:compcusp}. The real points of the curve form the black cuspidal curve. The crucial observation here is that the complex points of the curve prevent the cusp from being an exposed point, and thus generally preventing it from being a candidate for a best approximant.

Now we are in a position to prove Theorem~\ref{thm:cmplxapp}. A pictorial illustration of the proof is given in Figure~\ref{fig:thm}.}

\begin{figure}
\centering
\begin{tikzpicture}[>=stealth]
%
%
%

\draw [black, thin]  plot [smooth] coordinates {(-2,-1.2) (-1,-0.9) (-0.6,-0.3) (0,0)};
\draw [black, dotted]  plot [smooth] coordinates {(0,0) (1.2,0.85) (2,1.2) (2.5,1.5)};
\draw [red, thick]  plot [smooth] coordinates {(-2,-3.2) (-1,-2.9) (-0.6,-2.3) (0,-2)};
\draw [red, dotted, thick]  plot [smooth] coordinates {(0,-2) (1.2,-1.15) (2,-0.8) (2.5,-0.5)};
\draw [black, thin]  plot [smooth] coordinates {(-2,-1.2) (-1.8,-1.55) (-2,-1.93) (-2.2,-2.5) (-2,-2.95) (-2.1,-3) (-2,-3.2)};
\draw [black, dotted]  plot [smooth] coordinates {(2.5,1.5) (2.7,1.15) (2.5,0.77) (2.3,0.2) (2.5,-0.25) (2.4,-0.3) (2.5,-0.5)};
\draw [black, thin]  plot [smooth] coordinates {(-2,-3.2) (-1,-3.3) (0,-3.55) (1,-3.38) (1.2,-3.57) (2,-3.7)};
\draw [black, thin]  plot [smooth] coordinates {(2,-3.7) (3,-3.2) (3.4,-2.8) (4,-2.5)};
\draw [black, dotted]  plot [smooth] coordinates {(4,-2.5) (5.2,-1.65) (6,-1.3) (6.5,-1)};
\draw [black, dotted]  plot [smooth] coordinates {(2.5,-0.5) (3.5,-0.6) (4.5,-0.85) (5.5,-0.68) (5.7,-0.87) (6.5,-1)};

\draw [blue, thick] plot [smooth] coordinates {(0,-2) (0.8,-2.5) (1,-2.3) (2,-2.6) (3,-2.35) (4,-2.5)};
\draw [blue, thick] plot [smooth] coordinates {(0,-2) (0.2,-1.5) (-0.3,-1) (0,-0.5) (0.1,-0.2) (0,0)};

\draw [black, thin] (7,1.15) -- (7,-4.3);
\draw [black, thin] (7,1.15) -- (-1.2,2);


%
\node [red] at (0,-2) [circle,fill,inner sep=1.2pt]{};

\fill[blue!24!white] (4.2,0.3) circle (0.6cm);
\node [black] at (4.2,0.3) [circle,fill,inner sep=0.9pt] {};

\node [black] at (4.4, -1.7) {$X$};
\node [black] at (6.5, 0.8) {$W$};
\draw [<-] (4.4,0.1) -- (5,-0.3);
\node [black, right] at (5, -0.3) {$B(p, \epsilon) \cap W$};
\draw [<-] (3,-2.5) -- (5,-3);
\node [black, right] at (5, -3) {$X \cap W$};
\draw [<-] (-1,-2.7) -- (-3,-2.3);
\node [black, left] at (-3, -2.3) {$Z$};
\draw [<-] (-0.1,-1.9) -- (-1.6,0.6);
\node [black, left] at (-1.6,0.6) {$Z \cap W$};

%

                
\end{tikzpicture}
\caption{Pictorial illustration of the proof of Theorem~\ref{thm:cmplxapp}.} \label{fig:thm}
\end{figure}
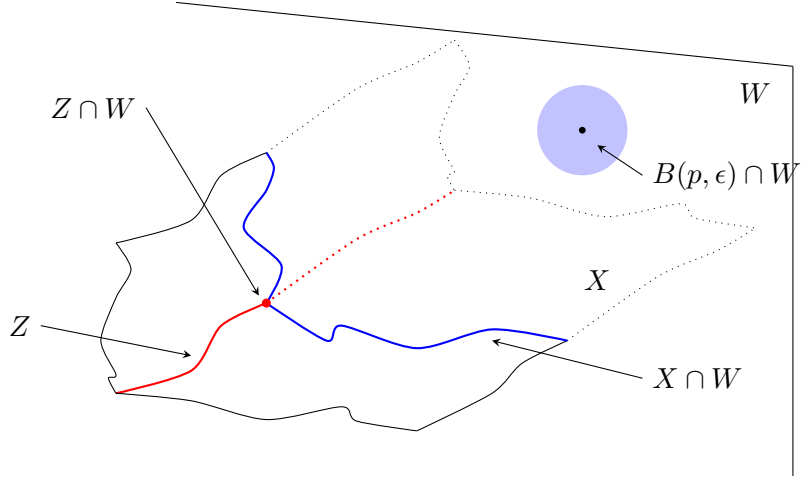

\begin{proofof}{Theorem~\ref{thm:cmplxapp}}
We proceed by contradiction. Suppose there is a nonempty open subset $\mathcal{O} \subseteq V \setminus (\mathcal{C}(X) \cup X)$ such that for any $p \in \mathcal{O}$, the best $X$-approximation $\pi_X (p)$ lies in $Z$. Without loss of generality, we may assume that $Z$ is a minimum subvariety of $X$ containing $\pi_X (\mathcal{O})$ in the sense that any subvariety containing $\pi_X (\mathcal{O})$ has dimension at least $\dim_{\CC} Z$. For simplicity, we choose a basis of $V$ so that we may write $u = (u_1,\dots, u_n)$, $v = (v_1, \dots, v_n) \in V$, and the inner product is given by $\langle u, v \rangle = u_1\overline{v}_1 + \dots + u_n \overline{v}_n$ under this basis. With this assumption, the norm is $\|v\| =( |v_1|^2 + \dots + |v_n|^2)^{1/2}$. 

First we assume $\dim Z > 0$. By taking a Whitney stratification, there is a nonsingular point $z \in Z$ and an open ball $B(z, \rho) \subseteq V$ around $z$, such that $B(z, \rho) \cap Z$ is a connected complex submanifold in $V$, and $\pi_X^{-1} (B(z, \rho) \cap Z)$ contains a nonempty open subset $\mathcal{O}' \subseteq \mathcal{O}$. For notational convenience, we  will still denote $B(z, \rho) \cap Z$ by $Z$ and $\mathcal{O}'$ by $\mathcal{O}$.

Given $p \in \mathcal{O}$, let $B(p, \varepsilon) \subseteq \mathcal{O}$ be a small open ball of $p$, and let $q = \pi_X (p)$. Then
\[
\frac{d}{dt} \langle p - q(t), p - q(t) \rangle\Big\vert_{t = 0} = 0
\]
for any real analytic curve $q(t) \subseteq Z$ with $q(0) = q$, which implies that
\[
\Re \langle p - q, v \rangle = 0 \quad \text{for any} \; v \in \operatorname{T}_q(Z),
\]
where $\operatorname{T}_q(Z)$ is the tangent space of $Z$ at $q$. Without loss of generality, let $q = 0 \in V$, and let
\[
W \coloneqq \{ x \in V \mid \Re \langle x - q, v \rangle = 0 \text{ for any }v \in \operatorname{T}_q(Z)\}.
\]
Since $\operatorname{T}_q(Z) = \operatorname{T}_0(Z)$ is a complex vector space, we in fact have
\[
W = \{ x \in V \mid \langle x, v \rangle = 0 \text{ for any } v \in \operatorname{T}_0(Z)\},
\]
i.e., $W$ is a complex vector subspace of $V$ whose complex codimension is $\dim_{\CC} Z$. Thus
\[
W \cap Z \cap B(0, \delta) =\{ 0\}
\]
for some small open ball $B(0, \delta) \subseteq V$. Since $X$ is irreducible and the set of nonsingular points of $X$ is dense in $X$, $(X \setminus Z) \cap B(0, \delta) \neq \varnothing$. By Theorem~\ref{thm: WhitneyStratification}, $Z$ has a small tubular neighborhood in $X$. Thus $W \cap X \cap B(0, \delta)$ is of positive dimension. Hence it suffices to show that $\pi_{W \cap X \cap B(0, \delta)} (W \cap B(p, \varepsilon))$ cannot be just $\{0\}$.

We have reduced the problem to showing that in the complex vector space $W$, given a positive-dimensional complex analytic variety $W \cap X \cap B(0, \delta)$ and a point $0 \in W \cap X \cap B(0, \delta)$,
\[
\pi_{W \cap X \cap B(0, \delta)} (W \cap B(p, \varepsilon)) \ne \{ 0\}
\]
for a nonempty open ball $W \cap B(p, \varepsilon)$ in $W$. Again for notational simplicity, we will still use $X$ to denote $W \cap X \cap B(0, \delta)$, and $B(p, \varepsilon)$ to denote $W \cap B(p, \varepsilon)$. One should note that  this is in fact the case $\dim Z = 0$.

Pick an irreducible complex analytic curve $C \subseteq X$ passing through $0$. \blue{By Lemma~\ref{lem:curve}, we have that $\pi_{C} (B(p, \varepsilon)) \ne \{0\}$, and thus $\pi_X (B(p, \varepsilon)) \ne \{0\}$.}
\end{proofof}

Since the singular locus of a complex analytic variety is a complex analytic subvariety of smaller dimension \cite[Chapter~0]{GriffithsHarris94}, we obtain the following corollary of Theorem~\ref{thm:cmplxapp}. Given its important implications in this article, we label it a theorem.
\begin{theorem}\label{thm:nonsing}
Let $X$ be a closed irreducible complex analytic variety in $V$. For a general $p \in V$, its best $X$-approximation $\pi_X (p)$ is a nonsingular point of $X$.
\end{theorem}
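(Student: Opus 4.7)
The plan is to derive this as an immediate corollary of Theorem~\ref{thm:cmplxapp} by taking $Z$ to be the singular locus of $X$. Concretely, I would set $Z \coloneqq \sing(X) = \{x \in X \mid X \text{ is not a complex manifold at } x\}$ and then invoke Theorem~\ref{thm:cmplxapp} verbatim.

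The only thing to check is that $Z$ satisfies the hypotheses of Theorem~\ref{thm:cmplxapp}, namely that $Z$ is a complex analytic subvariety of $X$ and that $\dim Z < \dim X$. Both facts are classical: if $X$ is locally cut out by holomorphic equations $f_1,\dots,f_k$, then $\sing(X)$ is locally cut out by the additional vanishing of the appropriate minors of the Jacobian $(\partial f_i/\partial z_j)$, so $\sing(X)$ is itself a complex analytic subvariety; moreover, since $X$ is irreducible, its smooth locus is a nonempty open dense subset, forcing $\dim \sing(X) < \dim X$. This is the content of the Griffiths--Harris reference quoted in the paper, so I would simply cite it rather than reprove it.

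With these two inputs in hand, Theorem~\ref{thm:cmplxapp} applied to $X$ and $Z = \sing(X)$ says that for a general $p \in V$, the unique best $X$-approximation $\pi_X(p)$ does not lie in $Z$, i.e., $\pi_X(p)$ is a nonsingular point of $X$. There is no real obstacle here; the entire content of the statement is packed into Theorem~\ref{thm:cmplxapp} together with the standard structure theorem for the singular locus of an irreducible complex analytic variety. The proof will consist of a single sentence pointing to both.
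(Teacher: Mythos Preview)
Your proposal is correct and matches the paper's own argument essentially verbatim: the paper also derives Theorem~\ref{thm:nonsing} as a one-line corollary of Theorem~\ref{thm:cmplxapp} by taking $Z = \sing(X)$ and citing \cite[Chapter~0]{GriffithsHarris94} for the fact that the singular locus of a complex analytic variety is a complex analytic subvariety of strictly smaller dimension.
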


\blue{In fact, the requirement in Theorem~\ref{thm:nonsing} that $X$ is irreducible can be relaxed, allowing for reducible varieties, and even constructible sets.
\begin{corollary}\label{cor:cmplxappred}
Let $X \subseteq V$ be a closed complex analytic variety, and $X = X_1 \cup \cdots \cup X_k$ be the decomposition of $X$ into irreducible components. Let $Z \subseteq X$ be any complex analytic subvariety such that $\dim Z < \dim X$ and $Z \cap X_j$ is not Zariski dense in $X_j$ for all $j=1,\dots, k$. Then for a general $p \in V$, its unique best $X$-approximation $\pi_X (p)$ does not lie in $Z$.
\end{corollary}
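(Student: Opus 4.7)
The plan is to reduce the reducible case to the irreducible case already settled by Theorem~\ref{thm:cmplxapp} by treating each component $X_j$ separately and exploiting the fact that whenever the best $X$-approximation of $p$ happens to land in $X_j$, it is automatically the best $X_j$-approximation of $p$ as well.

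First I would apply Lemma~\ref{lem:aeuni} to the closed subanalytic sets $X, X_1, \ldots, X_k \subseteq V$, with $V$ regarded as a real vector space of real dimension $2n$. This guarantees that $\mathcal{C}(X)$ and each $\mathcal{C}(X_j)$ are contained in subanalytic subsets of real dimension strictly less than $2n$, so for $p$ outside the finite union of these sets (together with $X$ itself) each of $\pi_X(p), \pi_{X_1}(p), \ldots, \pi_{X_k}(p)$ is well-defined and unique.

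Next, set $Z_j \coloneqq Z \cap X_j$. Each $Z_j$ is a closed complex analytic subvariety of the irreducible variety $X_j$, and the hypothesis that $Z_j$ is not Zariski dense in $X_j$, combined with irreducibility of $X_j$, forces $\dim Z_j < \dim X_j$. Theorem~\ref{thm:cmplxapp} applied to $(X_j, Z_j)$ then yields that for a general $p \in V$, $\pi_{X_j}(p) \notin Z_j$; that is, the exceptional locus
\[
N_j \coloneqq \bigl\{ p \in V \setminus (\mathcal{C}(X_j) \cup X_j) \bigm| \pi_{X_j}(p) \in Z_j \bigr\}
\]
is contained in a subanalytic subset of real dimension less than $2n$.

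Finally, I would combine these exceptional sets. For $p$ in the complement of the finite union $X \cup \mathcal{C}(X) \cup \bigcup_{j=1}^{k}\mathcal{C}(X_j) \cup \bigcup_{j=1}^{k} N_j$, which omits only a subanalytic set of real dimension less than $2n$, the point $\pi_X(p)$ lies in some component $X_j$. The elementary identity $\min_{q \in X}\|p-q\| = \min_{j}\min_{q \in X_j}\|p-q\|$ then forces $\pi_X(p)$ to also be a best $X_j$-approximation of $p$, so by uniqueness $\pi_X(p) = \pi_{X_j}(p) \notin Z_j$. Since $\pi_X(p) \in X_j$, this yields $\pi_X(p) \notin Z \cap X_j = Z_j$ and hence $\pi_X(p) \notin Z$, as required. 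I do not anticipate any serious obstacle: all the delicate local geometry is already absorbed into Theorem~\ref{thm:cmplxapp}, and what remains is a straightforward decomposition argument assembling finitely many exceptional sets of real dimension strictly less than $2n$.
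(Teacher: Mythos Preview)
Your proposal is correct and follows essentially the same approach as the paper, which simply says ``Apply Theorem~\ref{thm:cmplxapp} to each $X_j$ and $Z \cap X_j$.'' You have spelled out the details that make this one-liner work: that $\pi_X(p)$, when it lands in $X_j$, must coincide with $\pi_{X_j}(p)$, and that the finitely many exceptional loci assemble into a set of real dimension strictly less than $2n$.
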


\begin{proof}
Apply Theorem~\ref{thm:cmplxapp} to each $X_j$ and $Z \cap X_j$.
\end{proof}

\begin{corollary}\label{cor:imofpoly}
Let $f:X\rightarrow V$ be a morphism from a complex affine, possibly reducible, algebraic variety $X$ to a complex vector space $V$. Then for a general $p\in V$, its unique best $\overline{f(X)}$-approximation $\pi_{\overline{f(X)}}(p)$ lies in $f(X)$.   
\end{corollary}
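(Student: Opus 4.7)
The plan is to reduce the statement to Corollary~\ref{cor:cmplxappred} applied to the closed complex algebraic (hence analytic) subvariety $Y \coloneqq \overline{f(X)}$ of $V$, by taking for the ``bad'' set a closed subvariety $Z \subseteq Y$ that contains the set-theoretic difference $Y \setminus f(X)$. Once such a $Z$ is produced and shown to satisfy the hypotheses of Corollary~\ref{cor:cmplxappred}---namely $\dim Z < \dim Y$ together with $Z \cap Y_l$ not Zariski dense in each irreducible component $Y_l$ of $Y$---the corollary delivers, for general $p \in V$, a unique best $Y$-approximation $\pi_Y(p) \notin Z$, and in particular $\pi_Y(p) \in f(X)$, which is the desired conclusion.

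To build $Z$, first I would decompose $X = X_1 \cup \cdots \cup X_k$ into irreducible components and invoke Chevalley's theorem: each image $f(X_i)$ is a constructible subset of $V$, and since $X_i$ is irreducible so is $\overline{f(X_i)}$, so $f(X_i)$ contains a Zariski-open dense subset of $\overline{f(X_i)}$. Next, let $Y = Y_1 \cup \cdots \cup Y_m$ be the irreducible decomposition of $Y$; each $Y_l$ coincides with $\overline{f(X_{i_l})}$ for some $i_l$ (pick those $\overline{f(X_i)}$ that are maximal under inclusion). By the previous step, $f(X_{i_l})$ contains a Zariski-open dense subset $U_l \subseteq Y_l$, so $Y_l \setminus f(X) \subseteq Y_l \setminus U_l$, and $W_l \coloneqq Y_l \setminus U_l$ is a proper closed subvariety of $Y_l$.

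Setting $Z \coloneqq W_1 \cup \cdots \cup W_m$ then gives a closed subvariety of $Y$ containing $Y \setminus f(X)$, with $Z \cap Y_l \subseteq W_l$ not Zariski dense in $Y_l$, and $\dim Z < \dim Y$ by irreducibility of each $Y_l$. Applying Corollary~\ref{cor:cmplxappred} to the pair $(Y, Z)$ will then finish the proof. The only non-elementary input is Chevalley's theorem; the main subtlety I anticipate is the bookkeeping between irreducible components of $X$ and those of $Y$, since distinct $X_i$ may share the same image closure, or one closure may be contained in another, forcing us to choose a single representative $i_l$ for each component $Y_l$ rather than naively using all $X_i$.
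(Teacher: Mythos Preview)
Your approach is correct and matches the paper's: invoke Chevalley, produce a closed $Z \subseteq \overline{f(X)}$ containing $\overline{f(X)} \setminus f(X)$ that meets each irreducible component non-densely, then apply Corollary~\ref{cor:cmplxappred}; the paper simply takes $Z = \overline{\overline{f(X)} \setminus f(X)}$ and leaves to the reader the component-wise verification you spell out. One small fix: the containment $Z \cap Y_l \subseteq W_l$ need not hold (the $W_j$ with $j \neq l$ can meet $Y_l$), but since each $W_j \cap Y_l \subseteq Y_j \cap Y_l$ is a proper closed subset of the irreducible $Y_l$, the required non-density of $Z \cap Y_l$ in $Y_l$ still follows.
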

\begin{proof}
By Chevalley's theorem, $f(X)$ is a constructible set \cite[Theorem~3.16]{Harris95:gtm133}. We set $Z:=\overline{\overline{f(X)}\setminus f(X)}$ and apply Corollary~\ref{cor:cmplxappred}.
\end{proof}
}

\section{\blue{Applications to nonlinear approximations}}\label{sec:app1}

We will now apply the results in Section~\ref{sec:cplx} back to the functional approximation problems described in Section~\ref{sec:intro}. Since our results require that we work over $\CC$ (in fact, they are false over $\RR$), we will write $L^2(\Omega) = L^2_\CC(\Omega)$ in the following. 

We caution the reader that when applied to the best $r$-term approximation problem in \eqref{eq:main}, the set $X$ in Theorem~\ref{thm:nonsing} is \emph{not} the set of $r$-term approximant
\[
\Sigma^{\circ}_r(Y)  = \{f \in L^2(\Omega)  \mid f = f_1 + \dots + f_r \text{ for some } f_1, \dots, f_r \in Y\},
\]
but  the closure of this set, which we denoted by $\Sigma_r(Y)$. Since it is a closure, $X =\Sigma_r(Y)$ is automatically closed; and requiring it be semianalytic is a very mild condition. In fact, we are unaware of any nonpathological example where this does not hold. 
\begin{corollary}\label{cor:rterm}
Let $f \in L^2(\Omega)$ and $Y \subseteq L^2(\Omega) $.
If $\Sigma_r(Y)$ is semianalytic, then the best $r$-term approximation problem
\begin{equation}\label{eq:maininf}
\inf_{f_1,\dots,f_r \in Y} \lVert f - (f_1 + \dots + f_r) \rVert = \inf_{\varphi \in \Sigma^{\circ}_r(Y)} \lVert f - \varphi \rVert 
\end{equation}
has a solution with probability one, i.e., the infimum is attained by a point in $\Sigma^{\circ}_r(Y)$ with probability one. 
\end{corollary}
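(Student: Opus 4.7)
The plan is to invoke Corollary~\ref{cor:imofpoly} with the summation morphism $\mu \colon Y \times \dots \times Y \to L^2(\Omega)$, $(\varphi_1, \dots, \varphi_r) \mapsto \varphi_1 + \dots + \varphi_r$, whose image is by definition $\Sigma^{\circ}_r(Y)$ and whose Euclidean closure $\overline{\mu(Y^r)}$ is $\Sigma_r(Y)$. First I would observe that $\Sigma_r(Y)$ is closed by construction, so
\[
\inf_{\varphi \in \Sigma^{\circ}_r(Y)} \|f - \varphi\| = \inf_{\varphi \in \Sigma_r(Y)} \|f - \varphi\|
\]
and the right-hand infimum is attained. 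By Lemma~\ref{lem:aeuni}, for general $f \in L^2(\Omega)$ the minimizer $\pi_{\Sigma_r(Y)}(f)$ exists and is unique, so the claim of the corollary reduces to showing that this minimizer lies in $\Sigma^{\circ}_r(Y) = \mu(Y^r)$ rather than on the boundary $\Sigma_r(Y) \setminus \Sigma^{\circ}_r(Y)$.

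Next I would observe that in the settings of interest $Y$ is itself (the image of) a complex affine variety under a polynomial or holomorphic morphism --- for example, the Segre, Veronese, Segre--Veronese, or Grassmannian varieties that parameterize separable, symmetric, or alternating tensors --- so $Y^r$ is a complex affine variety and $\mu$ is a morphism. Corollary~\ref{cor:imofpoly} then applies on the nose: for general $f$, the unique best $\Sigma_r(Y)$-approximation $\pi_{\Sigma_r(Y)}(f)$ lies in $\mu(Y^r) = \Sigma^{\circ}_r(Y)$, giving a genuine $r$-term approximant that attains the infimum in \eqref{eq:maininf}. Since \emph{general} is stronger than \emph{with probability one}, this is the desired conclusion.

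The main obstacle I anticipate is reconciling the weak hypothesis actually written --- ``$\Sigma_r(Y)$ is semianalytic'' --- with the morphism-of-affine-varieties hypothesis required to invoke Corollary~\ref{cor:imofpoly}. In all of the concrete applications treated later this reconciliation is automatic, but in a purely subanalytic framework one would instead apply Corollary~\ref{cor:cmplxappred} with $X = \Sigma_r(Y)$ and $Z = \Sigma_r(Y) \setminus \Sigma^{\circ}_r(Y)$. The nontrivial point in that approach is to verify that $Z \cap X_j$ is not Zariski dense in each irreducible component $X_j$ of $\Sigma_r(Y)$: since $\Sigma^{\circ}_r(Y)$ is Euclidean-dense in $\Sigma_r(Y)$, its intersection with each irreducible $X_j$ is Euclidean-dense and hence Zariski dense in $X_j$, which forces $Z \cap X_j$ to sit inside a proper subvariety of the irreducible $X_j$ and therefore to have strictly smaller dimension. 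With that dimension inequality in hand, Corollary~\ref{cor:cmplxappred} delivers the same conclusion.
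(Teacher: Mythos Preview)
Your approach is correct and in fact somewhat more careful than the paper's own argument, but it proceeds along a different line. The paper argues existence via coercivity (the map $\varphi \mapsto \|f - \varphi\|$ has bounded sublevel sets on the closed set $\Sigma_r(Y)$), then invokes Theorem~\ref{thm:nonsing} to conclude that the minimizer $\varphi_*$ is a \emph{nonsingular} point of $\Sigma_r(Y)$, and finally asserts --- without further justification --- that nonsingular points of $\Sigma_r(Y)$ lie in $\Sigma^{\circ}_r(Y)$. That last implication is not automatic in general; it holds in the concrete examples of Section~\ref{sec:app2} but is not a general fact about closures of constructible sets.

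Your route via Corollary~\ref{cor:imofpoly} bypasses this issue cleanly in the algebraic setting, since that corollary is tailored precisely to the situation ``image of a morphism versus its closure.'' Your alternative via Corollary~\ref{cor:cmplxappred} is also sound; just take $Z$ to be the Zariski closure $\overline{\Sigma_r(Y) \setminus \Sigma^{\circ}_r(Y)}$ and use that $\Sigma^{\circ}_r(Y)$, being the image of a morphism, is constructible, so Euclidean density on each irreducible component forces Zariski density there and hence $\dim(Z \cap X_j) < \dim X_j$. Both your argument and the paper's tacitly require more than the stated hypothesis ``$\Sigma_r(Y)$ semianalytic'' --- they need $\Sigma_r(Y)$ to be a complex analytic variety --- so the mismatch you flagged is real and shared by the paper. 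One minor point: closedness of $\Sigma_r(Y)$ alone does not guarantee that the infimum is attained; you should also invoke coercivity of the distance function, as the paper does.
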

\begin{proof}
The infimum in \eqref{eq:maininf}  may not be attained over  $\Sigma^{\circ}_r(Y)$ because this is in general not a closed set. However, the infimum must always be attained in its closure $\Sigma_r(Y)$ even when this set is unbounded --- the reason being that for a fixed $f$, the map $\Sigma_r(Y) \to \mathbb{R}$, $ \varphi \to \lVert f - \varphi \rVert$ is a \emph{coercive} map \cite{LC} and has bounded sublevel sets. Now by Theorem~\ref{thm:nonsing}, the minimizer $\varphi_* \in \Sigma_r(Y)$ is a smooth point with probability one and therefore we  have $\varphi_* \in \Sigma^{\circ}_r(Y)$ with probability one.
\end{proof}

We will next apply our result to the join approximation problem
\begin{equation}\label{eq:genjoin}
\min_{f_{i1},\dots,f_{ir_i} \in Y_i} \lVert f - (f_{11} + \dots + f_{1r_1}) -  (f_{21} + \dots + f_{2r_2}) - \dots- (f_{k1} + \dots + f_{kr_k}) \rVert.
\end{equation}
Note that this is a generalization of \eqref{eq:join} to $k \ge 2$.

Let $X_1,\dots,X_k \subseteq L^2(\Omega)$ and define their \emph{join} as
\begin{equation}\label{eq:joinvar}
J (X_1, \dots, X_k) \coloneqq \{ \varphi_1 + \dots + \varphi_k \in L^2(\Omega) \mid \varphi_i \in X_i, \; i=1,\dots, k\}.
\end{equation}
We observe that \eqref{eq:genjoin} may be reexpressed in the form \eqref{eq:joininf} below with $X_i = \Sigma^\circ_{r_i}(Y_i)$, $i =1,\dots,k$.
\begin{corollary}\label{cor:joinapprox}
Let $f \in L^2(\Omega)$ and $Y_1,\dots, Y_k \subseteq L^2(\Omega) $.
If the join
\[
J\bigl(\Sigma_{r_1}(Y_1),\dots, \Sigma_{r_k}(Y_k)\bigr)
\]
is semianalytic, then the join approximation problem
\begin{equation}\label{eq:joininf}
\inf_{\varphi_1 \in \Sigma_{r_1}^\circ(Y_1),\dots,\varphi_k \in \Sigma_{r_k}^\circ(Y_k)} \lVert f - (\varphi_1 + \dots + \varphi_r) \rVert 
\end{equation}
has a solution with probability one, i.e., the infimum is attained by a point in $J\bigl(\Sigma^{\circ}_{r_1}(Y_1),\dots,\Sigma^{\circ}_{r_k}(Y_k)\bigr)$ with probability one. 
\end{corollary}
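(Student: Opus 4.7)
The plan is to mirror the proof of Corollary~\ref{cor:rterm}, applied to the join rather than to a single secant set. First I would observe that the infimum in \eqref{eq:joininf} over the (generally non-closed) set $J\bigl(\Sigma_{r_1}^\circ(Y_1),\dots,\Sigma_{r_k}^\circ(Y_k)\bigr)$ coincides with the infimum over its Euclidean closure, which is contained in the closed semianalytic set
\[
X \coloneqq \overline{J\bigl(\Sigma_{r_1}(Y_1),\dots,\Sigma_{r_k}(Y_k)\bigr)}.
\]
Because $\varphi \mapsto \lVert f - \varphi \rVert$ is coercive on the closed set $X$ --- its sublevel sets are bounded by the reverse triangle inequality --- the infimum over $X$ is attained by some $\varphi_* \in X$, which also realizes the infimum in \eqref{eq:joininf}.

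Next I would argue that $X$ is an irreducible closed complex analytic variety, so that Theorem~\ref{thm:nonsing} applies. Each $\Sigma_{r_i}(Y_i)$ is irreducible as the closure of the image of the irreducible $Y_i^{r_i}$ under summation; hence the product $\prod_i \Sigma_{r_i}(Y_i)$ is irreducible, and $X$, being the Euclidean closure of the image of this product under the total summation map $\prod_i \Sigma_{r_i}(Y_i) \to V$, remains irreducible. Theorem~\ref{thm:nonsing} then gives that, for a general $f$, the minimizer $\varphi_*$ is a nonsingular point of $X$. To refine this to membership in $J(\Sigma_{r_1}^\circ(Y_1),\dots,\Sigma_{r_k}^\circ(Y_k))$ itself (rather than merely in its closure), I would apply Corollary~\ref{cor:imofpoly} to the summation morphism
\[
\prod_{i=1}^k \Sigma^\circ_{r_i}(Y_i) \longrightarrow V,
\]
whose image is precisely $J(\Sigma_{r_1}^\circ(Y_1),\dots,\Sigma_{r_k}^\circ(Y_k))$; since this image is constructible and Zariski dense in $X$, its complement in $X$ has strictly smaller dimension, and so avoids the best $X$-approximant of a general $f$. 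Passing from ``general'' to ``with probability one'' is routine, since a proper subvariety has Lebesgue measure zero.

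The main obstacle I anticipate is verifying that $X$ is a genuine complex analytic variety (not merely a closed semianalytic set) and is irreducible in full generality. For the algebraic applications of Section~\ref{sec:app2} (tensor ranks, sparse-plus-low-rank, etc.) both properties are immediate from polynomial parametrizations of the $Y_i$'s, but a purely semianalytic $Y_i$ may require passage to an analytic closure of the summation map's image. Once this regularity is in place, the remainder of the argument is a direct transplant of the proof of Corollary~\ref{cor:rterm}, with Corollary~\ref{cor:imofpoly} supplying the extra step that promotes ``nonsingular in $X$'' to ``actually lies in the constructible image.''
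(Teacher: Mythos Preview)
Your plan follows the same skeleton as the paper's proof: identify the closed set $X = J\bigl(\Sigma_{r_1}(Y_1),\dots,\Sigma_{r_k}(Y_k)\bigr)$, use coercivity to guarantee a minimizer in $X$, and then invoke the results of Section~\ref{sec:cplx} to show that the minimizer for a general $f$ lands in the dense piece $J\bigl(\Sigma_{r_1}^\circ(Y_1),\dots,\Sigma_{r_k}^\circ(Y_k)\bigr)$. The paper is terser: it asserts that the summation map $J$ is a closed map (so that $X$ is already closed without any further closure), records $\overline{J\bigl(\Sigma_{r_1}^\circ,\dots,\Sigma_{r_k}^\circ\bigr)} = X$, and then applies Theorem~\ref{thm:nonsing} directly, without a separate step distinguishing ``nonsingular point of $X$'' from ``point of the open join.'' Your explicit appeal to Corollary~\ref{cor:imofpoly} to make that last passage rigorous is a genuine refinement --- nonsingular points of $X$ need not all lie in the constructible image, so what the paper leaves implicit you spell out. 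Your caution about verifying that $X$ is an irreducible complex analytic variety (rather than merely closed semianalytic) is likewise warranted; the paper glosses over this, and in practice it holds because every application in Section~\ref{sec:app2} comes with a polynomial parametrization of the $Y_i$'s.
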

\begin{proof}
Observe that $J (X_1, \dots, X_k) $ is an image of the map
\[
J : L^2(\Omega) \times \dots \times L^2(\Omega) \to L^2(\Omega) , \quad (x_1,\dots,x_k) \mapsto x_1 + \dots + x_k,
\]
as $J (X_1, \dots, X_k) = J(X_1 \times \dots \times X_k) $. Since $J$ is a closed map, $J (X_1, \dots, X_k) $   is a closed set if $X_1, \dots, X_k$ are closed sets. Also,
\[
\overline{J (X_1, \dots, X_k) } = J (\overline{X}_1, \dots, \overline{X}_k).
\]
and so
\[
\overline{ J\bigl(\Sigma_{r_1}^\circ(Y_1),\dots, \Sigma_{r_k}^\circ(Y_k)\bigr)} =  J\bigl(\Sigma_{r_1}(Y_1),\dots, \Sigma_{r_k}(Y_k)\bigr).
\]
Applying Theorem~\ref{thm:nonsing} to $X = J\bigl(\Sigma_{r_1}(Y_1),\dots, \Sigma_{r_k}(Y_k)\bigr)$ then yields the required result. 
\end{proof}

Again we stress that it is crucial in Corollaries~\ref{cor:rterm} and \ref{cor:joinapprox} that the functions be complex-valued, the results fail when we replace $\mathbb{C}$ by $\mathbb{R}$.
We will discuss specific cases of Corollaries~\ref{cor:rterm} and \ref{cor:joinapprox} in the next \blue{two sections}.

\section{Applications to separable approximations}\label{sec:app2}

We will now apply the results in Section~\ref{sec:cplx} to more specific classes of functional approximation problems. As in Section~\ref{sec:app1}, given that our results only hold over $\CC$, we write $L^2(\Omega) = L^2_\CC(\Omega)$ throughout this section. A point to recall is that
\[
L^2(\Omega_1 \times \dots \times \Omega_d) = L^2(\Omega_1) \otimes \dots \otimes L^2(\Omega_d)
\]
and so multivariate functions are in fact tensors. In many, if not most applications, the multivariate target function (i.e., the function to be approximated) possesses various forms of symmetries, the most common being \emph{full} invariance or skew-invariance under all permutations of arguments:
\[
f(x_{\tau(1)},\dots, x_{\tau(d)}) = f(x_1,\dots, x_d) \quad \text{or} \quad g(x_{\tau(1)},\dots, x_{\tau(d)}) = \operatorname{sgn}(\tau)  g(x_1,\dots, x_d),
\]
for all $\tau \in \mathfrak{S}_d$. In other words, $f$ is a symmetric tensor and $g$ an alternating tensor:
\[
f \in \mathsf{S}^d( L^2(\Omega) )\qquad \text{or} \qquad g \in \mathsf{\Lambda}^d(L^2(\Omega)).
\]
Note that in these cases, we need $\Omega_1 = \dots = \Omega_d = \Omega$ in order to permute arguments. There are many types of \emph{partial} symmetries as well. For example, we may have functions that satisfy
\begin{align*}
f(x_{\rho(1)},\dots, x_{\rho(d)}, y_{\tau(1)},\dots, y_{\tau(k)}) &= f(x_1,\dots, x_d, y_1,\dots,y_k), \\ g(x_{\rho(1)},\dots, x_{\rho(d)},y_{\tau(1)},\dots, y_{\tau(k)}) &= \operatorname{sgn}(\rho) \operatorname{sgn}(\tau)  g(x_1,\dots, x_d, y_1,\dots,y_k)
\end{align*}
for all $\rho \in \mathfrak{S}_d$ and $\tau \in \mathfrak{S}_k$. These correspond respectively to functions
\[
f \in \mathsf{S}^d( L^2(\Omega_1) ) \otimes \mathsf{S}^k( L^2(\Omega_2) ) \qquad \text{and} \qquad g \in \mathsf{\Lambda}^d(L^2(\Omega_1)) \otimes \mathsf{\Lambda}^k(L^2(\Omega_2)).
\]
There are also more intricate symmetries. For example, a complex-valued function $f(X)$ of a matrix variable $X = (x_{ij})_{i,j=1}^{k,d}$ that satisfies
\begin{multline*}
f(x_{\rho_1(1), \tau(1)},\dots, x_{\rho_1(k), \tau(1)}, \dots, x_{\rho_d(1), \tau(d)},\dots, x_{\rho_d(k), \tau(d)}) \\
= \operatorname{sgn}(\rho_1) \dots \operatorname{sgn}(\rho_d)  f(x_{1, 1},\dots, x_{k, 1}, \dots, x_{1, d},\dots, x_{k, d})
\end{multline*}
for all $\rho_1, \dots, \rho_d \in \mathfrak{S}_k$ and $\tau \in \mathfrak{S}_d$. This corresponds to $f \in  \mathsf{S}^d(\mathsf{\Lambda}^k( L^2(\Omega) ))$. We will see more examples later.

Each of these classes of functions comes with its own nonlinear $r$-term approximation problem that preserves the respective symmetries or skew-symmetries, full or partial. For  example, for $f \in L^2(\Omega \times \dots \times \Omega) $, we may  just want
\begin{equation}\label{eq:seg}
\inf \biggl\lVert f - \sum_{i=1}^r \varphi_{1,i} \otimes \dots \otimes \varphi_{d,i} \biggr\rVert,
\end{equation}
as discussed in Section~\ref{sec:intro}, but for a symmetric $f \in \mathsf{S}^d( L^2(\Omega) )$ or an alternating $g \in \mathsf{\Lambda}^d(L^2(\Omega))$, the more natural approximation problems are respectively
\begin{equation}\label{eq:chow-grass}
\inf \biggl\lVert f - \sum_{i=1}^r \varphi_{1,i} \circ \dots \circ \varphi_{d,i} \biggr\rVert \qquad \text{or} \qquad
\inf \biggl\lVert g - \sum_{i=1}^r \varphi_{1,i} \wedge \dots \wedge \varphi_{d,i} \biggr\rVert,
\end{equation}
with $\varphi_{j,i}  \in L^2(\Omega)$, $i=1,\dots,r$, $j=1,\dots,d$.  In fact, there are other natural options for a symmetric $f$, where we may instead seek an approximation of the form
\begin{equation}\label{eq:ver}
\inf \biggl\lVert f - \sum_{i=1}^r \varphi_{i}^{\otimes d} \biggr\rVert,
\end{equation}
with $\varphi_i  \in L^2(\Omega)$, $i=1,\dots,r$. The symbols $\circ$ and $\wedge$ denote  symmetric product and exterior product respectively; for those who have not encountered them, these and other notations will be defined in Section~\ref{sec:tensor}. Then in Section~\ref{sec:approx}, we will describe other classes of  $r$-term  approximation problems for functions possessing more complicated symmetries than the ones in \eqref{eq:seg}, \eqref{eq:chow-grass}, \eqref{eq:ver}. All these approximation problems have two features in common:
\begin{enumerate}[\upshape (i)]
\item\label{item:variety} Each of them is associated with a complex algebraic variety; e.g., \eqref{eq:seg} with the Segre variety, \eqref{eq:chow-grass} with the Chow variety and Grassmann variety respectively, \eqref{eq:ver} with the Veronese variety.

\item\label{item:no} All of them may fail to have a solution; e.g., there exist target functions where the infima in \eqref{eq:seg}, \eqref{eq:chow-grass}, \eqref{eq:ver} cannot be attained.
\end{enumerate}
We will later see how our results in Section~\ref{sec:cplx} combined with \eqref{item:variety} allow us to rectify \eqref{item:no} \blue{to the extent guaranteed by Theorem~\ref{thm:cmplxapp}, i.e., for any general $f$.}

We would like to mention a slightly different, perhaps more common, alternative for framing the function approximation problems above. As we described in Section~\ref{sec:intro}, for computational purposes, $L^2(\Omega_i)$'s are all finite-dimensional (e.g., when $\Omega_i$'s are all finite sets) and in this case, $L^2(\Omega_i) \cong \mathbb{C}^{n_i}$ with a choice of basis, and thus
\begin{align*}
L^2(\Omega_1 \times \Omega_2) &=   L^2(\Omega_1) \otimes L^2(\Omega_2) \cong \mathbb{C}^{n_1} \otimes \mathbb{C}^{n_2} \cong \mathbb{C}^{n_1 \times n_2,}\\
L^2(\Omega_1 \times \Omega_2\times \Omega_3) &=   L^2(\Omega_1) \otimes L^2(\Omega_2)\otimes L^2(\Omega_3) \cong \mathbb{C}^{n_1} \otimes \mathbb{C}^{n_2} \otimes \mathbb{C}^{n_3} \cong \mathbb{C}^{n_1 \times n_2 \times n_3},\\
\shortvdotswithin{=}
L^2(\Omega_1 \times \dots \times \Omega_d) &=  L^2(\Omega_1) \otimes\dots \otimes L^2(\Omega_d) \cong \mathbb{C}^{n_1} \otimes \dots \otimes \mathbb{C}^{n_d}  \cong \mathbb{C}^{n_1 \times \dots \times n_d}.
\end{align*}
The set $\mathbb{C}^{n_1 \times \dots \times n_d}$ denotes the vector space of $d$-dimensional hypermatrices 
\[
(a_{i_1,\dots, i_d})_{i_1,\dots, i_d= 1}^{n_1,\dots, n_d}
\]
(when $d =2$, this reduces to a usual matrix in linear algebra), which of course is nothing more than a convenient way to represent a function $f : \Omega_1\times \dots \times \Omega_d \to \mathbb{C}$ by storing its value
\[
f(i_1,\dots, i_d) = a_{i_1,\dots, i_d}
\]
at $(i_1,\dots, i_d) \in \Omega_1\times \dots \times \Omega_d$, assuming that $\Omega_i$'s are all finite sets.  In fact, practitioners in computational  mathematics overwhelmingly regard a tensor as such a hypermatrix. The symmetries described earlier carry verbatim to hypermatrices with the permutations acting on the indices. 

We may either frame our results in this section in the form of function approximations or in the form of  tensor approximations (or more accurately, matrix/hypermatrix approximations), but instead of favoring one over the other, we would simply resort to stating them for abstract vector spaces. So implicitly, $V = L^2(\Omega_1 \times \dots \times \Omega_d)$ for function approximations and $V = \mathbb{C}^{n_1 \times \dots \times n_d}$ for tensor approximations.

\subsection{Segre/Veronese/Grassmann varieties and friends}\label{sec:tensor}

We begin by reviewing some basic tensor constructions. The varieties that we define will be subsets of tensor spaces constructed via one or more of the following ways. We write $V_1 \otimes \dots \otimes V_d$  for the tensor product of  vector spaces $V_1, \dots, V_d$. A rank-one tensor is a nonzero decomposable tensor $v_1 \otimes \dots \otimes v_d$, where $v_i \in V_i$.  When $V_1 = \dots = V_d = V$, we use the abbreviations $V^{\otimes d} = V\otimes \dots \otimes V$ and $v^{\otimes d} = v \otimes \dots \otimes v$. In this case, the symmetric group $\mathfrak{S}_d$  acts on rank-one tensors by
\[
\tau(v_1 \otimes \dots \otimes v_d) = v_{\tau(1)} \otimes \dots \otimes v_{\tau(d)},
\]
and this action extends linearly to an action on $V^{\otimes d}$. The subspaces
\begin{align*}
\mathsf{S}^d(V) &= \{T \in V^{\otimes d} \mid \tau(T) = T \text{ for all } \tau \in \mathfrak{S}_d\}, \\
\mathsf{\Lambda}^d(V) &= \{T \in V^{\otimes d} \mid \tau(T) = \operatorname{sgn}(\tau) T \text{ for all } \tau \in \mathfrak{S}_d\},
\end{align*}
are called the spaces of \emph{symmetric $d$-tensors} and \emph{alternating $d$-tensors} respectively. We may construct tensor spaces with more intricate symmetries and skew-symmetries by combining these, e.g., $\mathsf{S}^{d}(V) \otimes \mathsf{S}^{k}(V)$,  $\mathsf{\Lambda}^{d}(V)  \otimes \mathsf{\Lambda}^{k}(V)$, $\mathsf{S}^d(\mathsf{\Lambda}^k(V))$, $\mathsf{S}^d(\mathsf{S}^k(V))$, etc.

The \emph{symmetric  product} and \emph{alternating product} of $v_1,\dots,v_d \in V$ are respectively defined by
\begin{align*}
v_1 \circ \dots \circ v_d &\coloneqq \frac{1}{d!} \sum_{\tau \in \mathfrak{S}_d} v_{\tau(1)} \otimes \dots \otimes v_{\tau(d)} \in \mathsf{S}^d(V),\\
v_1 \wedge \dots \wedge v_d &\coloneqq \frac{1}{d!} \sum_{\tau \in \mathfrak{S}_d}  \operatorname{sgn}(\tau)  v_{\tau(1)} \otimes \dots \otimes v_{\tau(d)} \in \mathsf{\Lambda}^d(V).
\end{align*}

In finite dimensions, each of the approximation problems we have encountered  earlier and yet others that we will see in Section~\ref{sec:approx} comes with an associated complex algebraic variety. These varieties are usually defined as complex projective varieties, i.e., subsets of projective spaces defined by the common zero loci of a finite collection of homogeneous polynomials, and we will not deviate from this standard practice in our definitions below:

\paragraph{Segre variety} This is the image $\sigma(\PP V_1 \times \dots \times \PP V_d)$ of the Segre embedding:
\[
\sigma \colon \PP V_1 \times \dots \times \PP V_d \to \PP (V_1 \otimes \dots \otimes V_d), \quad
([v_1], \dots, [v_d]) \mapsto [v_1 \otimes \dots \otimes v_d].
\]

\paragraph{Veronese variety}  This is the image $\nu (\PP V)$ of the Veronese embedding:
\[
\nu \colon \PP V \to \PP \mathsf{S}^d(V), \quad
[v] \mapsto [v^{\otimes d}].
\]

\paragraph{Chow variety}  This is the image\footnote{This is usually called the \emph{Chow variety of zero cycles} in $\PP V^*$. It is a subvariety of $\PP \mathsf{S}^d(V)$ whose affine cone is the set of degree-$d$ homogeneous polynomials on the dual space $V^*$ that can be decomposed into a product of linear forms \cite[Chapter~4, Proposition~2.1]{GKZ94}.} $\Ch_d(V) \coloneqq \kappa \bigl((\PP V)^d\bigr)$ of the Chow map:
\[
\kappa \colon \PP V \times \dots \times \PP V  \to \PP \mathsf{S}^d(V), \quad
([v_1], \dots,[v_d]) \mapsto [v_1 \circ \dots \circ v_d] .
\]

\paragraph{Grassmann variety} This is the image  $\psi\bigl(\Gr_k(V)\bigr)$ of the \emph{Grassmannian} $\Gr_k(V)$, the set of $k$-dimensional linear subspaces of $V$, under the \emph{Pl\"ucker embedding}:
\[
\psi \colon \Gr_k(V) \to \PP \mathsf{\Lambda}^k(V), \quad \spn\{ v_1, \dots, v_k \}  \mapsto [v_1 \wedge \dots \wedge v_k].
\]

\paragraph{Segre--Veronese variety} This is the image  $\sigma\nu (\PP V_1 \times \dots \times \PP V_m)$ of the Segre--Veronese embedding:
\begin{align*}
\sigma\nu \colon \PP V_1 \times \dots \times \PP V_m &\to \PP \bigl(\mathsf{S}^{d_1}(V) \otimes \dots \otimes \mathsf{S}^{d_m}(V)\bigr), \\
([v_1], \dots, [v_m]) &\mapsto [v_1^{\otimes d_1} \otimes \dots \otimes v_m^{\otimes d_m}].
\end{align*}

\paragraph{Segre--Chow variety} This is the image $\sigma\kappa \bigl(\Ch_{d_1}(V) \times \dots \times \Ch_{d_m}(V)\bigr)$  of the Segre--Chow map:
\begin{align*}
\sigma\kappa \colon \Ch_{d_1}(V) \times \dots \times \Ch_{d_m}(V) &\to  \PP \bigl(\mathsf{S}^{d_1}(V) \otimes \dots \otimes \mathsf{S}^{d_m}(V)\bigr), \\
([v_{1,1} \circ \dots \circ v_{d_1,1}], \dots, [v_{1,m} \circ \dots \circ v_{d_m,m}]) &\mapsto\\
[(v_{1,1} \circ \dots \circ v_{d_1,1}) &\otimes \dots \otimes (v_{1,m} \circ \dots \circ v_{d_m,m})].
\end{align*}

\paragraph{Segre--Grassmann variety} This is the image  $\sigma\psi \bigl(\Gr_{k_1}(V) \times \dots \times \Gr_{k_m}(V)\bigr)$  of the Segre--Grassmann map:
\begin{align*}
\sigma\psi \colon \Gr_{k_1}(V) \times \dots \times \Gr_{k_m}(V) &\to \PP \bigl(\mathsf{\Lambda}^{k_1}(V) \otimes \dots \otimes \mathsf{\Lambda}^{k_m}(V)\bigr), \\
([v_{1,1} \wedge \dots \wedge v_{k_1,1}], \dots, [v_{1,m} \wedge \dots \wedge v_{k_m,m}]) &\mapsto \\
[(v_{1,1} \wedge \dots \wedge v_{k_1,1}) &\otimes \dots \otimes (v_{1,m} \wedge \dots \wedge v_{k_m,m})].
\end{align*}

\paragraph{Veronese--Chow variety} This is the image $\nu\kappa \bigl(\Ch_d(V)\bigr)$  of the Veronese--Chow map:
\begin{align*}
\nu\kappa \colon \Ch_d(V) \to \PP \bigl(\mathsf{S}^k(\mathsf{S}^{d}(V))\bigr), \quad
[v_{1} \circ \dots \circ v_{d}] \mapsto [(v_{1} \circ \dots \circ v_{d})^{\otimes k}].
\end{align*}

\paragraph{Veronese--Grassmann variety} This is the image $\nu\psi\bigl(\Gr_k(V)\bigr)$ of the Veronese--Grassmann map:
\begin{align*}
\nu\psi \colon \Gr_k(V) \to \PP \bigl(\mathsf{S}^d(\mathsf{\Lambda}^{k}(V))\bigr), \quad
[v_{1} \wedge \dots \wedge v_{k}] \mapsto [(v_{1} \wedge \dots \wedge v_{k})^{\otimes d}].
\end{align*}

\paragraph{Segre--Veronese--Chow variety} This is the image $\sigma\nu\kappa\bigl(\Ch_{d_1}(V) \times \dots \times \Ch_{d_m}(V)\bigr)$ of the Segre--Veronese--Chow map:
\begin{align*}
\sigma\nu\kappa \colon \Ch_{d_1}(V) \times \dots \times \Ch_{d_m}(V) \to \PP \bigl(\mathsf{S}^{k_1} &(\mathsf{S}^{d_1}(V)) \otimes \dots \otimes \mathsf{S}^{k_m}(\mathsf{S}^{d_m}(V))\bigr), \\
([v_{1,1} \circ \dots \circ v_{d_1,1}], \dots, [v_{1,m} \circ \dots \circ v_{d_m,m}]) &\mapsto  \\
[(v_{1,1} \circ \dots \circ v_{d_1,1})^{\otimes k_1} &\otimes \dots \otimes (v_{1,m} \circ \dots \circ v_{d_m,m})^{\otimes k_m}].
\end{align*}

\paragraph{Segre--Veronese--Grassmann variety} This is $\sigma\nu\psi\bigl(\Gr_{k_1}(V) \times \dots \times \Gr_{k_m}(V)\bigr)$,  image of the Segre--Veronese--Grassmann map:
\begin{align*}
\sigma\nu\psi \colon \Gr_{k_1}(V) \times \dots \times \Gr_{k_m}(V) \to \PP \bigl(\mathsf{S}^{d_1}&(\mathsf{\Lambda}^{k_1}(V)) \otimes \dots \otimes \mathsf{S}^{d_m}(\mathsf{\Lambda}^{k_m}(V))\bigr), \\
([v_{1,1} \wedge \dots \wedge v_{k_1,1}], \dots, [v_{1,m} \wedge \dots \wedge v_{k_m,m}]) &\mapsto  \\
[(v_{1,1} \wedge \dots \wedge v_{k_1,1})^{\otimes d_1} &\otimes  \dots \otimes (v_{1,m} \wedge \dots \wedge v_{k_m,m})^{\otimes d_m}].
\end{align*}

Note that  $\sigma\nu, \sigma\kappa, \sigma\psi, \nu\kappa, \nu\psi, \sigma\nu\kappa, \sigma\nu\psi$ are all compositions of their respective constituent maps. 
In fact we may use successive compositions of the embeddings $\sigma$ and $\nu$ to define more complicated algebraic varieties of the same nature; it is not possible to exhaust all such constructions. We refer the readers to \cite{GKZ94, Harris95:gtm133, Landsberg12} for basic properties of the Segre, Veronese, Grassmann, and Chow varieties from an algebraic geometric perspective, although none of which would be required for our subsequent discussions.

While we have defined all these varieties as projective varieties, it is important to note that for approximation problems, we will need to work in vector spaces rather than projective spaces. As such, when we discuss these varieties in the context of approximations, we would in fact be referring to their \emph{affine cones}: For any $X \subseteq \PP V$, its affine cone  is $\widehat{X} \coloneqq \pi^{-1}(X) \cup \{0\}$, where $\pi \colon V \setminus \{0\} \to \PP V$ is the quotient map taking a vector space $V$ onto its projective space $\PP V$. We will write $[v] \coloneqq \pi(v)$ for the projective equivalence class of $v \in V \setminus \{0\}$. 

\subsection{Best low-rank approximations of tensors}\label{sec:approx}

A variety $X \subseteq \PP V$ is said to be \emph{nondegenerate} if $X$ is not contained in any hyperplane. An implication is that its affine cone $\widehat{X}$ would span $V$, i.e., $\spn(\widehat{X}) = V$, and so any $p \in V$ can be expressed as a linear combination $p = \alpha_1 x_1 + \dots + \alpha_r x_r$ with $x_1,\dots,x_r \in \widehat{X}$. Since, by the definition of affine cone, $x \in \widehat{X}$ iff $\alpha x \in \widehat{X}$ for any $\alpha \ne 0$, we may in fact replace the linear combination by a sum, i.e., every $p \in V$ may be expressed as $p = x_1 + \dots + x_r$ for some $x_1,\dots,x_r \in \widehat{X}$.

Given an irreducible nondegenerate complex projective variety $X \subseteq \PP V$, the $X$-rank of a nonzero $ p \in V$ is defined as
\[
\rank_X (p) \coloneqq \min \{r \in \mathbb{N} : p = x_1 + \dots + x_r, \; x_i \in \widehat{X}\},
\]
and $\rank_X(0) \coloneqq 0$.  The \emph{$X$-border rank} of $p$, denoted by $\brank_X (p)$, is the minimum integer $r$ such that $p$ is a limit of a sequence of $X$-rank-$r$ points. For  irreducible nondegenerate complex projective varieties $X_1, \dots, X_r \subseteq \PP V$, the \emph{join map}  is defined by
\[
J: \widehat{X}_1 \times \cdots \times \widehat{X}_r \to V, \quad (x_1, \dots, x_r) \mapsto x_1 + \cdots + x_r.
\]
The Euclidean closure of the image $J(\widehat{X}_1 \times \cdots \times \widehat{X}_r)$ in $V$ is called the \emph{join variety} of $X_1, \dots, X_r$. In particular, when $X_1 = \cdots = X_r = X$, we denote the image of $J$  by $\Sigma_r^{\circ} (X)$, and its Euclidean closure by $\Sigma_r (X)$, often called the $r$-\emph{secant variety} of $X$.  Note that
\[
\Sigma^\circ_r(X) = \{ p \in V : \rank_X (p) \le r \}\qquad\text{and}\qquad
\Sigma_r(X) = \{ p \in V : \brank_X (p) \le r \}.
\]
A notion closely related to secant varieties is that of a \emph{tangent variety}, defined for a nonsingular projective variety, or more generally, for any projective variety as
\[
\tau (X) \coloneqq \bigcup_{x \in X} \widehat{\operatorname{T}}_x (X)\qquad
\text{or} \qquad
\tau (X) \coloneqq \bigcup_{x \in X\setminus X_{\sing}} \widehat{\operatorname{T}}_x (X)
\]
respectively. Here $\widehat{\operatorname{T}}_x (X)$ is the affine tangent space of $X$ at $x$ \cite[Section~8.1.1]{Landsberg12} and $X_{\sing}$ is the \emph{singular locus}, i.e., the subvariety of singular points in $X$, which has measure zero (in fact, positive codimension). When $X$ is nonsingular, $\tau (X)$ is an algebraic variety. When $X$ is singular, $\tau (X)$ is a quasiaffine variety. By abusing terminologies slightly, we call $\tau (X)$ a tangent variety in both cases. Among the varieties in Section~\ref{sec:tensor} that we are interested in, the Chow variety $\Ch_d(V)$ is singular when $\dim V > 2$ and thus so are the Segre--Chow, Veronese--Chow, Segre--Veronese--Chow varieties; the rest are all nonsingular varieties.

In fact, for each of the varieties listed in  Section~\ref{sec:tensor}, an element  $A \in \tau(X)$ has a  \emph{normal form} that is either well-known or straightforward to determine, i.e., we may write down an expression for an arbitrary  $A \in \tau(X)$.\footnote{In general this is not possible for higher order tensors, i.e., we do not have an analogue of Jordan normal form for \emph{all} $d$-tensors when $d > 2$; but in our case, we are restricting to a very small subset, namely, only the tensors in $\tau(X)$.}  
\begin{proposition}\label{prop:normal}
As each variety $X$ in  Section~\ref{sec:tensor} is defined by a map, we will denote a  tensor in the tangent space of that variety with a subscript given by the respective map. With this notation, the following is a list of normal forms for $\tau(X)$:
\begingroup
\allowdisplaybreaks
\begin{align*}
A_\sigma &= \sum_{i = 1}^d v_1 \otimes \cdots \otimes v_{i-1} \otimes w_i \otimes v_{i+1} \otimes \cdots \otimes v_d,\\ 
A_\nu &= v^{\circ (d-1)} \circ w,\\ 
A_\kappa &= \sum_{i = 1}^d v_1 \circ \cdots \circ v_{i-1} \circ w_i \circ v_{i+1} \circ \cdots \circ v_d,\\
A_\psi &= \sum_{i = 1}^k v_1 \wedge \cdots \wedge v_{i-1} \wedge w_i \wedge v_{i+1} \wedge \cdots \wedge v_k, \\
A_{\sigma\nu} &= \sum_{i = 1}^m v_1^{\otimes d_1} \otimes \cdots \otimes v^{\otimes d_{i-1}}_{i-1} \otimes (v^{\circ (d_i-1)}_i \circ w_i) \otimes v^{\otimes d_{i+1}}_{i+1} \otimes \cdots \otimes v^{\otimes d_m}_m,\\
A_{\sigma\kappa} &= \sum_{i=1}^m (v_{1,1} \circ \dots \circ v_{d_1,1})\otimes \cdots \otimes (v_{1,i-1} \circ \dots \circ v_{d_{i-1},i-1}) \\
&\qquad\otimes \biggl(\sum_{j = 1}^{d_i} v_{1, i} \circ \cdots \circ v_{j-1, i} \circ w_{j, i} \circ v_{j+1, i} \circ \cdots \circ v_{d_i, i} \biggr) \\
&\qquad\qquad\otimes (v_{1,i+1} \circ \dots \circ v_{d_{i+1},i+1}) \otimes \dots \otimes (v_{1,m} \circ \dots \circ v_{d_m,m}), \\
A_{\sigma\psi} &= \sum_{i=1}^m (v_{1,1} \wedge \dots \wedge v_{k_1,1})\otimes \cdots \otimes (v_{1,i-1} \wedge \dots \wedge v_{k_{i-1},i-1}) \\
&\qquad\otimes \biggl(\sum_{j = 1}^{k_i} v_{1, i} \wedge \cdots \wedge v_{j-1, i} \wedge w_{j, i} \wedge v_{j+1, i} \wedge \cdots \wedge v_{k_i, i}\biggr) \\
&\qquad\qquad\otimes (v_{1,i+1} \wedge \dots \wedge v_{k_{i+1},i+1}) \otimes \dots \otimes (v_{1,m} \wedge \dots \wedge v_{k_m,m}),\\
A_{\nu\kappa} &= (v_1 \circ \cdots \circ v_d)^{\circ (k-1)} \circ \biggl(\sum_{i = 1}^d v_1 \circ \cdots \circ v_{i-1} \circ w_i \circ v_{i+1} \circ \cdots \circ v_d\biggr),\\
A_{\nu\psi} &= (v_1 \wedge \cdots \wedge v_k)^{\circ (d-1)} \circ \biggl(\sum_{i = 1}^k v_1 \wedge \cdots \wedge v_{i-1} \wedge w_i \wedge v_{i+1} \wedge \cdots \wedge v_k\biggr),
\end{align*}
\begin{align*}
&A_{\sigma\nu\kappa} = \sum_{i=1}^m (v_{1,1} \circ \dots \circ v_{d_1,1})^{\otimes k_1} \otimes \cdots \otimes (v_{1,i-1} \circ \dots \circ v_{d_{i-1},i-1})^{\otimes k_{i-1}} \otimes \\
&\biggl[(v_{1,i} \circ \cdots \circ v_{d_i,i})^{\circ (k_i-1)} \circ \biggl(\sum_{j = 1}^{d_i} v_{1, i} \circ \cdots \circ v_{j-1,i} \circ w_{j,i} \circ v_{j+1,i} \circ \cdots \circ v_{d_i,i}\biggr)\biggr] \\
&\otimes (v_{1,i+1} \circ \dots \circ v_{d_{i+1},i+1})^{\otimes k_{i+1}} \otimes \dots \otimes (v_{1,m} \circ \dots \circ v_{d_m,m})^{\otimes k_m},\\
&A_{\sigma\nu\psi} = \sum_{i=1}^m (v_{1,1} \wedge \dots \wedge v_{k_1,1})^{\otimes d_1} \otimes \cdots \otimes (v_{1,i-1} \wedge \dots \wedge v_{k_{i-1},i-1})^{\otimes d_{i-1}} \otimes\\
&\biggl[(v_{1,i} \wedge \cdots \wedge v_{k_i,i})^{\circ (d_i-1)} \circ \biggl(\sum_{j = 1}^{k_i} v_{1, i} \wedge \cdots \wedge v_{j-1,i} \wedge w_{j,i} \wedge v_{j+1,i} \wedge \cdots \wedge v_{k_i,i}\biggr)\biggr] \\
&\qquad\otimes (v_{1,i+1} \wedge \dots \wedge v_{k_{i+1},i+1})^{\otimes d_{i+1}} \otimes \dots \otimes (v_{1,m} \wedge \dots \wedge v_{k_m,m})^{\otimes d_m}.
\end{align*}
\endgroup
\end{proposition}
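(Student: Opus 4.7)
The plan is to compute, for each variety $X$ in Section~\ref{sec:tensor}, the affine tangent space $\widehat{\operatorname{T}}_x X$ at a smooth point $x \in X$ by differentiating the parametrizing map, and then read off a normal form for an arbitrary $A \in \tau(X)$. By definition every such $A$ equals $d\widehat\phi_m(\mathbf{w})$ for some tangent vector $\mathbf{w}$ at some smooth $m$, where $\widehat\phi$ denotes the affine (de-projectivized) version of the parametrization $\phi$ defining $X$. The computation reduces in every case to the Leibniz rule for the relevant product ($\otimes$, $\circ$, or $\wedge$), combined with the chain rule for the composite maps.

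For the basic building blocks I would take $\widehat\sigma(v_1,\dots,v_d) = v_1 \otimes \dots \otimes v_d$, $\widehat\nu(v) = v^{\otimes d}$, $\widehat\kappa(v_1,\dots,v_d) = v_1 \circ \dots \circ v_d$, and (on a local chart of $\Gr_k(V)$ given by linearly independent $k$-tuples) $\widehat\psi(v_1,\dots,v_k) = v_1 \wedge \dots \wedge v_k$. The product rule yields $d\widehat\sigma_{(v_1,\dots,v_d)}(w_1,\dots,w_d) = \sum_{i=1}^d v_1 \otimes \dots \otimes w_i \otimes \dots \otimes v_d$, $d\widehat\nu_v(w) = d\cdot v^{\circ(d-1)} \circ w$, and the analogous expressions for $\widehat\kappa$ and $\widehat\psi$. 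After absorbing combinatorial constants (such as the factor of $d$) into the free parameter, I obtain precisely $A_\sigma$, $A_\nu$, $A_\kappa$, and $A_\psi$.

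For the composite varieties, the chain rule reduces matters to the single-variety cases already handled. For instance, $\widehat{\sigma\nu}$ factors as $\widehat\sigma \circ (\widehat\nu_{d_1} \times \dots \times \widehat\nu_{d_m})$, so differentiating in the $i$-th factor and invoking the single-factor Veronese computation gives $v_1^{\otimes d_1} \otimes \dots \otimes (v_i^{\circ(d_i-1)} \circ w_i) \otimes \dots \otimes v_m^{\otimes d_m}$; summing over $i$ produces $A_{\sigma\nu}$. Analogous factorizations handle $\widehat{\sigma\kappa}$, $\widehat{\sigma\psi}$, $\widehat{\nu\kappa}$, $\widehat{\nu\psi}$, and for the triple composites $\widehat{\sigma\nu\kappa}$ and $\widehat{\sigma\nu\psi}$ one applies the chain rule twice, replacing each inner factor by its own tangent expression while holding the remaining factors fixed.

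The hard part will be purely notational: carefully bookkeeping the three layers of indices in the triple composites $A_{\sigma\nu\kappa}$ and $A_{\sigma\nu\psi}$. I also need to verify that a general parametrizing tuple (with linearly independent constituents in the Chow and Grassmann cases, to avoid the respective singular loci) maps to a smooth point of $X$, so that the image of the differential genuinely coincides with $\widehat{\operatorname{T}}_x X$ and the computed expressions are bona fide normal forms for elements of $\tau(X)$ in the sense of Section~\ref{sec:approx}.
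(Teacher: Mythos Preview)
Your proposal is correct and takes essentially the same approach as the paper: the paper parametrizes a curve $\gamma(t)$ in $\widehat{X}$ through a nonsingular point and computes $A = \gamma'(0)$ via the product rule, which is exactly your differential $d\widehat\phi_m(\mathbf{w})$ written in curve form, and the paper's expanded computation for the general composite case is your chain-rule reduction spelled out. Your remarks on absorbing combinatorial constants into the free $w$-parameters and on checking smoothness of the base point are implicit in the paper but worth making explicit as you do.
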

\begin{proof}
Let $X$ be one of the varieties in Section~\ref{sec:tensor}. Then any $A \in \tau(X)$ is of the form
\[
A = \lim_{t \to 0} \frac{\gamma(t) - \gamma(0)}{t},
\]
where $\gamma$ is any complex analytic curve $\gamma(t) \subseteq \widehat{X} \setminus \{0\}$ such that $\gamma(t)$ is nonconstant around $0$ and $\gamma(0)$ is a nonsingular point in $\widehat{X}$. In fact, such a curve $\gamma(t)$ can be written down explicitly for any $X$ in Section~\ref{sec:tensor}. Let $\ast$ denote either $\circ$ or $\wedge$. Then any point in $\widehat{X} \setminus \{0\}$ takes the form
\[
(v_{1,1} \ast \cdots \ast v_{k_1, 1})^{\otimes d_1} \otimes \cdots \otimes (v_{1,m} \ast \cdots \ast v_{k_m, m})^{\otimes d_m},
\]
and any curve $\gamma(t) \subset \widehat{X} \setminus \{0\}$ is of the form 
\[
\gamma(t) = (v_{1,1} (t) \ast \cdots \ast v_{k_1, 1} (t))^{\otimes d_1} \otimes \cdots \otimes (v_{1,m} (t) \ast \cdots \ast v_{k_m, m} (t))^{\otimes d_m},
\]
where $v_{j, i} (t) \in V_j$ is a complex analytic curve with $v_{j, i} (0) = v_{j, i}$. Thus
\begin{align*}
&A = \lim_{t \to 0} \frac{\gamma(t) - \gamma(0)}{t} \\
&=\frac{d}{d t} (v_{1,1} (t) \ast \cdots \ast v_{k_1, 1} (t))^{\otimes d_1}\Bigr\rvert_{t=0}  \otimes (v_{1,2} \ast \cdots \ast v_{k_2, 2})^{\otimes d_2} \otimes \cdots \\
&\qquad \otimes  (v_{1,m} \ast \cdots \ast v_{k_m, m})^{\otimes d_m} + \cdots + (v_{1,1} \ast \cdots \ast v_{k_1, 1})^{\otimes d_1} \otimes \cdots  \\
&\qquad \otimes (v_{1,m-1} \ast \cdots \ast v_{k_{m-1}, m-1})^{\otimes d_{m-1}} \otimes \frac{d}{d t}  (v_{1,m} (t) \ast \cdots \ast v_{k_m, m} (t))^{\otimes d_m} \Bigr\rvert_{t=0} \\
&= \sum_{i = 1}^m (v_{1,1} \ast \dots \ast v_{k_1,1})^{\otimes d_1} \otimes \cdots \otimes (v_{1,i-1} \ast \dots \ast v_{k_{i-1},i-1})^{\otimes d_{i-1}}  \\
&\otimes \biggl[(v_{1,i} \ast \cdots \ast v_{k_i,i})^{\circ (d_i-1)} \circ \biggl(\sum_{j = 1}^{k_i} v_{1, i} \ast \cdots \ast v_{j-1,i} \ast w_{j,i} \ast v_{j+1,i} \ast \cdots \ast v_{k_i,i}\biggr)\biggr] \\
&\otimes (v_{1,i+1} \ast \dots \ast v_{k_{i+1},i+1})^{\otimes d_{i+1}} \otimes \dots \otimes (v_{1,m} \ast \dots \ast v_{k_m,m})^{\otimes d_m}
\end{align*}
for some $w_{j, i} \in V_j$. 
\end{proof}

The simplest examples with $\brank_X (A) \ne \rank_X (A)$ may be found in $\tau (X)$. Note that 
since every tangent is a limit of $2$-secants, we clearly have $\tau (X) \subseteq \Sigma_2 (X)$; but in general $\tau (X) \nsubseteq \Sigma_2^\circ (X)$. The next proposition gives sufficient conditions for a tensor $A$ in Proposition~\ref{prop:normal} so that $A \in \tau(X) \setminus \Sigma^\circ_2 (X)$, i.e.,
\[
\brank_X (A) = 2 < \rank_X (A).
\]
As each variety $X$ in  Section~\ref{sec:tensor} is defined by a map, we will denote the $X$-rank and border $X$-rank of a variety with a subscript given by the respective map.
\begin{proposition}\label{prop:rank}
Let $d,k,m \ge 3$. The tensors in $\tau(X)$ for any $X$ that is one of the varieties in Section~\ref{sec:tensor} have $X$-border-rank two and $X$-rank strictly greater than two given the respective sufficient conditions:
\begin{enumerate}[\upshape (i)]
\item If  $\{v_i,w_i\}$ is linearly independent for $i=1,\dots,d$, then 
\[\brank_\sigma (A_\sigma) = 2 < \rank_\sigma (A_\sigma).\]

\item If $\{v, w\}$ is linearly independent, then
\[\brank_\nu(A_\nu) = 2 < \rank_\nu(A_\nu).\]

\item If $[v_1], \dots, [v_d], [w_1], \dots, [w_d]$  are distinct in $\PP V$, then
\[\brank_\kappa(A_\kappa) = 2 < \rank_\kappa(A_\kappa).\]

\item If  $v_1 \wedge \cdots \wedge v_k \wedge w_1 \wedge \cdots \wedge w_k \ne 0$, then
\[\brank_\psi(A_\psi) = 2 < \rank_\psi(A_\psi).\]

\item If $[v_1], \dots, [v_m], [w_1], \dots, [w_m]$ are distinct in $\PP V$, then
\[\brank_{\sigma\nu} (A_{\sigma\nu}) = 2 < \rank_{\sigma\nu} (A_{\sigma\nu}).\]

\item If $[v_{1,1}], \dots, [v_{d_m,m}], [w_{1,1}], \dots, [w_{d_m,m}]$ are distinct in $\PP V$, then
\[\brank_{\sigma\kappa}(A_{\sigma\kappa}) =2 < \rank_{\sigma\kappa}(A_{\sigma\kappa}).\]

\item If $v_{1,i} \wedge \cdots \wedge v_{k_i,i} \wedge w_{1,i} \wedge \cdots \wedge w_{k_i,i} \ne 0$ for $i = 1, \dots, m$, then
\[\brank_{\sigma\psi}(A_{\sigma\psi}) = 2 < \rank_{\sigma\psi}(A_{\sigma\psi}).\]

\item If $[v_1], \dots, [v_d], [w_1], \dots, [w_d]$ are distinct in  $\PP V$, then
\[\brank_{\nu\kappa}(A_{\nu\kappa}) = 2 < \rank_{\nu\kappa}(A_{\nu\kappa}).\]

\item If  $v_1 \wedge \cdots \wedge v_k \wedge w_1 \wedge \cdots \wedge w_k \ne 0$, then
\[\brank_{\nu\psi} (A_{\nu\psi}) = 2 < \rank_{\nu\psi} (A_{\nu\psi}).\]

\item If $[v_{1,1}], \dots, [v_{d_m,m}], [w_{1,1}], \dots, [w_{d_m,m}]$ are distinct in  $\PP V$, then \[\brank_{\sigma\nu\kappa}(A_{\sigma\nu\kappa}) = 2 < \rank_{\sigma\nu\kappa}(A_{\sigma\nu\kappa}).\]

\item If $v_{1,i} \wedge \cdots \wedge v_{k_i,i} \wedge w_{1,i} \wedge \cdots \wedge w_{k_i,i} \ne 0$  for $i = 1, \dots, m$, then
\[\brank_{\sigma\nu\psi}(A_{\sigma\nu\psi} ) = 2 < \rank_{\sigma\nu\psi}(A_{\sigma\nu\psi} ).\]
\end{enumerate}
\end{proposition}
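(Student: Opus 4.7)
The plan is a uniform three-step template applied to each of the eleven cases: (a) show $\brank_X(A) \le 2$, (b) show $A \notin \widehat{X}$ so that $\brank_X(A) = 2$, and (c) show $\rank_X(A) > 2$.

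For step (a), I would reuse the explicit analytic curve already constructed in the proof of Proposition~\ref{prop:normal}: for each $X$ there is $\gamma \colon \CC \to \widehat{X}$ with $\gamma(0) \ne 0$ and $\gamma'(0) = A$. Since $\widehat{X}$ is a cone, both $\tfrac{1}{t}\gamma(t)$ and $-\tfrac{1}{t}\gamma(0)$ lie in $\widehat{X}$ for $t \ne 0$, so
\[
A = \lim_{t \to 0}\Bigl(\tfrac{1}{t}\gamma(t) + \bigl(-\tfrac{1}{t}\gamma(0)\bigr)\Bigr) \in \Sigma_2(X),
\]
giving $\brank_X(A) \le 2$ uniformly. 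Step (b) asks for a structural invariant that separates $A$ from a rank-one element: multilinear rank in the Segre/Segre--Veronese cases, symmetric catalecticant rank in the Veronese cases, Pl\"ucker support in the Grassmann cases, and combinations in the hybrid varieties. For instance, in case (i) the linear independence of each $\{v_i,w_i\}$ forces every one-mode flattening of $A_\sigma$ to have rank exactly $2$, whereas a decomposable tensor has all multilinear ranks equal to $1$; the other normal forms are ruled out of $\widehat{X}$ by the analogous invariants.

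Step (c) is the crux. For the four base cases (i)--(iv), I would argue by contradiction: assume $A = x_1 + x_2$ with $x_1, x_2 \in \widehat{X}$. In case (i), restricting to $\spn\{v_1,w_1\} \otimes \cdots \otimes \spn\{v_d,w_d\}$ puts $A_\sigma$ in the form of the classical $W$-state tensor in $(\CC^2)^{\otimes d}$; contracting both sides by a linear functional dual to the first factor of $x_2$ yields a contraction of $A_\sigma$ that must be simultaneously rank $1$ and of multilinear rank $(2,\dots,2)$, which uses $d \ge 3$ to fail. Case (ii) is the symmetric analogue via a symmetric flattening; case (iii) uses apolarity, contracting by differential operators against the associated polynomial; case (iv) uses the Pl\"ucker relations together with the hypothesis $v_1 \wedge \cdots \wedge v_k \wedge w_1 \wedge \cdots \wedge w_k \ne 0$. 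Cases (v)--(xi) then reduce to base cases via canonical forgetful embeddings: e.g., forgetting the $\mathsf{S}^{d_i}$ or $\mathsf{\Lambda}^{k_i}$ structure maps a rank-one element of the hybrid variety to a rank-one Segre tensor, hence $\rank_X \ge \rank_\sigma$ on the image. The stated linear independence conditions are tailored so that the image in the appropriate base variety satisfies the corresponding base-case hypothesis, and the lower bound transfers.

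The main obstacle will be step (c) for the Chow-based cases (iii), (vi), (viii), and (x): the Chow variety is singular for $\dim V > 2$, so the multilinear-rank contradiction used for Segre is unavailable, and one must instead view $A_\kappa$ as a degree-$d$ polynomial and use apolarity to exclude a two-term product-of-linear-forms decomposition, crucially invoking the distinctness of $[v_j], [w_j]$ in $\PP V$. A secondary difficulty is the bookkeeping required in the hybrid cases (x) and (xi), where three layers of structure (Segre, Veronese or Chow, Grassmann) are composed and one must check that the provided linear independence hypotheses propagate correctly through the forgetful reductions into the base-case hypotheses.
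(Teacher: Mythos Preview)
Your three-step template matches the paper's proof exactly: the paper also gets $\brank_X(A)\le 2$ from the secant limit $A=\lim_{t\to 0}\tfrac{\gamma(t)-\gamma(0)}{t}$, uses closedness of $\widehat{X}$ to conclude $\brank_X(A)=1\iff\rank_X(A)=1$, and then asserts that the sufficient conditions force $\rank_X(A)>2$ by ``straightforward linear algebra.'' One small economy: your step~(b) is redundant once you have step~(c), since $\rank_X(A)>2$ already gives $A\notin\widehat{X}$ and hence $\brank_X(A)\ge 2$; the paper organizes it this way and you can drop the separate invariants argument. Otherwise your outline is strictly more detailed than the paper's own proof, which gives no case-by-case argument at all for $\rank_X(A)>2$.
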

\begin{proof}
Since each of these tensors in Proposition~\ref{prop:normal} is a limit of the form
\[
A = \lim_{t \to 0} \frac{\gamma(t) - \gamma(0)}{t},
\]
we must have $\brank_X (A) \le 2$. 
On the other hand, because $\widehat{X}$ is complete as a metric space, $\rank_X (A) = 1$ if and only if $\brank_X (A) = 1$. Hence requiring that $\rank_X (A) > 1$ ensures that $\brank_X (A) > 1$. That the respective sufficient condition guarantees $\rank_X (A) > 2$ for each $X$ in Section~\ref{sec:tensor} follows from straightforward linear algebra. 
\end{proof}
Note that for each $X$ in  Section~\ref{sec:tensor}, the respective sufficient condition in Proposition~\ref{prop:rank} is satisfied by a general $A \in \tau(X)$. 

The \emph{generic $X$-rank}, denoted $r_{g}(X)$, is the minimum integer $r$ such that $\Sigma_r (X) = V$. Let $r, s\in \mathbb{N}$ be such that $1 < r < s \le r_g(X)$, and let $p \in V$ with $\rank_X(p) = s$. Then $p$ does not necessarily have a best $X$-rank-$r$ approximation. While such failures are well-known for Segre variety \cite{deSilvaLim08} and Veronese variety \cite{CGLM}, we deduce from Proposition~\ref{prop:rank} that they occur for all varieties  in Section~\ref{sec:tensor} when the orders of the tensors are higher than two.

\begin{theorem}\label{thm:nonexist}
Let  $X$ be any of the projective varieties listed in  Section~\ref{sec:tensor} and let $d,k,m \ge 3$. Then  there exist a tensor $A \in \spn(X)$, i.e., $A$ has the required symmetry and/or skew-symmetry, and an $r  < r_g(X)$ such that the best $X$-rank-$r$ approximation of $A$ does not exist, i.e.,
\[
\inf_{\rank_X(B) \le r} \lVert A - B \rVert
\]
is not attained by any tensor $B \in \spn(X)$ with $\rank_X(B) \le r$.
\end{theorem}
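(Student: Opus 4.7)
The plan is to exploit Proposition~\ref{prop:rank} directly: the tensors constructed there have $X$-border rank equal to $2$ but $X$-rank strictly greater than $2$, and this gap is precisely the obstruction to the existence of a best $X$-rank-$2$ approximation. I will therefore take $r = 2$ uniformly for every variety in the list.

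The argument proceeds in three steps. First, for each variety $X$ from Section~\ref{sec:tensor}, select a tensor $A$ of the normal form given in Proposition~\ref{prop:normal} satisfying the respective genericity condition in Proposition~\ref{prop:rank}; such an $A$ lies in $\tau(X) \subseteq \Sigma_2(X)$, hence in $\spn(X)$, and has the required symmetry or skew-symmetry. By Proposition~\ref{prop:rank} we have $\brank_X(A) = 2$ and $\rank_X(A) > 2$. Second, suppose for contradiction that there exists a tensor $B \in \spn(X)$ with $\rank_X(B) \le 2$ attaining
\[
\inf_{\rank_X(B') \le 2} \lVert A - B' \rVert.
\]
Since $A \in \Sigma_2(X) = \overline{\Sigma_2^\circ(X)}$ and $\Sigma_2^\circ(X) = \{B' \in \spn(X) : \rank_X(B') \le 2\}$, the infimum above is equal to $0$. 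Thus $\lVert A - B \rVert = 0$, forcing $A = B$ and hence $\rank_X(A) \le 2$, contradicting Proposition~\ref{prop:rank}. Therefore no best $X$-rank-$2$ approximation of $A$ exists.

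Third, I need to verify $2 < r_g(X)$ for each of the varieties listed so that the chosen $r = 2$ is indeed strictly less than the generic rank. This is where the hypotheses $d, k, m \ge 3$ enter: for each variety in Section~\ref{sec:tensor}, a straightforward dimension count (or in the standard cases a reference to known generic ranks, e.g.\ the Alexander--Hirschowitz theorem for Veronese varieties and analogous results for Segre, Grassmann, and Chow varieties) shows that the $2$-secant variety $\Sigma_2(X)$ does not fill the ambient tensor space when the orders exceed $2$; in fact $\dim \Sigma_2(X) \le 2\dim X + 1$ is typically far smaller than the dimension of the ambient space of higher-order tensors. I expect the main obstacle to be bookkeeping rather than conceptual: writing out a uniform dimension estimate that simultaneously handles all eleven varieties without working each out separately. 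The cleanest way will likely be to note that each of these varieties $X$ has $\dim X$ at most a small polynomial in the relevant parameters, while the ambient tensor space has dimension growing at least exponentially in $d$, $k$, or $m$, so the inequality $2 \dim X + 1 < \dim \spn(X)$ holds comfortably once $d, k, m \ge 3$.

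Combining the three steps, for every variety $X$ in Section~\ref{sec:tensor} we have produced a tensor $A \in \spn(X)$ for which $\inf_{\rank_X(B) \le 2} \lVert A - B \rVert$ is not attained, with $r = 2 < r_g(X)$, which is exactly the conclusion of the theorem.
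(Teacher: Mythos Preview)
Your proposal is correct and follows essentially the same route as the paper: both take $r=2$, pick a general $A\in\tau(X)$ satisfying the hypotheses of Proposition~\ref{prop:rank}, and use $\brank_X(A)=2<\rank_X(A)$ to conclude that the infimum over $\Sigma_2^\circ(X)$ equals $0$ but cannot be attained. Your write-up is in fact more careful than the paper's, which leaves the verification $2<r_g(X)$ implicit; your dimension-count sketch for that step is adequate.
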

\begin{proof}
For each projective variety $X$ in Section~\ref{sec:tensor}, we need to exhibit a tensor $A \in \spn(X)$ with different $X$-rank and $X$-border rank; in which case $A$ will not have a best $X$-rank-$r$ approximation for $r = \brank_X(A)$. By Proposition~\ref{prop:rank}, we see that by setting $r = 2$ and picking a general point $A \in \tau(X)$ that satisfies the sufficient conditions in Proposition~\ref{prop:rank}, we obtain a tensor with no best $X$-rank-two approximation  for each of  the varieties $X$ in Section~\ref{sec:tensor}.
\end{proof}

While Theorem~\ref{thm:nonexist} shows that the phenomenon where a tensor fails to have a best $X$-rank-$r$ approximation can and does happen for every variety in Section~\ref{sec:tensor}, our next result is that such failures occur with probability zero, a consequence of  Theorem~\ref{thm:cmplxapp}. A high-level explanation goes as follows: When $r < r_g(X)$, the set of points in $\Sigma_r (X)$ whose $X$-rank is $r$ contains a Zariski open subset, and so the set of ``bad points'' in $\Sigma_r (X)$ --- those whose $X$-rank is not $r$ --- is contained in a subvariety; Theorem~\ref{thm:cmplxapp} then guarantees that these ``bad points'' are avoided  almost always.  A formal statement  follows next.
\begin{theorem}\label{thm:Xrkapprox}
Let $X \subseteq \PP V$ be an irreducible nondegenerate complex projective variety. Then a general point $p \in V$ has a unique best $X$-rank-$r$ approximation whenever $r < r_{g}(X)$.
\end{theorem}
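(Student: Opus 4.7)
The plan is to apply Theorem~\ref{thm:cmplxapp} (or its immediate consequence Corollary~\ref{cor:imofpoly}) to the $r$-th secant variety $\Sigma_r(X) \subseteq V$. First I would check the hypotheses. By definition, $\Sigma_r(X)$ is the Euclidean closure of $J(\widehat{X}^r)$, where $J \colon \widehat{X}^r \to V$ is the join map from Section~\ref{sec:approx}; this closure agrees with the Zariski closure because $J$ is a morphism of complex algebraic varieties. Since $X$ is irreducible, $\widehat{X}^r$ is irreducible, and hence so is $\Sigma_r(X)$. The hypothesis $r < r_g(X)$ is precisely what forces $\Sigma_r(X) \subsetneq V$, equivalently $\dim_{\CC}\Sigma_r(X) < \dim_{\CC} V$, so $\Sigma_r(X)$ is a proper closed irreducible complex algebraic subvariety of $V$.

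The key step is to identify a proper subvariety of $\Sigma_r(X)$ that contains $\Sigma_r(X) \setminus \Sigma_r^\circ(X)$. For this I would invoke Chevalley's theorem: $\Sigma_r^\circ(X) = J(\widehat{X}^r)$ is a constructible subset of $V$, so inside the irreducible variety $\Sigma_r(X)$ it contains a Zariski open dense subset. Consequently,
\[
Z \coloneqq \overline{\Sigma_r(X) \setminus \Sigma_r^\circ(X)}
\]
is a proper Zariski closed subvariety of $\Sigma_r(X)$, and by irreducibility we get $\dim Z < \dim \Sigma_r(X)$.

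Now I would finish by applying Theorem~\ref{thm:cmplxapp} to the pair $(\Sigma_r(X), Z)$: for a general $p \in V$, the unique best $\Sigma_r(X)$-approximation $\pi_{\Sigma_r(X)}(p)$ does not lie in $Z$, hence it lies in $\Sigma_r^\circ(X)$. Uniqueness follows from Lemma~\ref{lem:aeuni} applied to the closed subanalytic set $\Sigma_r(X)$. Because $\Sigma_r^\circ(X)$ is dense in $\Sigma_r(X)$, we have
\[
\inf_{q \in \Sigma_r^\circ(X)} \|p-q\| \;=\; \inf_{q \in \Sigma_r(X)} \|p-q\|,
\]
so this same $\pi_{\Sigma_r(X)}(p)$ is in fact the unique best $X$-rank-$r$ approximation of $p$. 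Alternatively, one may package the whole argument by applying Corollary~\ref{cor:imofpoly} directly to $f = J$.

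The only genuine subtlety, and thus the main obstacle, is justifying that $\Sigma_r(X) \setminus \Sigma_r^\circ(X)$ sits inside a strictly lower-dimensional subvariety. This is where Chevalley's theorem and the irreducibility of $\Sigma_r(X)$ enter crucially; once that structural observation is made, everything else is a direct deployment of the machinery assembled in Sections~\ref{sec:real} and~\ref{sec:cplx}.
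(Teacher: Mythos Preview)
Your proposal is correct and follows essentially the same line as the paper: the paper does not supply a formal proof environment for Theorem~\ref{thm:Xrkapprox}, but the paragraph immediately preceding it sketches exactly your argument --- that $\Sigma_r^\circ(X)$ contains a Zariski open subset of $\Sigma_r(X)$ (you make this step explicit via Chevalley's theorem), so the bad locus $\Sigma_r(X)\setminus\Sigma_r^\circ(X)$ sits in a proper subvariety, and Theorem~\ref{thm:cmplxapp} then applies. Your alternative packaging via Corollary~\ref{cor:imofpoly} with $f=J$ is equally valid and amounts to the same argument.
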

When applied to  the varieties in Section~\ref{sec:tensor}, we obtain the following corollary.
\begin{corollary}
Let $V$ and $V_1,\dots,V_d$ be complex vector spaces.
\begin{enumerate}[\upshape (i)]
\item Any general  tensor $A \in V_1 \otimes \dots \otimes V_d$ has a unique best rank-$r$ approximation when $r < r_{g}\bigl(\sigma(\PP V_1 \times \dots \times \PP V_d)\bigr)$, i.e.,
\[
\inf_{v_{j,i} \in V_j} \biggl\lVert  A - \sum_{i = 1}^r v_{1,i} \otimes\dots \otimes v_{d,i} \biggr\rVert 
\]
can be attained.

\item Any general  symmetric tensor $A \in \mathsf{S}^d(V)$ has a unique best symmetric rank-$r$ approximation when $r < r_{g}\bigl(\nu (\PP V)\bigr)$, and a unique best Chow rank-$r$ approximation when $r < r_{g}\bigl(\Ch_d(V)\bigr)$, i.e.,
\[
\inf_{v_i \in V} \biggl\lVert  A - \sum_{i = 1}^r v^{\otimes d}_i \biggr\rVert \qquad \text{ and } \qquad \inf_{v_{j,i} \in V} \biggl\lVert A - \sum_{i = 1}^r v_{1,i} \circ \dots \circ v_{d,i} \biggr\rVert
\]
can be attained.

\item Any general  alternating $k$-tensor $A \in \mathsf{\Lambda}^k(V)$ has a unique best alternating rank-$r$ approximation when $r < r_{g}\bigl(\Gr_d(V)\bigr)$, i.e.,
\[
\inf_{v_{j, i} \in V} \biggl\lVert A - \sum_{i = 1}^r v_{1,i} \wedge \dots \wedge v_{k,i} \biggr\rVert
\]
can be attained.

\item Any general  $A \in \mathsf{S}^{d_1}(V) \otimes \dots \otimes \mathsf{S}^{d_m}(V)$ has a unique best Segre--Veronese rank-$r$ approximation when $r < r_{g}\bigl(\sigma\nu (\PP V_1 \times \dots \times \PP V_m)\bigr)$, and a unique best Segre--Chow rank-$r$ approximation when $r < r_{g}\bigl(\sigma\kappa (\Ch_{d_1}(V) \times \dots \times \Ch_{d_m}(V))\bigr)$, i.e.,
\begin{align*}
\inf_{v_{j, i} \in V} &\biggl\lVert A - \sum_{i = 1}^r v_{1,i}^{\otimes d_1} \otimes \dots \otimes v_{m,i}^{\otimes d_m} \biggr\rVert
\shortintertext{and}
\inf_{v_{j,l,i} \in V} &\biggl\lVert A - \sum_{i = 1}^r (v_{1,1,i} \circ \dots \circ v_{d_1,1,i}) \otimes \dots \otimes (v_{1,m,i} \circ \dots \circ v_{d_m,m,i}) \biggr\rVert
\end{align*}
can be attained.

\item Any general  tensor $A \in \mathsf{\Lambda}^{k_1}(V) \otimes \dots \otimes \mathsf{\Lambda}^{k_m}(V)$ has a unique best Segre--Grass\-mann rank-$r$ approximation when
\[
r < r_{g}\bigl(\sigma\psi (\Gr_{k_1}(V) \times \dots \times \Gr_{k_m}(V))\bigr), \text{ i.e.},
\]
\[
\inf_{v_{j, l, i} \in V} \biggl\lVert A - \sum_{i = 1}^r \bigl(v_{1,1,i} \wedge \dots \wedge v_{k_1,1,i}\bigr) \otimes \dots \bigl(v_{1,m,i} \wedge \dots \wedge v_{k_m,m,i}\bigr) \biggr\rVert
\]
can be attained.

\item Any general  tensor $A \in \mathsf{S}^k\bigl(\mathsf{S}^{d}(V)\bigr)$ has a unique best Veronese--Chow rank-$r$ approximation when $r < r_{g}\bigl(\nu\kappa (\Ch_d(V))\bigr)$, i.e.,
\[
\inf_{v_{j, i} \in V} \biggl\lVert A - \sum_{i = 1}^r (v_{1, i} \circ \dots \circ v_{d, i})^{\otimes k} \biggr\rVert
\]
can be attained.

\item Any general  tensor $A \in \mathsf{S}^d\bigl(\mathsf{\Lambda}^{k}(V)\bigr)$ has a unique best Veronese--Grassmann rank-$r$ approximation when $r < r_{g}\bigl(\nu\psi (\Gr_k(V))\bigr)$, i.e.,
\[
\inf_{v_{j, i} \in V} \biggl\lVert A - \sum_{i = 1}^r \bigl(v_{1, i} \wedge \dots \wedge v_{k, i}\bigr)^{\otimes d} \biggr\rVert
\]
can be attained.

\item Any general  tensor $A \in \mathsf{S}^{k_1}\bigl(\mathsf{S}^{d_1}(V)\bigr) \otimes \dots \otimes \mathsf{S}^{k_m}\bigl(\mathsf{S}^{d_m}(V)\bigr)$ has a unique best Segre--Veronese--Chow rank-$r$ approximation when $r < r_{g}\bigl(\sigma\nu\kappa(\Ch_{d_1}(V) \times \dots \times \Ch_{d_m}(V))\bigr)$, i.e.,
\[
\inf_{v_{j, l, i} \in V} \biggl\lVert A - \sum_{i = 1}^r (v_{1,1,i} \circ \dots \circ v_{d_1,1,i})^{\otimes k_1} \otimes \dots \otimes (v_{1,m,i} \circ \dots \circ v_{d_m,m,i})^{\otimes k_m} \biggr\rVert
\]
can be attained.

\item Any general  tensor $A \in \mathsf{S}^{d_1}\bigl(\mathsf{\Lambda}^{k_1}(V)\bigr) \otimes \dots \otimes \mathsf{S}^{d_m}\bigl(\mathsf{\Lambda}^{k_m}(V)\bigr)$ has a unique best Segre--Veronese--Grassmann rank-$r$ approximation when 
\[
r < r_{g}\bigl(\sigma\nu\psi (\Gr_{k_1}(V) \times \dots \times \Gr_{k_m}(V))\bigr), \text{ i.e.},
\]
\[
\inf_{v_{j, l,i} \in V} \biggl\lVert A - \sum_{i = 1}^r (v_{1,1,i} \wedge \dots \wedge v_{k_1,1,i})^{\otimes d_1} \otimes \dots (v_{1,m,i} \wedge \dots \wedge v_{k_m,m,i})^{\otimes d_m} \biggr\rVert
\]
can be attained.
\end{enumerate}
\end{corollary}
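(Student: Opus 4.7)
The plan is to treat the nine items as a single statement: in each case we exhibit an irreducible nondegenerate complex projective variety $X\subseteq \PP W$ (for the appropriate ambient space $W$) such that the generic $X$-rank equals the generic rank named in the corollary and such that the infimum displayed in the statement is precisely $\min_{B\in \Sigma_r^{\circ}(X)}\lVert A-B\rVert$. Then the conclusion is a direct application of Theorem~\ref{thm:Xrkapprox} to $X$ with $p=A$.

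First, I would verify the two hypotheses of Theorem~\ref{thm:Xrkapprox} for each $X$ in Section~\ref{sec:tensor}. Irreducibility is immediate: each $X$ is defined as the image of an irreducible variety (a product of projective spaces, Grassmannians, or Chow varieties, all of which are irreducible) under a morphism $\sigma,\nu,\kappa,\psi$ or a composition thereof, and the image of an irreducible variety under a morphism is irreducible. Nondegeneracy amounts to showing that the affine cone $\widehat X$ spans the ambient vector space $W$, and this is a standard fact for each listed variety: rank-one tensors span $V_1\otimes\dots\otimes V_d$, $d$-th powers $v^{\otimes d}$ span $\mathsf{S}^d(V)$, decomposable products $v_1\circ\dots\circ v_d$ span $\mathsf{S}^d(V)$, decomposable $k$-vectors $v_1\wedge\dots\wedge v_k$ span $\mathsf{\Lambda}^k(V)$, and each composite variety inherits nondegeneracy from these (the span of a tensor product of spanning sets is the whole tensor product, etc.). These are all well-known and require no new argument.

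Second, I would match the $X$-rank with the concrete expression in each item. By definition, $\rank_X(A)\le r$ exactly when $A=x_1+\dots+x_r$ with $x_i\in\widehat X$, and for each variety in Section~\ref{sec:tensor} the affine cone $\widehat X$ is precisely the set of tensors appearing in the $i$-th summand of the displayed infimum; e.g.\ for $X=\sigma(\PP V_1\times\dots\times\PP V_d)$, $\widehat X$ is the set of rank-one tensors $v_1\otimes\dots\otimes v_d$, and for $X=\sigma\nu\psi(\Gr_{k_1}(V)\times\dots\times\Gr_{k_m}(V))$, $\widehat X$ consists of tensors $(v_{1,1}\wedge\dots\wedge v_{k_1,1})^{\otimes d_1}\otimes\dots\otimes(v_{1,m}\wedge\dots\wedge v_{k_m,m})^{\otimes d_m}$. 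Consequently the infimum in each item equals $\inf_{B\in\Sigma_r^{\circ}(X)}\lVert A-B\rVert$, and the generic $X$-rank is precisely the generic rank being referenced (symmetric rank, alternating rank, Chow rank, etc.).

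Finally, applying Theorem~\ref{thm:Xrkapprox} to each such $X$ yields that for a general $A$ in the ambient vector space, a unique best $X$-rank-$r$ approximation exists whenever $r<r_g(X)$; the argument of Corollary~\ref{cor:rterm} (invoking Theorem~\ref{thm:nonsing} to rule out limit points in $\Sigma_r(X)\setminus\Sigma_r^{\circ}(X)$, which form a proper subvariety since $r<r_g(X)$) then shows that this unique minimizer is actually attained by an honest $X$-rank-$r$ tensor, so the infimum is a minimum. The main obstacle is purely bookkeeping: writing the nine cases uniformly and matching notations so as to avoid repeating essentially the same one-line argument nine times. No new geometric input beyond Theorem~\ref{thm:Xrkapprox} and the classical irreducibility/nondegeneracy of the varieties listed in Section~\ref{sec:tensor} is required.
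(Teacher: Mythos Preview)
Your proposal is correct and follows essentially the same approach as the paper, which presents the corollary as an immediate application of Theorem~\ref{thm:Xrkapprox} to each variety in Section~\ref{sec:tensor} without a separate proof. Your final paragraph invoking Corollary~\ref{cor:rterm} and Theorem~\ref{thm:nonsing} is redundant, since Theorem~\ref{thm:Xrkapprox} already delivers both existence and uniqueness of the best $X$-rank-$r$ approximation directly; but this is harmless extra detail rather than an error.
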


\section{Applications to other approximation problems}\label{sec:join}

Our general existence and uniqueness result in Theorem~\ref{thm:cmplxapp} has implications beyond approximation by secant varieties in tensor spaces. In this section, we will look at some other approximation problems that arise in practical applications but do not fall naturally into the class of approximation problems considered in Section~\ref{sec:approx}

\subsection{\blue{Sparse-plus-low-rank approximations}}\label{sec:splr}

Discussions of sparsity necessarily involve a choice of bases and so in this section we will assume that bases have been chosen on our vector spaces and that $V_1 = \mathbb{C}^{n_1}, \dots, V_d = \mathbb{C}^{n_d}$.  We start by considering the linear subspace of $k$-sparse hypermatrices with a \emph{fixed} sparsity pattern in $\mathbb{C}^{n_1} \otimes \dots \otimes \mathbb{C}^{n_d} \cong \mathbb{C}^{n_1 \times \dots \times n_d}$ and leave the case where the sparsity pattern is not fixed to later. More precisely, the sparsity pattern, $\Theta \subseteq \{1,\dots,n_1\} \times \dots \times \{1,\dots,n_d\}$, is a subset of the index set and
\[
S_\Theta \coloneqq \{C \in \mathbb{C}^{n_1 \times \dots \times n_d} \mid c_{i_1\cdots i_d} = 0 \text{ if } (i_1,\dots,i_d) \notin \Theta \}.
\]
In the language of functions, this is just the vector space of complex-valued functions supported on $\Omega$.

Denote the set of hypermatrices of rank not more than $r$ by
\begin{equation}\label{eq:Rr}
R_r \coloneqq   \{B \in \mathbb{C}^{n_1 \times \dots \times n_d} \mid \rank(B) \le r \},
\end{equation}
and recall that its closure projectivizes to the $r$-secant variety of the Segre variety
\[
\PP \overline{R}_r = \Sigma_r (\sigma(\PP \mathbb{C}^{n_1} \times \dots \times \PP \mathbb{C}^{n_d})).
\]
Given a hypermatrix $A \in \mathbb{C}^{n_1 \times \dots \times n_d}$, a best $k$-sparse-plus-rank-$r$  approximation of $A$ is a solution to the problem:
\begin{equation}\label{sparseapp}
\inf_{B \in R_r, \; C \in S_\Theta} \|A - (B + C)\|.
\end{equation}
In other words, we would like study the best approximation problem whereby $A$ is approximated by a hypermatrix in $J\bigl(\PP \overline{R}_r, \PP S\bigr)$. Recall that this is a join variety, as defined in Section~\ref{sec:approx}. By Theorem~\ref{thm:cmplxapp}, we may deduce the following existence and uniqueness result for such approximation problems.
\begin{corollary}\label{cor:splr}
For a general complex hypermatrix $A \in \mathbb{C}^{n_1 \times \dots \times n_d}$, the problem \eqref{sparseapp} has a unique solution $B + C$ such that $\rank(B) = r$ and $C \in S_\Theta$ is a $k$-sparse matrix with sparsity pattern $\Theta$.
\end{corollary}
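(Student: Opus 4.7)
The plan is to recognize this statement as a direct instance of the machinery from Section~\ref{sec:cplx}. Concretely, I will set up the join variety
\[
X \coloneqq \overline{R_r + S_\Theta} \subseteq V,
\]
realized as the Euclidean closure of the image of the polynomial morphism
\[
f : (\mathbb{C}^{n_1} \times \dots \times \mathbb{C}^{n_d})^r \times S_\Theta \to V, \quad \bigl((v_{i,1}, \dots, v_{i,d})_{i=1}^r, C\bigr) \mapsto \sum_{i=1}^r v_{i,1} \otimes \dots \otimes v_{i,d} + C.
\]
Since the domain is an irreducible complex affine variety and $f$ is polynomial, $X$ is a closed irreducible complex algebraic subvariety of $V$. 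Throughout I assume that $r$ and $k = \lvert \Theta \rvert$ are small enough that $\dim X < \dim V$, since otherwise a general $A$ already lies in $X$ and the infimum is trivially attained at $A$ itself.

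The first step is to invoke Corollary~\ref{cor:imofpoly} on $f$. For a general $A \in V$, this simultaneously delivers existence and uniqueness of a best $X$-approximation $\pi_X(A)$ and guarantees that $\pi_X(A)$ lies in the constructible image $f(\mathrm{dom}) = R_r + S_\Theta$ rather than merely in its closure. Consequently one may write $\pi_X(A) = B + C$ with $B \in R_r$ (so $\rank(B) \le r$) and $C \in S_\Theta$.

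The second step upgrades the weak bound $\rank(B) \le r$ to the exact equality $\rank(B) = r$. For this I would apply Theorem~\ref{thm:cmplxapp} to the same $X$ with the bad subvariety
\[
Z \coloneqq \overline{R_{r-1} + S_\Theta} \subseteq X.
\]
Since $\overline{R_{r-1}} \subsetneq \overline{R_r}$ whenever $r$ does not exceed the generic rank of the Segre variety, a fiber-dimension estimate on the addition map gives $\dim Z < \dim X$, and Theorem~\ref{thm:cmplxapp} then forces $\pi_X(A) \notin Z$, precluding $\rank(B) < r$. Uniqueness of the sum $B + C$ as a point of $V$ is already built into the conclusion of Theorem~\ref{thm:cmplxapp}.

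The main technical point requiring care is the bookkeeping between the raw Minkowski sum $R_r + S_\Theta$ and its Euclidean closure $X$. Because $R_r$ is only constructible and not closed, the boundary of $X$ carries two distinct sources of ``bad'' points: border-rank-but-not-rank tensors in $\overline{R_r} \setminus R_r$, and lower-rank tensors in $\overline{R_{r-1}}$. The former is absorbed automatically by Corollary~\ref{cor:imofpoly}, while the latter must be excluded by the second invocation of Theorem~\ref{thm:cmplxapp}; both arguments rest on the general-point-avoidance conclusion of Section~\ref{sec:cplx}, which is a hallmark of working over $\mathbb{C}$ rather than $\mathbb{R}$.
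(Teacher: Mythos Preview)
Your proposal is correct and follows essentially the same approach as the paper: the paper states this corollary without proof, simply as a direct consequence of Theorem~\ref{thm:cmplxapp} applied to the join variety $J(\PP \overline{R}_r, \PP S_\Theta)$. Your two-step invocation of Corollary~\ref{cor:imofpoly} (to land in $R_r + S_\Theta$ rather than its closure) followed by Theorem~\ref{thm:cmplxapp} with $Z = \overline{R_{r-1} + S_\Theta}$ (to force exact rank $r$) is a faithful and slightly more careful fleshing-out of that same one-line citation.
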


Such scenarios where one requires a fixed sparsity arise in hypermatrix completion problems like the one in Section~\ref{sec:compl} and in statistical models like the one in Section~\ref{sec:factor}. However, the better known  $k$-sparse-plus-rank-$r$  approximation problem as stated in \cite{CLMW11:jacm, CSPW11:siamo} does not impose a fixed sparsity pattern on $C$ but allows it be any $k$-sparse matrix. In other words, we consider the set of $k$-sparse matrices
\[
S_k \coloneqq \bigcup_{\# \Theta = k} S_\Theta =\{C \in \mathbb{C}^{n_1 \times \dots \times n_d} \mid \lVert C \rVert_0 \le k \},
\]
where $\lVert C \rVert_0 $ counts the number of nonzero entries in $C$.
Note that such a union will no longer be a subspace of $\mathbb{C}^{n_1 \times \dots \times n_d}$. In this case, we consider the \emph{reducible} variety
\[
\bigcup_{\#\Theta =k } J\bigl(\Sigma_r (\sigma(\PP \overline{R}_r), \PP S_\Theta\bigr)
\]
and apply Corollary~\ref{cor:imofpoly} to  obtain our required existence and uniqueness result.
\begin{corollary}\label{cor:splr2}
For a general complex hypermatrix $A \in \mathbb{C}^{n_1 \times \dots \times n_d}$, the problem:
\[
\inf_{\rank(B) \le r, \; \lVert C \rVert_0 \le k } \|A - (B + C)\|
\]
has a unique solution $B + C$ such that $\rank(B) = r$ and $\lVert C\rVert_0 = k$.
\end{corollary}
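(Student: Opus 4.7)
The plan is to model the feasible set as the image of a morphism of complex affine varieties and then combine Corollary~\ref{cor:imofpoly} (to force a decomposition of the best approximation to exist) with Corollary~\ref{cor:cmplxappred} (to rule out degenerate decompositions of strictly smaller rank or sparsity). First I would set
\[
S_k \coloneqq \bigcup_{\#\Theta = k} S_\Theta,
\]
a reducible affine algebraic variety, namely the finite union of the coordinate subspaces indexed by sparsity patterns of size $k$. Together with the affine variety $R_r$ from \eqref{eq:Rr}, consider the addition morphism
\[
f \colon R_r \times S_k \to \mathbb{C}^{n_1 \times \dots \times n_d}, \qquad (B,C) \mapsto B + C.
\]
Corollary~\ref{cor:imofpoly} applied to $f$ yields, for a general $A$, a unique best approximation in $\overline{f(R_r \times S_k)}$ that lies in $f(R_r \times S_k)$. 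This already produces a decomposition $A^\ast = B + C$ with $B \in R_r$ and $C \in S_k$, hence $\rank(B) \le r$ and $\|C\|_0 \le k$.

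Next I would upgrade these inequalities to equalities. Let $X \coloneqq \overline{f(R_r \times S_k)}$ and observe that its irreducible components take the form $X_\Theta = \overline{R_r + S_\Theta}$ with $\#\Theta = k$ (each is irreducible as the image of the irreducible variety $R_r \times S_\Theta$). Define the ``bad'' subvariety
\[
Z \coloneqq \overline{f(R_{r-1} \times S_k)} \cup \overline{f(R_r \times S_{k-1})} \subseteq X,
\]
which collects those sums admitting only decompositions of strictly smaller rank or strictly fewer nonzeros. A dimension comparison
\[
\dim \overline{R_{r-1} + S_\Theta} \le \dim R_{r-1} + k < \dim R_r + k = \dim X_\Theta,
\]
together with an analogous bound for $\overline{R_r + S_{\Theta'}}$ with $\#\Theta' < k$, shows that $Z \cap X_\Theta$ is a proper subvariety of each $X_\Theta$. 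Corollary~\ref{cor:cmplxappred} then guarantees that the unique best $X$-approximation of a general $A$ avoids $Z$, so it admits a decomposition with $\rank(B) = r$ and $\|C\|_0 = k$; uniqueness of the sum $B + C$ is already part of the conclusion of Corollary~\ref{cor:cmplxappred}.

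The main technical obstacle I anticipate is rigorously justifying the dimension equality $\dim X_\Theta = \dim R_r + k$. This reduces to checking that the generic fiber of $f|_{R_r \times S_\Theta}$ is zero-dimensional, or equivalently that a generic rank-$r$ summand and a generic element of $S_\Theta$ essentially determine one another after summation. Away from degenerate regimes where $r$ and $k$ are close to the ambient dimension this is routine, but a uniform statement over the finitely many sparsity patterns $\Theta$ would need to be written down carefully; once this dimension comparison is in place, the remainder of the argument is bookkeeping on top of Corollaries~\ref{cor:imofpoly} and~\ref{cor:cmplxappred}.
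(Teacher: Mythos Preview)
Your overall strategy matches the paper's: the paper's entire proof is a single sentence instructing the reader to consider the reducible variety $\bigcup_{\#\Theta = k} J(\PP\overline{R}_r,\PP S_\Theta)$ and invoke Corollary~\ref{cor:imofpoly}. So for the existence-and-uniqueness part you are doing exactly what the paper does.

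Where you go beyond the paper is in taking the equalities $\rank(B)=r$ and $\lVert C\rVert_0=k$ seriously. The paper's one-line proof does not actually justify them; Corollary~\ref{cor:imofpoly} by itself only yields a decomposition with $\rank(B)\le r$ and $\lVert C\rVert_0\le k$. Your additional step through Corollary~\ref{cor:cmplxappred} with the ``bad'' subvariety $Z$ is a genuine improvement in rigor, and the obstacle you flag (the dimension identity $\dim X_\Theta=\dim R_r+k$, equivalently finite generic fibers of $(B,C)\mapsto B+C$) is real: it fails precisely in the degenerate regime where $\overline{R}_r+S_\Theta$ already fills the ambient space, in which case the equalities themselves need not hold.

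One small correction: you refer to $R_r$ as an ``affine variety,'' but for $d\ge 3$ it is not closed, only constructible. To stay within the hypotheses of Corollary~\ref{cor:imofpoly} you should take as domain the affine space $(\mathbb{C}^{n_1}\times\dots\times\mathbb{C}^{n_d})^r\times S_k$ with the polynomial map $((v_{j,i}),C)\mapsto \sum_i v_{1,i}\otimes\dots\otimes v_{d,i}+C$; its image is exactly your feasible set. This is a trivial fix and does not affect the logic.
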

Note that in both Corollaries~\ref{cor:splr} and \ref{cor:splr2}, the uniqueness applies to the solution $B +C$, i.e., not to $B$ and $C$ individually but to their sum.

\subsection{\blue{Hypermatrix completion}}\label{sec:compl}

The discussions in this section again depend on a choice of bases and we make the same assumptions as in 
Section~\ref{sec:splr}. Suppose we are given the values of the entries
\[
\{  a_{i_1\cdots i_d} \mid (i_1,\dots,i_d) \in \Theta \}
\]
of a hypermatrix $A \in \mathbb{C}^{n_1 \times \dots \times n_d}$ with $\Theta \subseteq \{1,\dots,n_1\} \times \dots \times \{1,\dots,n_d\}$ a subset of the indices. Let
\[
P_\Theta : \mathbb{C}^{n_1 \times \dots \times n_d} \to \mathbb{C}^{n_1 \times \dots \times n_d}, \quad 
A \mapsto \sum_{(i_1,\dots,i_d) \in \Theta} a_{i_1\cdots i_d} e_{i_1} \otimes \dots \otimes e_{i_d},
\]
be the projection onto the subspace defined by  coordinates of the known entries.
The problem of finding $B  \in \mathbb{C}^{n_1 \times \dots \times n_d}$ of rank at most $r$ that attains
\begin{equation}\label{eq:compl}
\inf_{\rank(B)\le r} \|P_\Theta(A - B)\|
\end{equation}
is called a best rank-$r$ \emph{hypermatrix completion problem}.\footnote{Often called ``tensor completion'' but this is a misnomer since the problem clearly depends on coordinates.} In this situation our result implies the following. 
\begin{corollary}\label{cor:matcomp}
Let $\Theta \subseteq \{1,\dots,n_1\} \times \dots \times \{1,\dots,n_d\}$.
If the given entries $\{  a_{i_1\cdots i_d} \mid (i_1,\dots,i_d) \in \Theta \}$ of a hypermatrix $A \in \mathbb{C}^{n_1 \times \dots \times n_d}$ are general, then there exists a hypermatrix $B\in \mathbb{C}^{n_1 \times \dots \times n_d}$ of rank $r$ that solves \eqref{eq:compl}. Furthermore, although $B$ may be not unique, its entries $\{  b_{i_1\cdots i_d} \mid (i_1,\dots,i_d) \in \Theta \}$ are unique.
\end{corollary}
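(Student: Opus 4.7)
The plan is to recast the problem as a nearest-point problem inside the coordinate subspace cut out by $\Theta$ and then apply Corollary~\ref{cor:imofpoly}. Since $P_\Theta$ is a linear projection,
\[
\inf_{\rank(B)\le r}\lVert P_\Theta(A-B)\rVert = \inf_{y\in P_\Theta(R_r)}\lVert P_\Theta(A) - y\rVert.
\]
The set $R_r$ from \eqref{eq:Rr} is only constructible, not a variety, but it is the image of the polynomial join map
\[
J\colon X \coloneqq (\mathbb{C}^{n_1}\times\cdots\times\mathbb{C}^{n_d})^r \to \mathbb{C}^{n_1\times\cdots\times n_d},\quad (v_{1,i},\dots,v_{d,i})_{i=1}^r \mapsto \sum_{i=1}^r v_{1,i}\otimes\cdots\otimes v_{d,i},
\]
from the irreducible affine variety $X$. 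Composing with $P_\Theta$ gives a morphism $f\coloneqq P_\Theta\circ J\colon X\to \operatorname{Im}(P_\Theta)$ whose image equals $P_\Theta(R_r)$.

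Next I would apply Corollary~\ref{cor:imofpoly} to $f$: for a general $p$ in $\operatorname{Im}(P_\Theta)\cong\mathbb{C}^{|\Theta|}$, the unique best $\overline{P_\Theta(R_r)}$-approximation of $p$ already lies in $P_\Theta(R_r)$. The hypothesis of the corollary, that the given entries of $A$ are general, translates directly into $P_\Theta(A)$ being a general point of $\operatorname{Im}(P_\Theta)$, so there exists $B\in R_r$ with $P_\Theta(B) = \pi_{\overline{P_\Theta(R_r)}}(P_\Theta(A))$. Because the infimum over $\overline{P_\Theta(R_r)}$ is always attained (coercivity of the squared-distance function to a closed set) and equals the infimum over $P_\Theta(R_r)$, this $B$ solves \eqref{eq:compl}. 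The uniqueness of $P_\Theta(B)$, i.e., of the entries $\{b_{i_1\cdots i_d}\mid (i_1,\dots,i_d)\in\Theta\}$, then follows from Lemma~\ref{lem:aeuni} applied to the closed complex algebraic set $\overline{P_\Theta(R_r)}$, which ensures that the best-approximation map is single-valued at a general point. Entries of $B$ indexed outside $\Theta$ do not appear in the cost functional and so are genuinely underdetermined, explaining the nonuniqueness of $B$ itself.

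The main obstacle I anticipate is choosing the correct source for Corollary~\ref{cor:imofpoly}: it requires a bona fide affine algebraic variety, so applying it to $R_r$ directly is unavailable, while applying it to the secant variety $\overline{R}_r$ would be problematic because $\overline{R}_r$ contains points of border rank $\le r$ but ordinary rank strictly greater than $r$, which violate the constraint in \eqref{eq:compl}. Pulling back to the parameter space $X$ and using the polynomial join map $J$ sidesteps both issues at once, since $J(X)=R_r$ on the nose and $X$ is an honest affine variety. A minor secondary point to verify is that the notion of ``general entries of $A$'' in the statement really is the same as ``$P_\Theta(A)$ general in $\operatorname{Im}(P_\Theta)$,'' which is immediate from the definition of $P_\Theta$.
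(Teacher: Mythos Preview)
Your proof is correct and follows essentially the same approach as the paper: rewrite \eqref{eq:compl} as the problem of approximating $P_\Theta(A)$ by an element of $P_\Theta(R_r)$, then invoke the main approximation result to conclude that for general data the nearest point in $\overline{P_\Theta(R_r)}$ already lies in $P_\Theta(R_r)$. The only cosmetic difference is that the paper applies Theorem~\ref{thm:cmplxapp} directly with $X=\overline{P_\Theta(R_r)}$ and $Z=X\setminus P_\Theta(R_r)$, while you route through Corollary~\ref{cor:imofpoly} via the explicit parametrizing morphism $P_\Theta\circ J$; your choice is arguably cleaner since it makes the Chevalley step (that the image is constructible and its boundary lower-dimensional) explicit rather than implicit.
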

\begin{proof}
Consider the image of rank-$r$ hypermatrices  $P_\Theta(R_r)$ with $R_r$ as in \eqref{eq:Rr}.  As $P_\Theta(A-B)=P_\Theta(A)-P_\Theta(B)$, a best rank-$r$ hypermatrix completion problem is equivalent to approximating $P_\Theta(A)$ by an element of $P_\Theta(R_r)$. Applying Theorem~\ref{thm:cmplxapp} with $X$ as the closure of $P_\Theta(R_r)$ and $Z = X \setminus P_\Theta(R_r)$, we obtain a unique solution $P_\Theta(B) \in P_\Theta(R_r)$, i.e.,  there exists $B$ of rank $r$ that is a best rank-$r$ completion of $A$. Note that $B$ is not unique but two different $B$'s must have the same projection $P_\Theta(B)$, i.e., the entries $\{  b_{i_1\cdots i_d} \mid (i_1,\dots,i_d) \in \Theta \}$ are unique.
\end{proof}

The similarity of the best rank-$r$ hypermatrix completion problem  \eqref{eq:compl}  and the best $k$-sparse-plus-rank-$r$  approximation problem \eqref{sparseapp} in Section~\ref{sec:splr} should not go unnoticed. For a given index set $\Theta$, the solution $B$ to \eqref{eq:compl} with $\Theta$ as the indices of known entries equals the $B$ in  \eqref{sparseapp}  with $\Theta$ as the sparsity pattern and $C \in \ker(P_\Theta)$.

\subsection{Gaussian $r$-factor analysis model with $k$ observed variables}\label{sec:factor}

Consider a Gaussian hidden variable model with $k$ observed variables $X_1,\dots,X_k$ and $r$ hidden variables $Y_1,\dots,Y_r$ where $(X_1, \dots, X_k,  Y_1, \dots, Y_r)$ follows a joint multivariate normal distribution with positive definite covariance matrix. If the observed variables are conditionally independent given the hidden variables, this model is called the \emph{Gaussian $r$-factor analysis model with $k$ observed variables} \cite{DrtonSturmfelsSullivant07}  and denoted by $\mathbf{F}_{k, r}$. In fact, by \cite[Proposition~1]{DrtonSturmfelsSullivant07}, $\mathbf{F}_{k, r}$ is the family of multivariate normal distributions $\mathcal{N} (\mu, \Sigma)$ on $\RR^k$ with $\mu \in \RR^k$ and $\Sigma$ belonging to 
\[
F_{k, r} \coloneqq \{ \Psi + LL^\tp  \in \RR^{k \times k} \mid \Psi \succ 0 \; \text{and diagonal,} \; L \in \RR^{k \times r}\}.
\]
The standard approach in \emph{algebraic statistics} \cite{DSS} and also that in \cite{DrtonSturmfelsSullivant07}  is to drop any semialgebraic conditions and complexify. In this case, it means to drop the condition $\Sigma \succ 0$ and regard all quantities as complex valued, i.e.,
\[
F_{k, r}(\mathbb{C}) = \{ \Psi + LL^\tp  \in \CC^{k \times k} \mid \Psi \; \text{diagonal,}\;  L \in \CC^{k \times r}\}.
\]
This is the set of complexified covariance matrices for the model $\mathbf{F}_{k, r}$ and  the algebraic approach undertaken in \cite{DrtonSturmfelsSullivant07} effectively treats $F_{k, r}(\mathbb{C})$ as the parameter space of the Gaussian $r$-factor analysis model with $k$ observed variables.

If we replace the space of matrices $V \otimes W$  in Section~\ref{sec:splr} by the space of symmetric matrices $\mathsf{S}^2(V)$ and let $S \subseteq \mathsf{S}^2(V)$ be the subspace of diagonal matrices, then we see that  
\[
F_{k, r}(\mathbb{C}) = J\bigl(\Sigma_r (\nu_2(\PP V)), \PP S\bigr).
\]
It follows from Corollary~\ref{cor:splr} that every general complexified covariance matrix $\Sigma \in \mathsf{S}^2(V)$  has a unique best approximation by  $\Psi + LL^\tp$, a complexified covariance matrix for the model $\mathbf{F}_{k, r}$. To provide context, readers unfamiliar with factor analysis \cite[Chapter~9]{JW} should note that its main parameter estimation problem is to determine the matrix of \emph{loadings} $L$ and the diagonal matrix of \emph{specific variances} $\Psi = \diag(\psi_1,\dots,\psi_k)$ from a sample covariance matrix $\Sigma$.

\subsection{Block-term tensor approximations}

Our final example of finding a best approximation in a join variety brings us back to tensors. A  class of tensor approximation problems with groundbreaking applications in signal processing is the so-called \emph{block-term decompositions} \cite{DeLathauwer08}. Unlike the factor analysis application in Section~\ref{sec:factor}, which is really a problem over $\mathbb{R}$ but is complexified to allow techniques of algebraic statistics, the use of block-term decompositions as a model in signal processing is naturally and necessarily over $\mathbb{C}$.

Any tensor $A \in V_1 \otimes \dots \otimes V_d$  induces a linear map $A_i \colon V_i^* \to V_1 \otimes \dots \otimes V_{i-1} \otimes V_{i+1} \otimes \dots \otimes V_d$. The rank of this linear map is of course just  the dimension of its image $\dim A_i(V^*_i)$. The \emph{multilinear rank} of $A$ is then defined to be the $d$-tuple
\begin{align*}
\mrank(A) &\coloneqq \bigl(\dim A_1(V^*_1), \dots, \dim A_d(V^*_d)\bigr)
\intertext{and the set}
\Sub_{r_1, \dots, r_d} (V_1 \otimes \dots \otimes V_d) &\coloneqq \{A \in V_1 \otimes \dots \otimes V_d \mid \mrank(A) \le (r_1,\dots,r_d)\}
\end{align*}
is a complex algebraic variety called a \emph{subspace variety}. 

Given  $(r_{1,1},\dots,r_{1,d}), \dots, (r_{k,1},\dots,r_{k,d})$, a tensor $B \in V_1 \otimes \dots \otimes V_d$ is said to have a block-term decomposition if
\[
B = B_1 + \dots + B_k,\qquad \mrank(B_i) \le (r_{i, 1}, \dots, r_{i, d}), \quad i =1,\dots,k.
\]
It is known \cite{dMGC, DeLathauwer08} that the best block-term approximation problem
\begin{equation}\label{eq:block}
\inf \bigl\{ \lVert A - (B_1 + \dots + B_k) \rVert  \bigm|  \mrank(B_i) \le (r_{i, 1}, \dots, r_{i, d}), \; i =1, \dots, k \bigr\}
\end{equation}
does not have a solution for certain choices of $A$, i.e., the infimum above cannot be attained, much like the tensor approximation problems we discussed in Section~\ref{sec:approx}; \blue{an explicit example where the infimum in \eqref{eq:block} is unattainable for a set of tensors with nonempty interior can be found in \cite{dMGC}.}

Theorem~\ref{thm:cmplxapp}, when applied to the join variety of $k$ subspace varieties, gives us the following.
\begin{corollary}
Let $V_1,\dots,V_d$ be complex vector spaces.
A general tensor $A \in V_1 \otimes \dots \otimes V_d$ has a unique best approximation in the join variety
\[
J\bigl(\Sub_{r_{1, 1}, \dots, r_{1, d}} (V_1 \otimes \dots \otimes V_d), \dots, \Sub_{r_{k, 1}, \dots, r_{k, d}} (V_1 \otimes \dots \otimes V_d)\bigr),
\]
i.e., the best block-term approximation problem in \eqref{eq:block} has a unique solution.
\end{corollary}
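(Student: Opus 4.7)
The plan is to realize the set of tensors admitting a block-term decomposition as the image of a complex affine algebraic variety under a morphism, and then invoke Corollary~\ref{cor:imofpoly}. Concretely, I would introduce the product
\[
\mathcal{X} \coloneqq \Sub_{r_{1, 1}, \dots, r_{1, d}} (V_1 \otimes \dots \otimes V_d) \times \dots \times \Sub_{r_{k, 1}, \dots, r_{k, d}} (V_1 \otimes \dots \otimes V_d)
\]
together with the summation morphism
\[
f \colon \mathcal{X} \to V_1 \otimes \dots \otimes V_d, \qquad (B_1, \dots, B_k) \mapsto B_1 + \dots + B_k.
\]

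Next I would verify the hypotheses of Corollary~\ref{cor:imofpoly}. Each factor $\Sub_{r_{i, 1}, \dots, r_{i, d}}(V_1 \otimes \dots \otimes V_d)$ is a closed complex algebraic variety, as already noted in Section~\ref{sec:join}, so $\mathcal{X}$ is a complex affine algebraic variety, and $f$, being simply a linear map, is a morphism. By construction, the constructible image $f(\mathcal{X})$ is exactly the set of tensors that admit a block-term decomposition with the prescribed multilinear-rank bounds, and its Euclidean closure $\overline{f(\mathcal{X})}$ is precisely the join variety appearing in the statement, by the definition of join variety in Section~\ref{sec:approx}.

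Applying Corollary~\ref{cor:imofpoly} to $f$ then yields the desired conclusion: for a general $A \in V_1 \otimes \dots \otimes V_d$, the unique best $\overline{f(\mathcal{X})}$-approximation $\pi_{\overline{f(\mathcal{X})}}(A)$ exists and lies in $f(\mathcal{X})$. Translated back into the language of \eqref{eq:block}, this says that the infimum is attained by a genuine block-term sum $B_1 + \dots + B_k$ with $\mrank(B_i) \le (r_{i, 1}, \dots, r_{i, d})$ for each $i$, and moreover that the value of this sum is unique (the individual summands $B_i$ are of course not, just as in Corollaries~\ref{cor:splr} and \ref{cor:splr2}). There is no real obstacle here, as all the heavy lifting has already been done in Theorem~\ref{thm:cmplxapp} and Corollary~\ref{cor:imofpoly}; the one point worth checking is the identification of $\overline{f(\mathcal{X})}$ with the join variety in the statement, which is immediate from the definition of join variety recalled in Section~\ref{sec:approx}.
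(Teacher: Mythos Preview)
Your proposal is correct and follows essentially the same approach as the paper, which simply states ``Theorem~\ref{thm:cmplxapp}, when applied to the join variety of $k$ subspace varieties, gives us the following'' without further elaboration. Your use of Corollary~\ref{cor:imofpoly} in place of a direct appeal to Theorem~\ref{thm:cmplxapp} is a minor but natural repackaging: it makes explicit the passage from the morphism $f$ on the product of (closed, irreducible, affine) subspace varieties to the constructible image and its closure, which is exactly the join variety by the definition in Section~\ref{sec:approx}.
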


\subsection{\blue{Approximations by tensor networks}}

Tensor networks are tensors or functions that have separable decompositions indexed by an undirected graph.  For example a \emph{matrix product state} $f \in L^2(\Omega_1 \times \Omega_2 \times \Omega_3)$ corresponding to a triangle takes the form
\[
f = \sum_{i,j,k=1}^{p,q,r} \varphi_{ij} \otimes \psi_{jk} \otimes \theta_{ki}
\]
where $\varphi_{ij} \in L^2(\Omega_1)$, $\psi_{jk} \in L^2(\Omega_2)$, $\theta_{ki} \in L^2(\Omega_3)$; note that the indices $i,j,k$ form a triangle with edges $\{i,j\}$, $\{j,k\}$, $\{k,i\}$ (see \cite{ye2018tensor} for more details). Other undirected graphs give other tensor networks, examples include: matrix product states (cycle graphs), tensor trains (path graphs), tree tensor network states (trees),  projected entangled pair states (product of path graphs), etc. For a precise definition, see \cite[Definition 2.1]{ye2018tensor}.

Tensor networks play a prominent role in quantum physics and quantum chemistry \cite{hackbusch2012tensor, perez2007matrix, verstraete2008matrix, schollwock2011density, orus2014practical, szalay2015tensor}. 
It is known that tensor networks $X_G$ corresponding to cyclic graphs $G$ are not closed \cite{harris2018computing, landsburg2012geometry} and thus may not have best $X_G$-approximation. 

\begin{corollary}\label{cor:net}
Let $G$ be any undirected graph with $d$ vertices and $V_1,\dots,V_d$ be complex vector spaces. If $X_G \subseteq V_1\otimes\dots\otimes V_d$ is a set of tensor network states corresponding to $G$. Then the best $\overline{X}_G$-approximation of a general tensor is unique and lies in $X$. 
\end{corollary}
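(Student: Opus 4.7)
The plan is to realize $X_G$ as the image of a polynomial map from a complex affine variety, and then invoke Corollary~\ref{cor:imofpoly} directly. Recall from \cite[Definition~2.1]{ye2018tensor} that, once bond dimensions $r_e$ are fixed on the edges $e \in E(G)$, a tensor network state $T \in V_1 \otimes \dots \otimes V_d$ corresponding to $G$ is constructed by choosing, for each vertex $v$, a local tensor
\[
T_v \in V_v \otimes \bigotimes_{e \ni v} \mathbb{C}^{r_e},
\]
and then contracting all bond indices according to the adjacency structure of $G$. Packaging the local data, I set
\[
Y \coloneqq \prod_{v} \Bigl( V_v \otimes \bigotimes_{e \ni v} \mathbb{C}^{r_e}\Bigr),
\]
which is itself a complex vector space, and I define $f \colon Y \to V_1 \otimes \dots \otimes V_d$ by sending $(T_v)_v$ to the contracted tensor. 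Because contracting each bond index is a finite sum of products of entries, $f$ is a polynomial (in fact, multilinear in each local $T_v$) morphism of affine complex algebraic varieties; and by the very definition of $X_G$ we have $f(Y) = X_G$. The matrix product state formula displayed for the triangle in the statement of the corollary is precisely this construction in miniature: $f(\varphi,\psi,\theta) = \sum_{i,j,k} \varphi_{ij} \otimes \psi_{jk} \otimes \theta_{ki}$, obtained by contracting the three bond indices $i,j,k$.

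With this presentation of $X_G$ in hand, I apply Corollary~\ref{cor:imofpoly} to the morphism $f$. It yields at once that for a general $p \in V_1 \otimes \dots \otimes V_d$ the unique best $\overline{f(Y)}$-approximation $\pi_{\overline{X}_G}(p)$ exists and lies in $f(Y) = X_G$, which is exactly the conclusion of Corollary~\ref{cor:net}. Note that no irreducibility subtlety arises: since $Y$ is a vector space it is irreducible, hence so are $X_G$ and $\overline{X}_G$, and the reducible-case machinery of Corollary~\ref{cor:cmplxappred} is not needed. The only step that required any thought is the translation between the combinatorial description of tensor network states and their presentation as the image of a polynomial map, and this is supplied verbatim by the contraction definition in \cite{ye2018tensor}; I expect no real obstacle beyond correctly unpacking that parametrization.
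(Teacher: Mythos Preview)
Your argument is correct. You parametrize $X_G$ as the image of the polynomial contraction map $f\colon Y\to V_1\otimes\cdots\otimes V_d$ and then invoke Corollary~\ref{cor:imofpoly}; this is sound, and your observation that $Y$ is a vector space (hence irreducible) is exactly what makes the application clean.

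The paper's proof takes a slightly different route: it applies Theorem~\ref{thm:cmplxapp} directly, reducing to the claim that $X_G$ contains a Zariski open subset of $\overline{X}_G$, and then cites \cite[Proposition~2.6]{ye2018tensor} for that fact. Your approach and the paper's are equivalent at the level of underlying content---both rest on $X_G$ being the image of a polynomial map, so that Chevalley's theorem forces $\overline{X}_G\setminus X_G$ into a proper subvariety---but they differ in packaging. You route through Corollary~\ref{cor:imofpoly}, which already internalizes the Chevalley step, making the argument fully self-contained within this paper; the paper instead outsources the constructibility statement to \cite{ye2018tensor}. Your version has the modest advantage of not requiring the reader to locate the external proposition, at the cost of spelling out the parametrization that the paper leaves implicit.
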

\begin{proof}
By Theorem~\ref{thm:cmplxapp} it is enough to show that $X_G$ contains a Zariski open subset of $\overline{X}_G$. For tensor networks this follows from \cite[Proposition 2.6]{ye2018tensor}.
\end{proof}
Corollary~\ref{cor:net} may be extended to other variants of tensor networks such as uniform (or site-independent) matrix product states, which are symmetric versions of matrix product states \cite{verstraete2008matrix}; the analogue of Corollary~\ref{cor:net} in this case follows from \cite[Proposition 4.2]{harris2018computing}.

\section{Conclusion}

Our study in this article demonstrates a significant difference between approximation problems over $\mathbb{R}$ and over $\mathbb{C}$. In the case when the approximation involves tensors, this revelation is  consistent with our knowledge of other aspects of tensors --- tensor rank, tensor spectral norm, tensor nuclear norm are all known to be dependent on the base field \cite{deSilvaLim08, nuclear}; even the fact that the Grothendieck constants have different values over $\mathbb{R}$ and over $\mathbb{C}$ is a manifestation of this phenomenon \cite{ZL}.

The knowledge that over $\mathbb{C}$, the best rank-$r$ approximation problems for tensors and, more generally, any best $r$-term approximation problems, are  well-posed except on a set of measure zero should provide a reasonable amount of justification for applications that depend on such approximations. In fact, such ``almost everywhere''  guarantees are about as much as one may hope for, since ``everywhere'' is certainly false --- as we saw, for every single approximation problem that we considered in this article, there are indeed instances where best approximations do not exist.

\section*{Acknowledgment}

We are grateful to J.~M.~Landsberg for posing the question about general existence of best low-rank approximations of tensors over $\mathbb{C}$ to us (when LHL visited College Station in 2010, and when he visited YQ in Grenoble in 2015). MM thanks W.~Hackbusch for inspiring discussions on the topic. Special thanks go to the anonymous referees for their careful reading and numerous invaluable suggestions that significantly enriched the content of this article.

The work of YQ and LHL is supported by DARPA D15AP00109 and NSF IIS 1546413. The work of MM is supported by the Polish National Science Center grant 2013/08/A/ST1/00804. In addition LHL acknowledges support from a DARPA Director's Fellowship and the Eckhardt Faculty Fund.

\end{document}